\newtheorem{theorem}{Theorem}[section]
\theoremstyle{definition}
\newtheorem{example}[theorem]{Example}
\theoremstyle{remark}
\newtheorem{remark}[theorem]{Remark}
\theoremstyle{conjecture}
\theoremstyle{corollary}
\newtheorem{corollary}[theorem]{Corollary}
\numberwithin{equation}{section}
\begin{document}

\title[Convergence rates of LP with uniform exponentially clustered poles]{Exact convergence rates of lightning plus polynomial approximation for branch singularities with uniform exponentially clustered poles}

\author{Shuhuang Xiang}
\address{School of Mathematics and Statistics, HNP-LAMA, Central South University, Changsha, Hunan 410083, P. R. China.}
\email{xiangsh@mail.csu.edu.cn.}
\author{Yanghao Wu}
\address{School of Mathematics and Statistics, HNP-LAMA, Central South University, Changsha, Hunan 410083, P. R. China.}
\email{wyanghao96@163.com.}

\author{Shunfeng Yang}
\address{College of Science, Southwest Forestry University, Kunming, Yunnan 650224, P. R. China.}
\email{yangshunfeng@163.com.  Corresponding author.}

\subjclass[2020]{Primary 65E05, 65D15, 42A20, 41A60, 42A16, 30C10}
\keywords{lightning plus polynomial scheme, rational function, convergence rate, corner singularity, uniform exponentially clustered poles, Paley-Wiener theorem, Poisson summation formula, Runge's approximation theorem, Cauchy's integral theorem, Chebyshev point}

\date{}

\dedicatory{}

\begin{abstract}
This paper builds rigorous analysis on the root-exponential convergence for the lightning schemes via rational functions in approximating corner (branch) singularity problems with uniform exponentially clustered poles proposed by Gopal and Trefethen.
The start point is to set up the integral representations of $z^\alpha$ and $z^\alpha\log z$ in the slit disk and develop results akin
to Paley-Wiener theorem,  from which, together with the Poisson summation formula,  the root-exponential convergence of the lightning plus polynomial scheme with  an exact order for each clustered parameter is established in approximation of  prototype functions  $z^{\alpha}$
or $z^\alpha\log z$ on a sector-shaped domain, which includes $[0,1]$ as a special case. In addition, the fastest convergence rate
is confirmed based upon the best choice of the clustered parameter.
Furthermore, the optimal selection of the clustered parameter is employed in conformal mappings through solving Laplace problems on corner domains, building upon Lehman and Wasow's analysis of corner singularities and incorporating the domain decomposition method proposed by Gopal and Trefethen.
\end{abstract}

\maketitle

\section{Introduction}
\label{sec:Int}
In the study of partial differential equations in corner domains, solutions may exhibit isolated branch points at the corners \cite{Lehman1954DevelopmentsIT,Lehman1957DevelopmentOT,Wasow}. Standard techniques for solving these problems face significant challenges in achieving accurate solutions \cite{Gopal2019}. However, recent advancements have led to the development of efficient and powerful lightning schemes, particularly lightning plus polynomial schemes, which utilize rational functions to address corner singularities \cite{Brubeck2022,costa2020solvinglaplaceproblemsaaa,costa2023aaa,Gopal20192,Gopal2019,Herremans2023,Trefethen2024,Trefethen2025,TNWNM2021,Xue2024}. These methods have demonstrated root-exponential convergence through extensive numerical experiments in solving Laplace \cite{Gopal20192,Gopal2019}, Helmholtz \cite{Gopal20192}, and biharmonic equations (Stokes flow) \cite{Brubeck2022,Xue2024}.

For singularity problems, rational functions can achieve much faster convergence rates than polynomials. A fundamental result of rational approximation is due to Newman \cite{Newman1964} concerning the approximation of the absolute value function $f(x)=|x|$ by
\begin{align}\label{newman}
  r_N(x)=x\frac{p_N(x)-p_N(-x)}{p_N(x)+p_N(-x)},\quad p_N(x)=\prod_{k=0}^{N-1} (x+\xi^k),\quad \xi=\exp(-\sqrt{N})
\end{align}
for $x \in [-1,1]$,  which attains a root-exponential convergence rate \cite{XieZhou2004}
\begin{align*}
  \lim_{N\rightarrow \infty}\sqrt{N}e^{\sqrt{N}} \max_{x\in [-1,1]}|f(x)-r_N(x)| =\max_{ x\in[0,+\infty)}\frac{x}{1+e^x}= 0. 27846\ldots.
\end{align*}
This result demonstrates a substantially superior convergence rate compared to the first-order convergence with polynomial approximation $\|f-p_N^*\|_{C[-1,1]}=\mathcal{O}(N^{-1})$ \cite{Bernstein1914,Poussin1908}, where $p_N^*$ is the best approximation polynomial of degree $N$. 

More generally, Stahl \cite{Stahl2003} showed  that the best rational approximant $r^*_{N}(x)$ of degree $N$
for $|x|^\alpha$ on $[-1,1]$ satisfies
\begin{align}\label{eq:newmann2}
  \lim_{N \to \infty} e^{\pi \sqrt{\alpha N}}  \max_{x\in [-1,1]}\big||x|^\alpha-r^*_N(x)\big|
  = 4^{1+\alpha/2} \left|\sin\left(\frac{\alpha\pi}{2}\right)\right|
\end{align}
or equivalently for $x^\alpha$ on $[0,1]$ 
\begin{align}\label{eq:newmann1}
  \lim_{N \to \infty} e^{2\pi \sqrt{\alpha N}} \max_{x\in [0,1]}\big|x^\alpha-r^*_N(x)\big| = 4^{1+\alpha}|\sin(\alpha\pi)|
\end{align}
for each $\alpha>0$. A  rational function $r=p/q$, where $p$ and $q$ are polynomials, is said to be of degree $N$ if the degrees of both $p$ and $q$ do not exceed $N$, while $r$ is of type $(m,n)$ if the degree of $p$  $\le m$ and $q$ $\le n$, respectively.

In the realm of scientific computing for planar corner domains, extensive investigations into singularity boundaries defined by analytic curves intersecting at corners have been conducted by Lewy \cite{1950Lewy}, Lehman \cite{Lehman1954DevelopmentsIT,Lehman1957DevelopmentOT} and Wasow \cite{Wasow}.
The corner domain $\Omega$ may consist of either straight or curvy sides, whose
interior angles are $\varphi_1\pi, \cdots,\varphi_m\pi$ (all $\varphi_k\in(0,2)$). In the case of curvy sides, these angles are determined by the tangent rays of the sides of  $L_{k,j}$ ($j=1,2$) at the common vertex $w_k$ (see {\sc Figure} \ref{tangent_covering_domain}).

\begin{figure}[htbp]
\vspace{-.3cm}
\centerline{\includegraphics[width=11cm]{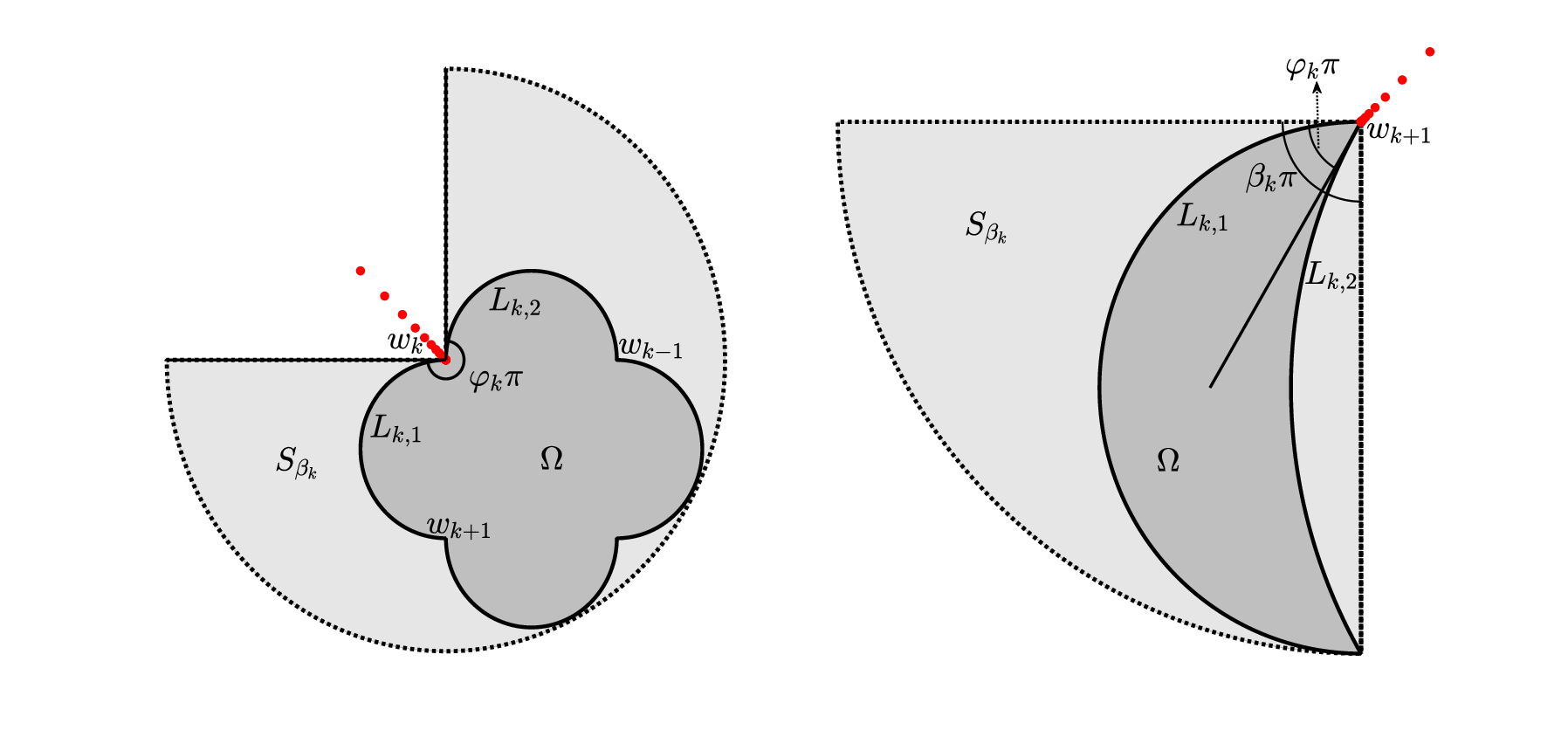}}\vspace{-.5cm}
\caption{Curvy domains with an interior angle $\varphi_k\pi$, determined by the tangent rays extending from the common vertex. Additionally, all these domains can be covered by a sufficiently large sector domain centered at the vertex, with a radius angle $\beta_k\pi$ coinciding with or larger than the interior angle $\varphi_k\pi$. The red points illustrate the distribution of the clustering poles around vertex $w_k$.}
\label{tangent_covering_domain}
\end{figure}

As is known to all,
the solution $u(x,y)$ of the Laplace equation in domain $\Omega$ is the real part of a holomorphic function $f(z)$.
According to \cite[Theorem 1]{Lehman1957DevelopmentOT} and \cite[Theorems 3, 4 and 5]{Wasow},
for piecewise analytic boundary
data exhibiting a jump in the first derivative at the corner point $w_k$, the holomorphic function $f(z)$ corresponding to $u(x,y)$
can be asymptotically represented in any finite sector around $w_k$ by a power series in terms of two variables $z-w_k$ and $(z-w_k)^{\alpha_k}$ if $\varphi_k$ is irrational, and in terms of three variables $z-w_k$, $(z-w_k)^{\alpha_k}$ and $(z-w_k)^{\mu_k}\log{(z-w_k)}$ if $\varphi_k$ is rational as $z\rightarrow w_k$, where $\alpha_k=\frac{1}{\varphi_k}$ and
$\varphi_k=\frac{\mu_k}{q_k}$, $(\mu_k,q_k)=1$ if $\varphi_k$ is rational.

The coexistence of multiple intrinsic singularities in
$f$ poses significant analytical challenges in directly quantifying the convergence rate of rational approximations over
$\Omega$.
To circumvent this obstruction, we employ the Gopal-Trefethen canonical decomposition  \cite[the proof of Theorem 2.3]{Gopal2019}, which resolves
$f(z)$ into a superposition of $2m$ Cauchy-type contour integrals
\begin{align}\label{decompose_singularity}
f(z)=\frac{1}{2\pi i}\sum_{k=1}^m\int_{\Lambda_k}\frac{f(\zeta)}{\zeta-z}\mathrm{d}\zeta
+\frac{1}{2\pi i}\sum_{k=1}^m\int_{\Gamma_k}\frac{f(\zeta)}{\zeta-z}\mathrm{d}\zeta
=:\sum_{k=1}^m f_k(z)+\sum_{k=1}^m g_k(z),
\end{align}
where  $\Lambda_k$ consists of the two sides of an exterior bisector at $w_k$, while $\Gamma_k$ links the end of the slit contour at vertex $w_k$ to the beginning of the slit contour at vertex $w_{k+1}$  (where $w_{m+1}=w_1$ by definition) (see {\sc Figure} \ref{decompose_path} for example),
each $g_k$ is holomorphic in $\mathbb{C}\setminus \Gamma_k$ containing $\Omega$, and $f_k$ is holomorphic in $\mathbb{C}\setminus \Lambda_k$.

\begin{figure}[htbp]
\vspace{-.5cm}
\centerline{\includegraphics[width=8cm]{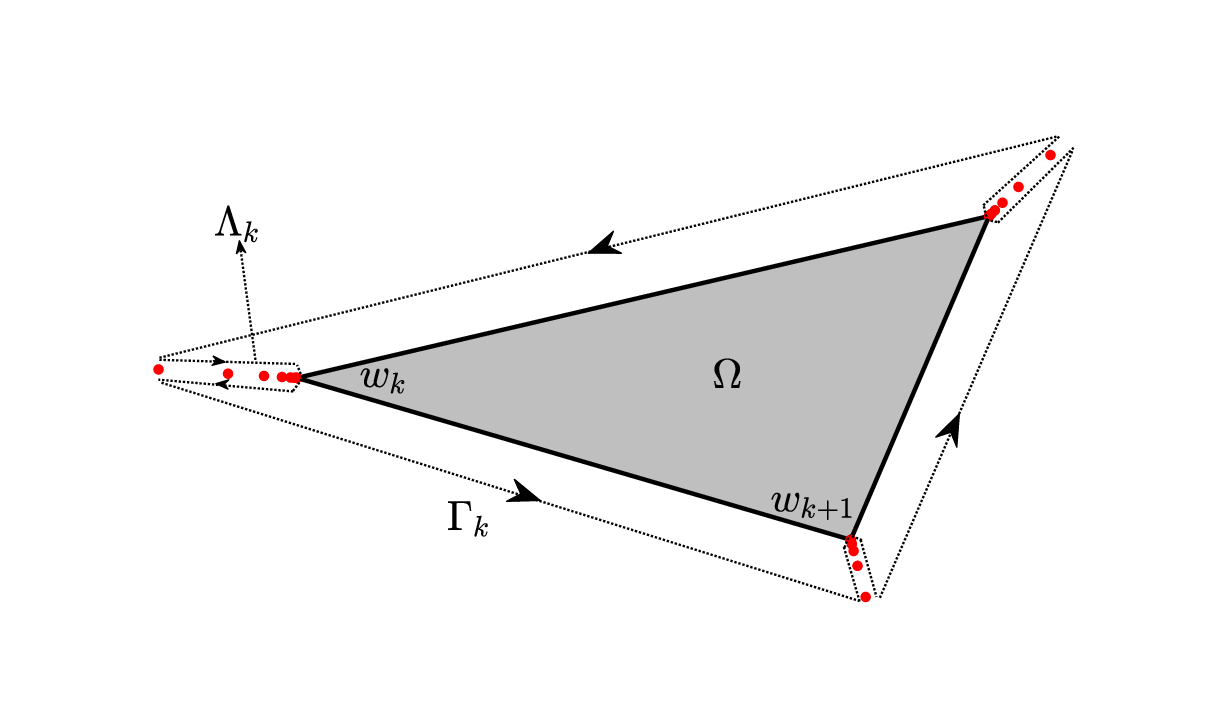}}\vspace{-.5cm}
\caption{This figure is cited from \cite[{\sc Fig. 3}]{Gopal2019}: A holomorphic function $f(z)$ defined in the corner domain $\Omega$ is decomposed as the sum of $2m$ Cauchy-type integrals: $\sum_{k=1}^m f_k(z)+\sum_{k=1}^m g_k(z)$, with $f_k(z)=\frac{1}{2\pi i}\int_{\Lambda_k}\frac{f(\zeta)}{\zeta-z}d\zeta$ along the two sides of an exterior bisector slit to each corner, and $g_k(z)=\frac{1}{2\pi i}\int_{\Gamma_k}\frac{f(\zeta)}{\zeta-z}d\zeta$ along each line segment connecting the beginnings and ends of those slit contours.}
\label{decompose_path}
\end{figure}

Subsequently, by Runge's approximation theorem \cite[pp. 76-77]{Gaier1987} and \cite[pp. 8-9]{Walsh1965}, the term $\sum_{k=1}^mg_k(z)$ can be uniformly approximated  with an exponential convergence rate by a polynomial with a lower degree on $\Omega$.
Therefore, in order to find  an efficient and highly accurate rational approximation $r_n(z)$ for  holomorphic function $f(z)$ on $\Omega$, it can be transformed into constructing a rational approximation $r_{N,k}$ for each $f_k$ on $\Omega$. The summation of these rational approximations coupled with the above low-degree polynomial constructs a new rational approximation for $f(z)$  on $\Omega$.

It is worth noting that $f$ and $f_k$ have the same singularity  in any finite sector domain around the vertex $w_k$ as $z\rightarrow w_k$ \cite{Lehman1957DevelopmentOT,Wasow}, and $f_k$ is singular only at $w_k$ on $\Omega$. In light of the intricate geometry of
$\Omega$, then in the sequel, we will consider  rational approximation for $f_k$ on an optimal sectorial region covering $\Omega$ around the vertex singularity $w_k$ similar to \cite{Gopal20192,Gopal2019,TNWNM2021}: Suppose that the boundary $\partial\Omega$ is a Jordan curve, and  $\Omega$ can be covered by some sectors centred at $w_k$ with sufficiently large radii and angles, among which the smallest one is denoted by $\mathcal{S}_{\beta_k}$, with radius angle $\beta_k\pi\ge\varphi_k\pi$.
If  the two tangent rays at  corner point $w_k$ are outside of $\Omega$, $\beta_k=\varphi_k$, and otherwise, $\beta_k>\varphi_k$ (see {\sc Figure} \ref{tangent_covering_domain} for example).

Consequently, the numerical methods for these classic boundary value problems naturally transform into how to construct effective approximation formats for rapidly approximating phototype functions
\(f(z)=z^\alpha\) and $f(z)=z^\alpha\log{z}$ in the standard sector-shaped domain $S_\beta$  (see {\sc Figure} \ref{Vsector})  with $\alpha>0$.
\begin{figure}[hpbt]
\centerline{\includegraphics[width=8cm]{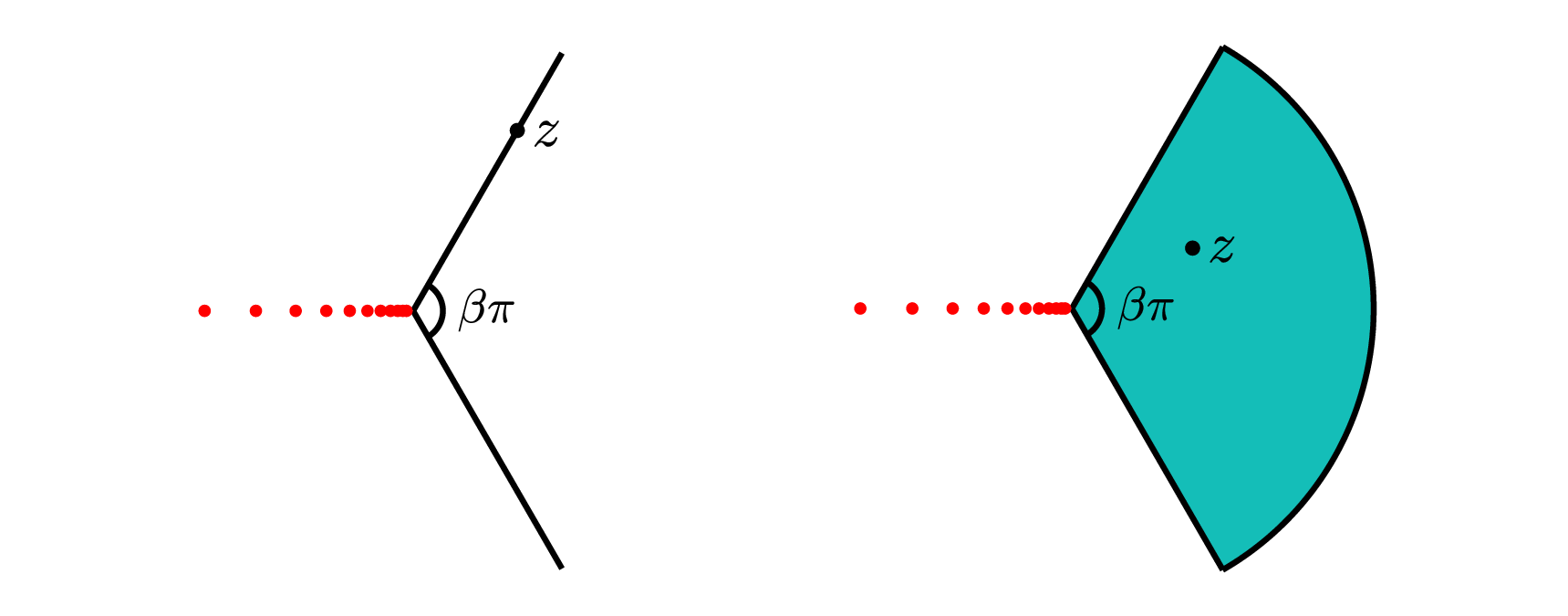}}
\caption{V-shaped domain (left):
$V_\beta=\big\{z: \, z=xe^{\pm \frac{\beta\pi}{2}i}$
with $x\in[0,1]\big\}$ and
sector domain (right):
$S_{\beta}=\big\{z: \, z=xe^{\pm \frac{\theta\pi}{2}i}$ with $x\in [0,1]$ and $\theta\in [0,\beta]\big\}$
for fixed $\beta\in [0,2)$.
The red points illustrate the distributions of the clustering poles \eqref{eq:uniform0}.}
\label{Vsector}
\end{figure}

A powerful and robust lightning plus polynomial scheme (LP) by using a rational function
\begin{equation}\label{eq:rat}
r_N(z)=\frac{p(z)}{q(z)}=\sum_{j=0}^{N_1}\frac{a_j}{z-p_j}+\sum_{j=0}^{N_2} b_jz^j:=r_{N_1}(z)+P_{N_2}(z),\, N=N_1+N_2+1
\end{equation}
to approximate  functions  with corner singularities at $z=0$ on sector domain $S_\beta$ was first introduced in Gopal and Trefethen \cite{Gopal20192,Gopal2019}
by introducing the uniform exponentially clustered poles
\begin{equation}\label{eq:uniform0}
p_j =-C\exp\big(-\sigma j/\sqrt{N_1}\big),\quad 0\leq j\leq N_1.
\end{equation}
Consequently, for a function $f(z)$ defined on $\Omega$ with isolated branch points at vertices $w_k$ ($k=1,\cdots,m$), an LP approximation of the form
\begin{align}\label{LP_cornerdomain}
r_n(z)=\sum_{k=1}^m\sum_{j=0}^{N_{1,k}}\frac{a_{k,j}}{z-p_{k,j}}+\sum_{j=0}^{N_2} b_{j}z^j
\end{align}
can provide an excellent approximation. Here, the lightning poles $\{p_{k,j}\}$ (See {\sc Figures}  \ref{tangent_covering_domain} and \ref{general_domain} for example) are uniformly exponentially clustered with parameter $\sigma_k$
  along the exterior bisector of each corner $w_k$
 in the sector domain $S_{\beta_k}$. Confer to \cite{Gopal20192,Gopal2019} for more details.

 \begin{figure}[ht!]
\centerline{\includegraphics[width=15cm]{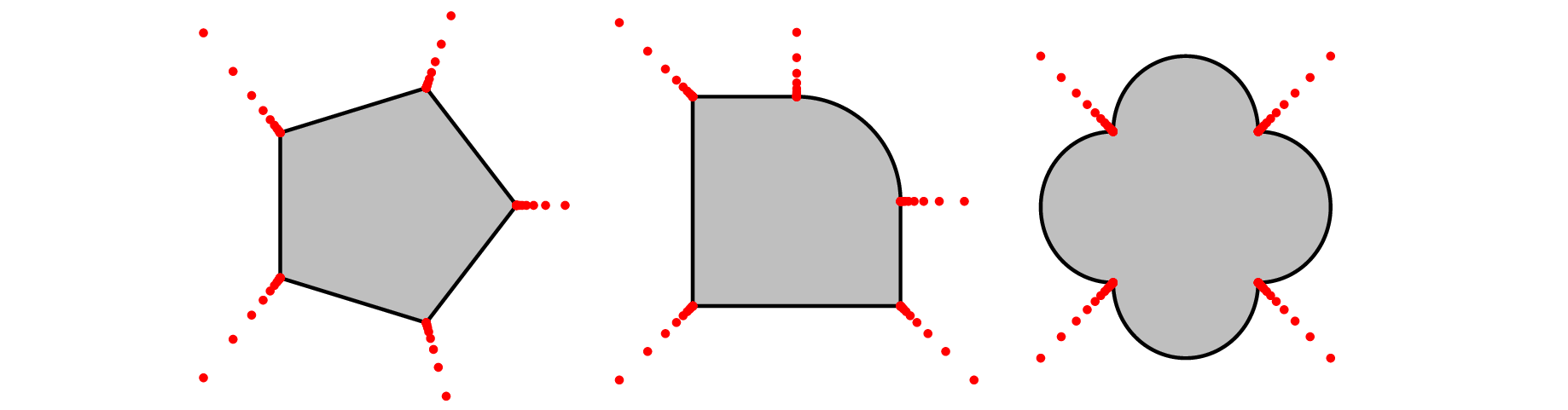}}
\caption{Various corner domains: pentagon (first), curvy pentagon (second) and quincunx-shaped (third) domains.
The red points illustrate the distributions of the clustering poles.}
\label{general_domain}
\end{figure}

To analyse the root-exponential convergence of LP \eqref{eq:rat},  Gopal and Trefethen \cite{Gopal2019} considered a rational interpolation  with poles $p_j=-C\exp\big(-\sigma j/\sqrt{N}\big),\, j=0,1,\ldots,N-1$ and interpolation nodes
\begin{align}\label{Gopalintnodes}
z_0=0,\quad z_j=-p_j,\quad j=1,2,\ldots,N-1
\end{align}and showed the root-exponential convergence \cite[Theorem 2.2]{Gopal2019} based on Walsh's Hermite integral formula \cite[Theorem 2 of Chapter 8]{Walsh1965}.

\begin{theorem}\label{interGopal} \cite[Theorem 2.2]{Gopal2019}
 Let $f$ be a bounded analytic function in the slit disk $S_\beta$ that satisfies $f(z) =\mathcal{O}(|z|^\delta)$ as $z\rightarrow 0$ for some $\delta>0$
and let $\beta\in (0,1)$ be fixed. Then for
some $\hat{\eta}\in (0, 1)$ depending on $\beta$ but not $f$, there exist type $(N-1, N)$ rational functions
$\{\check{r}_N\}_{N=1}^{\infty}$, such that
\begin{align}\label{Gopal2019}
 \|f- \check{r}_N\|_{C(\Omega)}=\mathcal{O}(e^{-C_0\sqrt{N}})
\end{align}
as $n\rightarrow \infty$ for some $C_0>0$, where $\Omega=\hat{\eta}S_\beta$.
\end{theorem}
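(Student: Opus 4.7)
The plan is to follow a Walsh--Hermite interpolation argument, producing the rational function $r_n$ as the unique type $(n-1,n)$ rational interpolant to $f$ at the nodes $z_0=0,\,z_j=-p_j$ ($j=1,\dots,n-1$) with the prescribed poles $p_j=-C\exp(-\sigma j/\sqrt{n})$. First I would invoke Walsh's Hermite integral formula to express the interpolation error in the form
\begin{equation*}
f(z)-r_n(z)=\frac{1}{2\pi i}\oint_{\Gamma}\frac{B_n(z)}{B_n(\zeta)}\cdot\frac{f(\zeta)}{\zeta-z}\,d\zeta,
\qquad
B_n(\zeta)=\prod_{j=0}^{n-1}\frac{\zeta-z_j}{\zeta-p_j},
\end{equation*}
where $\Gamma$ is a contour lying inside $S_\beta$ and enclosing the inner sector $\hat\eta S_\beta$. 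Because both the nodes $\{z_j\}$ and the poles $\{p_j\}$ accumulate exponentially at $0$, the factor $B_n$ is a finite Blaschke-type product tailored to the geometry of the singularity.

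The next step is to convert the estimate of $B_n(z)/B_n(\zeta)$ into an additive one via
\begin{equation*}
\frac{1}{n}\log|B_n(z)|=\frac{1}{n}\sum_{j=0}^{n-1}\log\Bigl|\frac{z-z_j}{z-p_j}\Bigr|,
\end{equation*}
and to recognize that under the logarithmic change of variable $w=\log(-z/C)$ the clustered points $\{z_j\}$ and $\{p_j\}$ become uniformly spaced with spacing $\sigma/\sqrt{n}$. A Riemann-sum / potential-theoretic comparison then shows that on $\hat\eta S_\beta$ the quantity $\log|B_n(z)|$ is bounded above by $-c_1\sqrt{n}$ for appropriately chosen $\sigma$ and $\hat\eta$, while on the contour $\Gamma$ (chosen to sit at a fixed logarithmic distance from the cluster) $\log|B_n(\zeta)|$ is bounded below by $-c_2\sqrt{n}$ with $c_1>c_2$. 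Equivalently, the choice of the clustering rate $\sigma/\sqrt{n}$ is precisely what equidistributes the error between the two halves of the Blaschke ratio, producing the $\sqrt{n}$ saving.

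To make the contour integral converge one must handle the branch-point singularity of $f$ at the origin. Here I would deform $\Gamma$ so that its innermost portion passes near $\zeta=0$ on a small circle (or along both sides of the slit at distance $\rho_n$ to $0$), using the hypothesis $f(\zeta)=\mathcal{O}(|\zeta|^\delta)$ to guarantee the integrand is integrable and that the near-origin contribution is $\mathcal{O}(\rho_n^\delta)$; optimizing $\rho_n=\rho_n(\sigma,n)$ against the $B_n$-bounds absorbs this endpoint error without destroying the $e^{-C_0\sqrt{n}}$ rate. Combining the three ingredients gives $\|f-r_n\|_{\hat\eta S_\beta}=\mathcal{O}(e^{-C_0\sqrt{n}})$ with a constant $C_0>0$ depending on $\sigma$ and $\hat\eta$ but not on $f$.

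The main obstacle I expect is the potential-theoretic estimate for $\log|B_n|$ on the slit sector. Standard node-distribution arguments apply cleanly on an interval, but here the nodes lie on a ray inside $S_\beta$ while the error must be controlled at complex points $z\in\hat\eta S_\beta$; the log-potential generated by the exponentially clustered nodes has to be compared with the one generated by the exponentially clustered poles across the slit, and the geometry of the sector enters through the admissible aperture $\beta<1$. Getting the two bounds $-c_1\sqrt{n}$ and $-c_2\sqrt{n}$ to satisfy $c_1>c_2$ \emph{uniformly} in $z\in\hat\eta S_\beta$ and $\zeta\in\Gamma$, while simultaneously leaving enough room to absorb the $|\zeta|^\delta$ blow-up at the vertex, is the delicate balancing act that controls both the rate $C_0$ and the permissible shrinkage factor $\hat\eta$.
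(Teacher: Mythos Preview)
Your proposal is correct and follows the same approach that the paper attributes to this result: the paper does not give its own proof of this theorem but cites it from \cite{Gopal2019}, noting only that the rational function is the interpolant at the nodes $z_0=0,\ z_j=-p_j$ with the exponentially clustered poles and that the argument is ``based on Walsh's Hermite integral formula.'' Your outline---Walsh--Hermite error representation, logarithmic potential estimates for the Blaschke-type ratio $B_n(z)/B_n(\zeta)$ after the substitution $w=\log(-z/C)$, and absorption of the vertex contribution via the $\mathcal{O}(|z|^\delta)$ hypothesis---is precisely this strategy.
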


In Theorem \ref{interGopal}, the constant $C_0$
remains unspecified. Moreover, from the numerical results illustrated in {\sc Figure} \ref{FIG2_uniformcluster}, we see that the rational interpolant $\check{r}_N$ 
exhibits a significantly slower convergence rate in the approximation of $z^\alpha$ compared to LP \eqref{eq:rat} in \cite{Gopal20192,Gopal2019,Herremans2023} with  the best parameter $\sigma=\sigma_{\mathrm{opt}}=\frac{\pi\sqrt{2-\beta}}{\sqrt{\alpha}}$ and $N_2=\mathcal{O}(\sqrt{N})$ (see  Theorem \ref{mainthm}). In addition, the assumption $\beta\in \left(0,1\right)$ in Theorem \ref{interGopal} is essential and cannot be removed (see the right of {\sc Figure} \ref{FIG2_uniformcluster}).

\begin{figure}[htbp]
\centerline{\includegraphics[width=13cm]{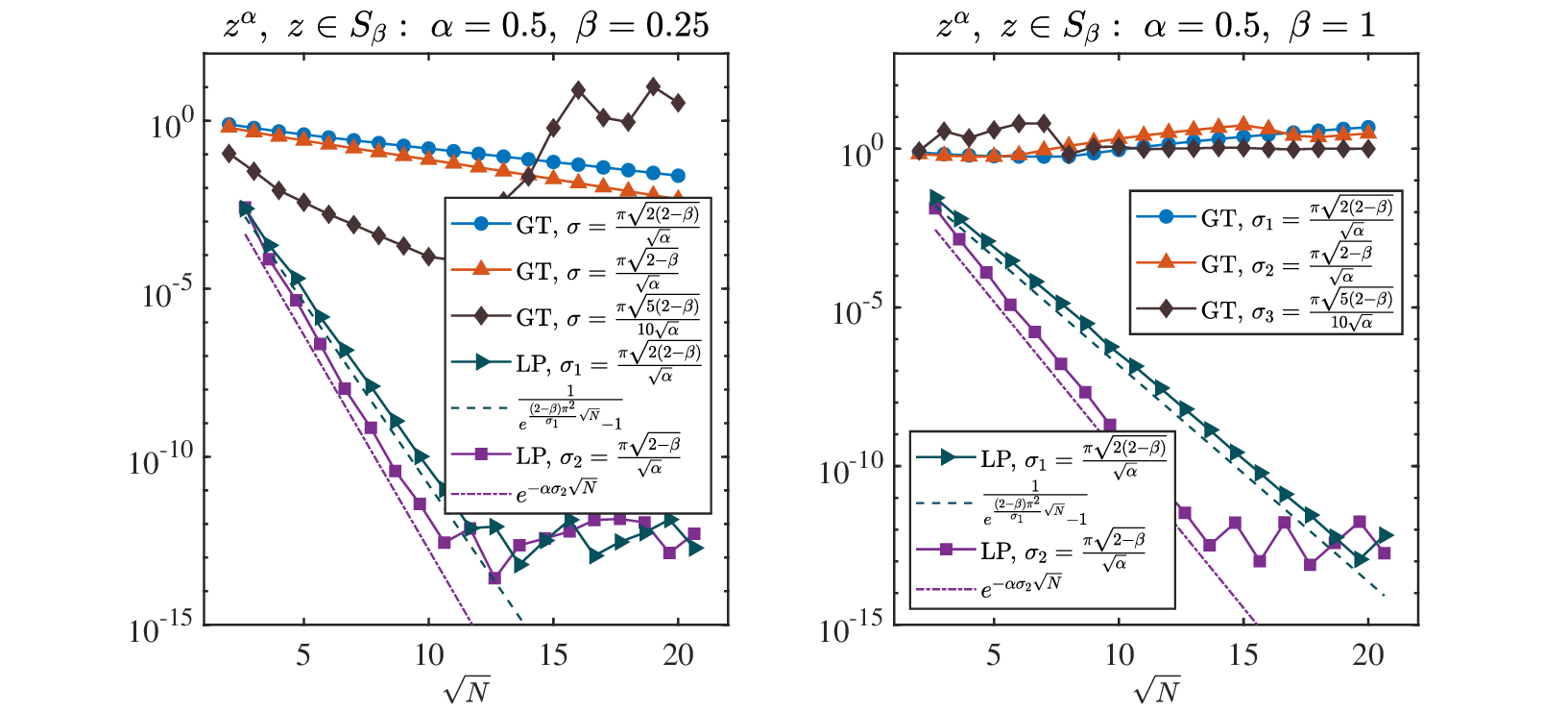}}
\caption{Decay rates of approximation errors $\|z^{\alpha}-\check{r}_N(z)\|_{C(S_{\beta})}$ of Gopal and Trefethen's interpolation (GTs) in \cite{Gopal2019} are compared with the LP \eqref{eq:rat} with $N_2={\rm ceil}(1.3\sqrt{N_1})$ on $S_{\beta}$ with various values of $\sigma$ as well as $\alpha$, $\beta$, where $N$ is the degree of rational approximation. The lightning parameter $\sigma_2=\sigma_{\mathrm{opt}}\left(=\frac{\pi\sqrt{2-\beta}}{\sqrt{\alpha}}\right)$ is the optimal choice among all of $\sigma>0$ to get the corresponding fastest convergence rate.}
\label{FIG2_uniformcluster}
\end{figure}

To explore and accelerate the convergence rate of the rational interpolation $\check{r}_N$ and overcome the restriction $\beta\in \left(0,1\right)$, Trefethen, Nakatsukasa and Weideman \cite{TNWNM2021} considered the interpolation nodes obtained from the following potential function
\begin{align}\label{PT2021}
u(z)=\frac{1}{N}\sum_{k=0}^{N}\log |z- z_k|-\frac{1}{N}\sum_{k=0}^{N-1}\log |z- p_k|,
\end{align}
which approximates the potential function
\begin{align*}
u(z)=-\int\log|z-t|\mathrm{d}\mu(t),
\end{align*}
and approximately minimizes the energy
 \begin{align*}
 I(\mu)=-\iint\log|z-t|\mathrm{d}\mu(z)\mathrm{d}\mu(t),
 \end{align*}
where $\mu$ is a signed measure and defines a continua of interpolation points and poles. See \cite{TNWNM2021} for details.
As a result, the corresponding rational interpolation approximation may also achieve a root-exponential convergence rate for specific uniform exponentially clustered poles
\begin{equation*}
p_j =-C\exp\big(-\pi j/\sqrt{N}\big),\quad 0\leq j\leq N-1,
\end{equation*}
and tapered exponential clustering of the poles
\begin{equation*}
q_j =-C\exp\left(-\sqrt{2}\pi\big(\sqrt{N}-\sqrt{j}\big)/\sqrt{\alpha}\right),\quad 1\le j\le N.
\end{equation*}
The accuracy of the lightning methods using these two exponentially clustered poles for  approximating the prototype function $x^\alpha$  with $0<\alpha<1$ on $[0,1]$  is $\mathcal{O}(e^{-\pi\sqrt{\alpha N}})$
and $\mathcal{O}(e^{-\pi\sqrt{2\alpha N}})$, respectively.

To achieve the minimax convergence rate $\mathcal{O}(e^{-2\pi\sqrt{\alpha N}})$ in \eqref{eq:newmann1}, Herremans, Huybrechs and Trefethen \cite{Herremans2023} introduced a new LP \eqref{eq:rat} with $N_2=\mathcal{O}(\sqrt{N_1})$
based upon
a new type of tapered exponential clustering
\begin{equation*}
q_j =-C\exp\left(-\sigma\big(\sqrt{N_1}-\sqrt{j}\big)\right),\quad 1\leq j\leq N_1
\end{equation*}
to approximate $x^\alpha$  and $x^\alpha\log x$ on $[0,1]$,  and $z^\alpha$ on a V-shaped domain
$$V_{\beta}=\big\{z: \, z=xe^{\pm \frac{\beta\pi}{2}i} \mbox{\, with\, $x\in [0,1]$}\big\}$$
for fixed $\beta\in [0,2)$.
Wherein, the optimal choice of
$\sigma=\frac{2\pi}{\sqrt{\alpha}}$ for $x^\alpha$ and $x^\alpha\log x$, while $\sigma=\frac{\pi\sqrt{2(2-\beta)}}{\sqrt{\alpha}}$ for $z^\alpha$ on $V_\beta$, are confirmed by ample numerical examples. In addition,  the root-exponential convergence rates $\mathcal{O}\big(e^{-2\pi\sqrt{\alpha N}})$ for $x^\alpha$ and $\mathcal{O}\big(e^{-\pi\sqrt{2(2-\beta)\alpha N}})$ for $z^\alpha$ are acquired, respectively, based on three conjectures  \cite{Herremans2023}.
However, achieving good approximation on the V-shaped region does not necessarily ensure accuracy within the entire region $\Omega$.

{\sc Figure} \ref{FIG3_uniformcluster} illustrates the performance of the new interpolant $\widehat{r}_N$ considered in \cite{TNWNM2021} and poles \eqref{eq:uniform0}.
It shows that the nodes $\{z_k\}_{k=0}^{N}\subseteq V_{\beta}$ chosen based on the potential calculated by BIEP \cite{ZX2024} improve the rational interpolation efficiently, while it is still much slower than LPs \eqref{eq:rat} with $\sigma_{\rm opt}$ and even fails on the V-shaped domain $V_{\beta}$ with a larger radius angle.

\begin{figure}[hpbt]
\centerline{\includegraphics[width=12cm]{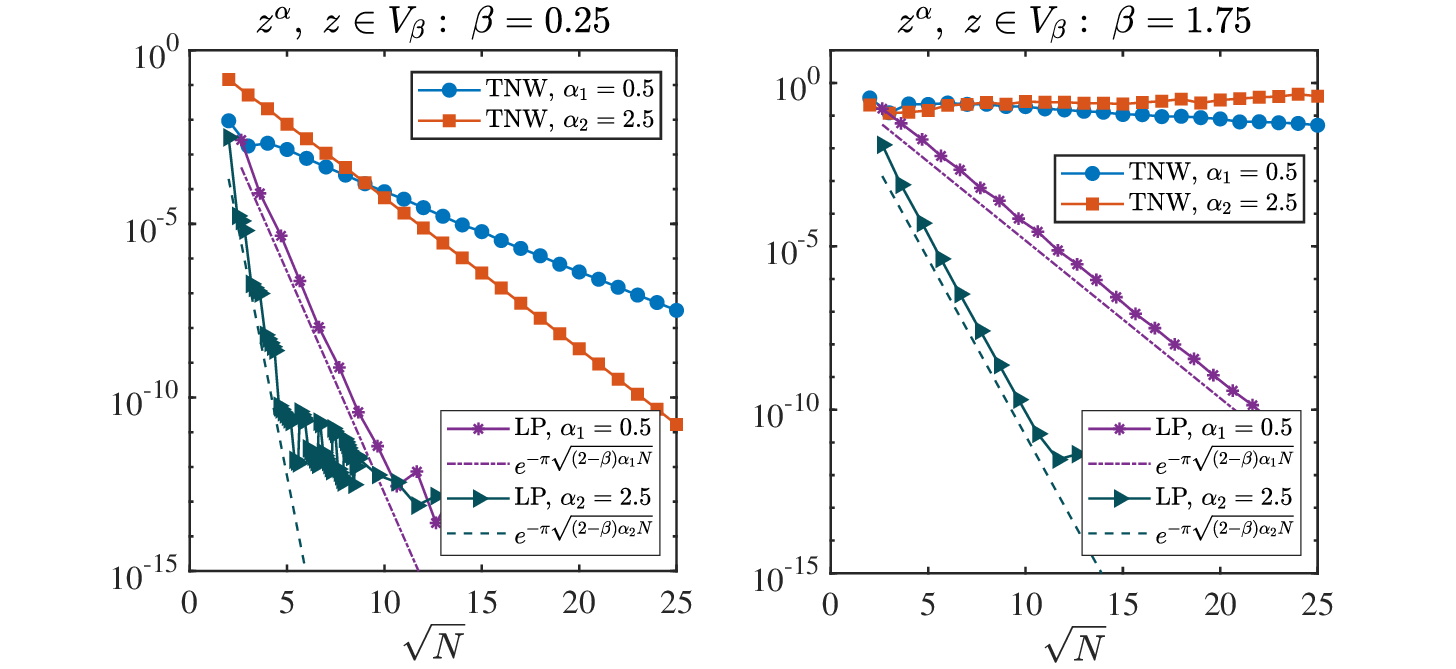}}
\caption{Decay rates of approximation errors $\|z^{\alpha}-\widehat{r}_N(z)\|_{C(V_{\beta})}$ of Trefethen, Nakatsukasa and Weideman's interpolation (TNWs) in \cite{TNWNM2021}  are compared with the LP \eqref{eq:rat} with $N_2={\rm ceil}(1.3\sqrt{N_1})$ for $z^{\alpha}$ on $S_{\beta}$ with various values of $\alpha$ and $\beta$, where $N$ is the degree of rational approximation, and we choose the optimal lightning parameter $\sigma=\sigma_{\mathrm{opt}}\left(=\frac{\sqrt{2-\beta}\pi}{\sqrt{\alpha}}\right)$ for the LPs.}
\label{FIG3_uniformcluster}
\end{figure}

From the above numerical tests, we see that LP \eqref{eq:rat} with poles \eqref{eq:uniform0} and $N_2=\mathcal{O}(\sqrt{N_1})=\mathcal{O}(\sqrt{N})$  exhibits root-exponential convergence with  an exact order in the approximation of $z^\alpha$ in $S_\beta$, and  significantly outperforms the rational interpolants in  \cite{Gopal20192,Gopal2019,TNWNM2021,ZX2024}, when using the optimal choice of $\sigma$.
However, existing theoretical frameworks fail to fully characterize the root-exponential convergence behavior of LP \eqref{eq:rat}, especially for parameter values $\beta\in [0,2)$ and when solving the Laplace equation on corner domains $\Omega$ with piecewise analytic boundary conditions.

The goal of this paper is to lay the rigorous groundwork for the lightning scheme \eqref{eq:rat} \cite{Gopal20192,Gopal2019}.  With the help of Cauchy's integral theorem and residue theorem, we firstly derive the integral representations of $z^\alpha$ and $z^\alpha\log z$.
By employing the integral representations, along with Runge's approximation theorem and Poisson summation formula \cite{Henrici}, we shall prove theoretically the root-exponential convergence rates of the LPs on  $S_\beta$ and acquire the optimal convergence rate, where the assumption $0\le \beta <1$ in Theorem \ref{interGopal} is removed.

\begin{theorem}\label{mainthm}
Let $\alpha$ and $\sigma$ be positive real numbers, $\sigma_{\rm opt}=\frac{\sqrt{2-\beta}\pi}{\sqrt{\alpha}}$ and
$\eta=\frac{\sigma_{\rm opt}}{\sigma}$. If $g(z)$ is analytic in a neighborhood of $S_\beta$, then there exist coefficients $\{\bar{a}^{(g)}_j\}_{j=0}^{N_1}$, $\{\widetilde{a}^{(g)}_j\}_{j=0}^{N_1}$ and polynomials $\bar{P}^{(g)}_{N_2}$, $\widetilde{P}^{(g)}_{N_2}$ with degree
$N_2 = \mathcal{O}(\sqrt{N_1})=\mathcal{O}(\sqrt{N})$, for which the LPs of the form \eqref{eq:rat} to $g(z)z^\alpha$ or $g(z)z^\alpha\log z$ furnished with the poles \eqref{eq:uniform0}
satisfy
\allowdisplaybreaks
\begin{align}
|\bar{r}^{(g)}_N(z)- g(z)z^\alpha|=&\frac{\mathcal{G}^{\alpha}\max\{1,C^{\alpha}\}}{\varkappa(\beta)}
\left[\frac{\mathcal{O}(1)}{e^{\frac{(2-\beta)\pi^2}{\sigma}\sqrt{N}}-1}
+\mathcal{O}(1)e^{-\alpha\sigma\sqrt{N}}\right]\notag\\
=&\frac{\mathcal{G}^{\alpha}\max\{1,C^{\alpha}\}}{\varkappa(\beta)}
\left\{\begin{array}{ll}
\frac{\mathcal{O}(1)}{e^{\sigma\alpha\sqrt{N}}},&\sigma< \sigma_{\rm opt},\\
\frac{\mathcal{O}(1)}{e^{\pi\sqrt{(2-\beta)N\alpha}}},&\sigma= \sigma_{\rm opt},\\
\frac{\mathcal{O}(1)}{e^{\pi\eta\sqrt{(2-\beta)N\alpha}}-1},&\sigma> \sigma_{\rm opt},
\end{array}\right.\label{eq: rate1}\\
|\widetilde{r}^{(g)}_N(z)- g(z)z^\alpha\log{z}|=&\frac{\mathcal{G}^{\alpha}\max\{1,C^{\alpha}\}}{(\alpha+1)^{-1}\alpha\varkappa(\beta)}
\left[\frac{\mathcal{O}(1)}{e^{\frac{(2-\beta)\pi^2}{\sigma}\sqrt{N}}-1}
+\mathcal{O}(1)\sigma\sqrt{N}e^{-\alpha\sigma\sqrt{N}}\right]\notag\\
=&\frac{\mathcal{G}^{\alpha}\max\{1,C^{\alpha}\}}{(\alpha+1)^{-1}\alpha\varkappa(\beta)}
\left\{\begin{array}{ll}
\frac{\mathcal{O}(1)\sigma\sqrt{N}}{e^{\sigma\alpha\sqrt{N}}},&\sigma< \sigma_{\rm opt},\\
\frac{\mathcal{O}(1)\sigma\sqrt{N}}{e^{\pi\sqrt{(2-\beta)N\alpha}}},&\sigma=\sigma_{\rm opt},\\
\frac{\mathcal{O}(1)}{e^{\pi\eta\sqrt{(2-\beta)N\alpha}}-1},&\sigma> \sigma_{\rm opt},
\end{array}\right.\label{eq: rate2}
\end{align}
as $N \to\infty$, uniformly for $z\in S_{\beta}$, where $\varkappa(\beta)=1$ for $0\le\beta<1$ and $\varkappa(\beta)=\sin\frac{\beta\pi}{2}$ for $1\le\beta<2$, $\mathcal{G}=\frac{\sqrt{2}+2}{\sqrt{2}-1}=8.24264068711928\cdots$, and all the constants in the above $\mathcal{O}$ terms are independent of $\alpha$, $\sigma$, $N$ and $z$. In addition, if $\alpha$ is a positive integer, the rate for $g(z)z^\alpha$ is  $\mathcal{O}(e^{-N})$ while for  $g(z)z^\alpha\log z$ the rate enjoys \eqref{eq: rate1}.

In particular, for the case $\beta=0$, that is, $f(x)=g(x)x^\alpha$ or $f(x)=g(x)x^\alpha\log{x}$ for $x\in [0,1]$,
the constant $\mathcal{G}=\frac{\sqrt{2}+2}{\sqrt{2}-1}$ in \eqref{eq: rate1} and \eqref{eq: rate2} can be improved to $\frac{\sqrt{2}}{\sqrt{2}-1}=3.41421356237309\cdots$ uniformly for $x\in[0,1]$ as $N \rightarrow \infty$.
\end{theorem}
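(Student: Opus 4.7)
The plan is to convert the lightning approximation problem into a quadrature problem whose nodes generate exactly the clustered poles \eqref{eq:uniform0}. The starting point is a Hankel/Cauchy integral representation on the slit disk: using Cauchy's theorem on a contour that wraps the negative real axis, the jump of the branch of $\zeta^\alpha$ across the cut produces an identity of the form
\begin{equation*}
z^\alpha \;=\; P_\alpha(z) \;+\; \frac{\sin(\alpha\pi)}{\pi}\, z^{k+1}\int_0^{\infty}\frac{u^{\alpha-k-1}}{u+z}\,du,
\end{equation*}
valid for $z\in S_\beta$, where $P_\alpha$ is a low-degree polynomial and $k$ is chosen so that the integral converges at $u=\infty$. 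An analogous representation for $z^\alpha\log z$ follows by differentiating in $\alpha$. Substituting $u = Ce^{-s}$ turns this into an integral of the form $\int_{-\infty}^{\infty}\Phi_\alpha(s)(z+Ce^{-s})^{-1}\,ds$ with $\Phi_\alpha$ decaying exponentially at $+\infty$; the trapezoidal rule with step $h = \sigma/\sqrt{N_1}$ and nodes $s_j = jh$ then yields a sum of simple fractions whose poles are exactly $-Ce^{-jh}=p_j$, i.e.\ a rational function of the form \eqref{eq:rat}.

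Two sources of error must then be bounded. For the quadrature error of the infinite trapezoidal sum I would invoke the Poisson summation formula, which writes this error as $\sum_{\ell\ne 0}\widehat{F}(2\pi\ell/h)$ with $F(s)=\Phi_\alpha(s)/(z+Ce^{-s})$, and then carry out a Paley--Wiener style contour shift of $F$ into the strip $|{\rm Im}\,s|<(2-\beta)\pi/2$. The maximal admissible width is set by the distance from $\mathbb{R}$ to the singularity $s=\log(-z/C)$ of $F$, and since $z\in S_\beta$ has $|\arg(-z)|\le(2-\beta)\pi/2$ this width is optimal and yields the factor $\exp(-(2-\beta)\pi^2\sqrt{N}/\sigma)$; the remaining distance from $z$ to the branch cut forces the factor $\varkappa(\beta)^{-1}$ for $\beta\in[1,2)$ into the prefactor. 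The truncation tail $j>N_1$ contributes $\mathcal{O}(e^{-\alpha\sigma\sqrt{N}})$ from the decay of $\Phi_\alpha$, while the tail $j<0$, where $Ce^{-s}$ is large, is handled by expanding $(z+Ce^{-s})^{-1} = (e^s/C)\sum_{m\ge 0}(-ze^s/C)^m$ and absorbing its first $N_2 = \mathcal{O}(\sqrt{N_1})$ terms into the polynomial part $P_{N_2}(z)$ of \eqref{eq:rat}. The analytic prefactor $g(z)$ is incorporated either by writing $g(z)z^\alpha=\sum_{j<m}g^{(j)}(0)z^{j+\alpha}/j!+z^{\alpha+m}h(z)$ with $h$ analytic and $z^{\alpha+m}h(z)$ smooth enough to be approximated by a polynomial via Runge's theorem, or by deforming the Cauchy contour so that $g$ contributes only a bounded multiplicative factor. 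The $\log$ case follows from $\partial_\alpha z^\alpha = z^\alpha\log z$ and inherits an extra $\sigma\sqrt{N}$ in the quadrature bound.

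Balancing $\exp(-(2-\beta)\pi^2\sqrt{N}/\sigma)$ against $\exp(-\alpha\sigma\sqrt{N})$ gives the optimum $\sigma_{\rm opt}=\pi\sqrt{2-\beta}/\sqrt{\alpha}$ and the rate $\mathcal{O}(\exp(-\pi\sqrt{(2-\beta)\alpha N}))$, while unbalanced $\sigma$ lets one exponential dominate, producing the three-way piecewise bounds of \eqref{eq: rate1}--\eqref{eq: rate2}. The main obstacle, in my view, is making the contour shift uniform in $z\in S_\beta$ as $|\arg z|\to\beta\pi/2$ with $\beta\in[1,2)$: the admissible strip $|{\rm Im}\,s|<(2-\beta)\pi/2$ degenerates at the boundary rays of $S_\beta$, and careful tracking of the distance from $z$ to the pole locus of $F$ is precisely what dictates the $\varkappa(\beta)$ prefactor. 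A second delicate point is $\alpha$-uniformity: extracting the constant $\mathcal{G}^\alpha\max\{1,C^\alpha\}$ without a hidden $\alpha$-dependent blowup requires choosing the auxiliary contour radius geometrically, producing $\mathcal{G}=(\sqrt{2}+2)/(\sqrt{2}-1)$ on sectors, while on the interval $[0,1]$ the contour can be chosen symmetrically about $\mathbb{R}$ and the constant improves to $\sqrt{2}/(\sqrt{2}-1)$. Finally, integer $\alpha$ kills $\sin(\alpha\pi)$, so for $g(z)z^\alpha$ the slit integral vanishes, $g(z)z^\alpha$ is entire, and Runge's theorem alone delivers the plain exponential rate $\mathcal{O}(e^{-N})$; for $g(z)z^\alpha\log z$ the $\log$ branch survives and the general bound \eqref{eq: rate1} still applies.
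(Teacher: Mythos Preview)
Your overall architecture matches the paper's closely: an integral representation on the slit plane, an exponential change of variables so that the rectangular rule produces exactly the clustered poles \eqref{eq:uniform0}, Poisson summation plus a Paley--Wiener contour shift into a strip of half-width $(2-\beta)\pi/2$ for the quadrature error, a polynomial of degree $N_2=\mathcal{O}(\sqrt{N_1})$ to absorb the far-from-origin piece, and the balance that singles out $\sigma_{\rm opt}$. All of that is correct.

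The genuine gap is your integral representation for $\alpha>1$. Writing $z^\alpha=c\,z^{k+1}\!\int_0^\infty u^{\alpha-k-1}(u+z)^{-1}\,du$ with $k=\lfloor\alpha\rfloor$ regularizes the integral at $u\to\infty$ by lowering the exponent, but it sacrifices decay at $u\to0$: after $u=Ce^{-s}$ the integrand behaves like $e^{-(\alpha-k)s}$ as $s\to+\infty$. Hence your claim that the tail $j>N_1$ contributes $\mathcal{O}(e^{-\alpha\sigma\sqrt{N}})$ is false for $\alpha>1$; it is only $\mathcal{O}(e^{-(\alpha-\lfloor\alpha\rfloor)\sigma\sqrt{N}})$, and balancing this against the quadrature term would give the wrong $\sigma_{\rm opt}$ and a strictly slower rate $e^{-\pi\sqrt{(2-\beta)(\alpha-\lfloor\alpha\rfloor)N}}$. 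The paper fixes this with a different regularization: it keeps the full power $y^{\alpha-1}$ in the integrand and instead multiplies by $\prod_{k=1}^{\ell}(z-s_k)/(y+s_k)$ with $\ell=\lfloor\alpha\rfloor$ and $s_k$ shifted Chebyshev points in $[\delta,\delta+1]$, $\delta>0$. This product supplies the missing $y^{-\ell}$ decay at $y\to\infty$ while leaving the $y^\alpha$ behaviour at $y\to0$ intact; after the substitution $y=Ce^{t/\alpha}$ (note the $1/\alpha$) the small-$y$ tail decays like $e^t$, and truncation at $t=-T=-\sigma\alpha\sqrt{N_1}$ then gives the full $e^{-\alpha\sigma\sqrt{N_1}}$. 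Your formula is effectively the degenerate case $s_k\to0$ of the paper's, and that is precisely the limit in which the decay at the origin degrades. Two secondary points: the constant $\mathcal{G}$ does not come from a ``contour radius'' but from balancing residue contributions in the Poisson estimate via the specific choice $\delta=(\sqrt{2}-1)/2$ for the Chebyshev shift; and the paper handles the large-pole piece ($|p_j|>C$) by Runge's theorem rather than your geometric expansion, which would require $|z|<|p_j|$ and is delicate for $j$ just below $0$ when $C\le1$.
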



\begin{figure}[htbp]
\centerline{\includegraphics[width=13cm]{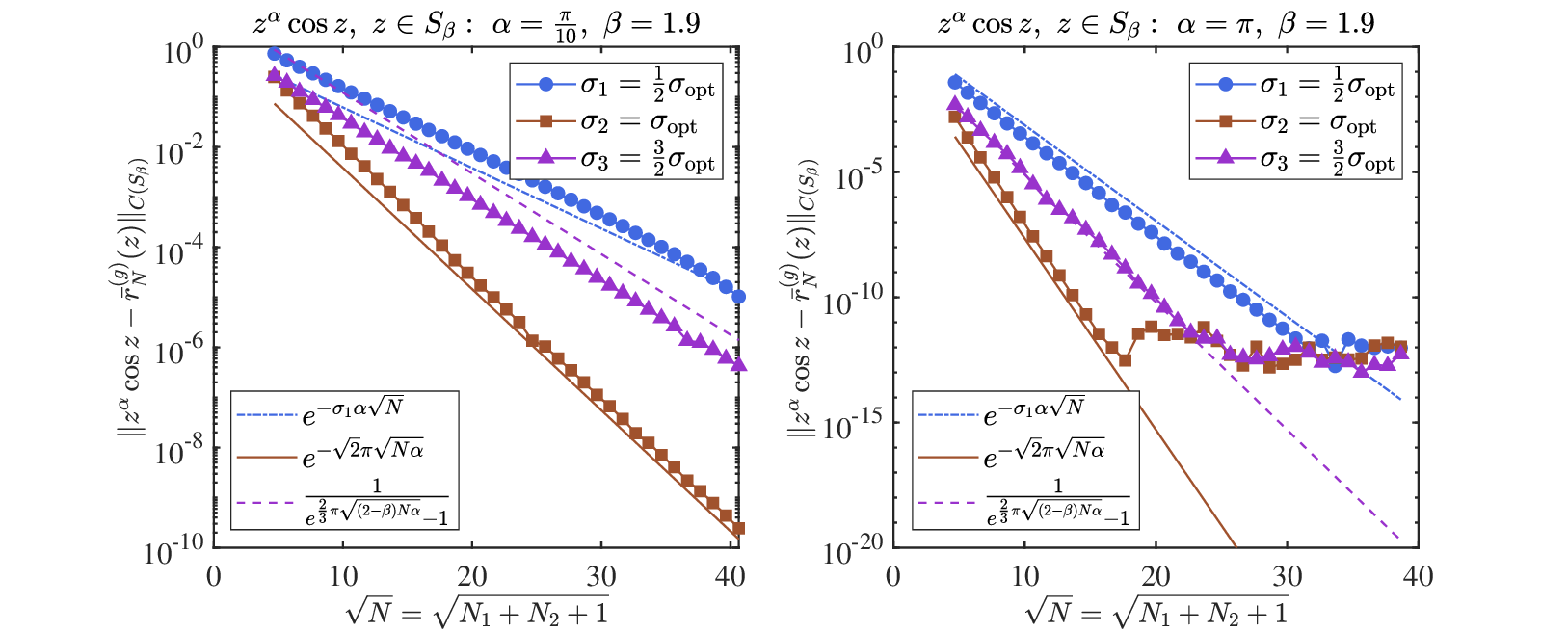}}
\centerline{\includegraphics[width=13cm]{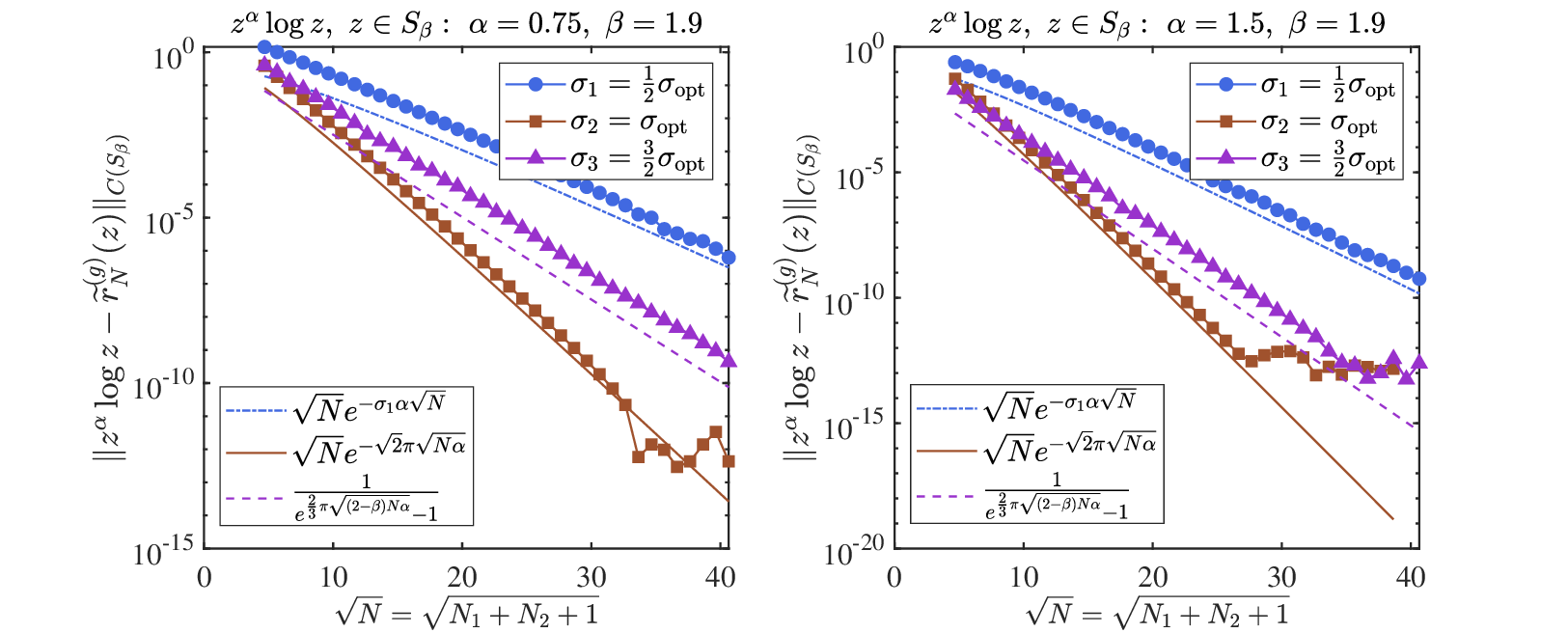}}
\caption{Decay rates of errors $\|z^{\alpha}-r_N(z)\|_{C(S_{\beta})}$ of LPs
$\bar{r}^{(g)}_{N}(z)$ for $g(z)z^\alpha$ and $\widetilde{r}^{(g)}_{N}(z)$ for $g(z)z^\alpha\log{z}$
with $g(z)=\cos{z}$ or $g(z)=1$ on $S_{\beta}$ and various values of $\alpha$ and
$\sigma_1=0.5\sigma_{\mathrm{opt}}$, $\sigma_2=\sigma_{\mathrm{opt}}$ and
$\sigma_3=1.5\sigma_{\mathrm{opt}}$, where $N=N_1+N_2+1$, $N_2={\rm ceil}(1.3\sqrt{N_1})$ and $\sigma_{\mathrm{opt}}
=\frac{\pi\sqrt{2-\beta}}{\sqrt{\alpha}}$.}\label{FIG_4LPsConvergenceRate}
\end{figure}

From Theorem \ref{mainthm}, we see that  $\bar{r}^{(g)}_N(z)$ and $\widetilde{r}^{(g)}_N(z)$  for $ g(z)z^\alpha$ and
$g(z)z^\alpha\log z$ with a fixed $\alpha>0$ achieve the fastest rates
$\mathcal{O}\big(e^{-\pi\sqrt{(2-\beta)N\alpha}}\big)$
and
$\mathcal{O}\big(\sqrt{N}e^{-\pi\sqrt{(2-\beta)N\alpha}}\big)$  with
$\sigma_{\rm opt}=\frac{\sqrt{2-\beta}\pi}{\sqrt{\alpha}}$ among all
$\sigma>0$, respectively
(see {\sc Figure} \ref{FIG_4LPsConvergenceRate}). Furthermore, for each $\sigma>0$, the rate in Theorem \ref{mainthm} is attainable. Thus, on the sector domain $S_{\beta}$ the optimal clustering parameter $\sigma$ is mainly determined by the magnitude of given $\alpha$.

In addition,
for a fixed lightning parameter $\sigma_0>0$, we can find $\alpha_0$ from $\sigma_0=\frac{\sqrt{2-\beta}\pi}{\sqrt{\alpha_0}}$, and see  from the first identity in \eqref{eq: rate1} and \eqref{eq: rate2} respectively that  the LPs enjoy a common convergence order  regardless of the increase of $\alpha(\ge\alpha_0)$
if $\alpha$ is not a positive integer:
\begin{align}
|\bar{r}^{(g)}_N(z)- g(z)z^\alpha|
=&\frac{\mathcal{G}^{\alpha}\max\{1,C^{\alpha}\}}{\varkappa(\beta)}
\left\{\begin{array}{ll}
\frac{\mathcal{O}(1)}{e^{\sigma_0\alpha\sqrt{N}}},&\alpha\le \alpha_0,\\
\frac{\mathcal{O}(1)}{e^{\pi\sqrt{(2-\beta)N\alpha_0}}-1},&\alpha> \alpha_0,
\end{array}\right.\label{eq: rate12}\\
|\widetilde{r}^{(g)}_N(z)- g(z)z^\alpha\log{z}|
=&\frac{\mathcal{G}^{\alpha}\max\{1,C^{\alpha}\}}{(\alpha+1)^{-1}\alpha\varkappa(\beta)}
\left\{\begin{array}{ll}
\frac{\mathcal{O}(1)\sqrt{N}}{e^{\sigma_0\alpha\sqrt{N}}},&\alpha\le \alpha_0,\\
\frac{\mathcal{O}(1)}{e^{\pi\sqrt{(2-\beta)N\alpha_0}}-1},&\alpha> \alpha_0,
\end{array}\right.\label{eq: rate22}
\end{align}
as $N \rightarrow \infty$, uniformly for $z\in S_{\beta}$, respectively (see {\sc Figure} \ref{FIG_4alpha0} for example), and  all the constants in the above $\mathcal{O}$ terms are independent of $\alpha$, $\sigma$, $N$ and $z$. Therefore, selecting the optimal
 $\sigma$ in \eqref{eq:uniform0}  is crucial for achieving the best convergence rate in the exploration of LPs on corner domains.

\begin{figure}[htbp]
\centerline{\includegraphics[width=12.6cm]{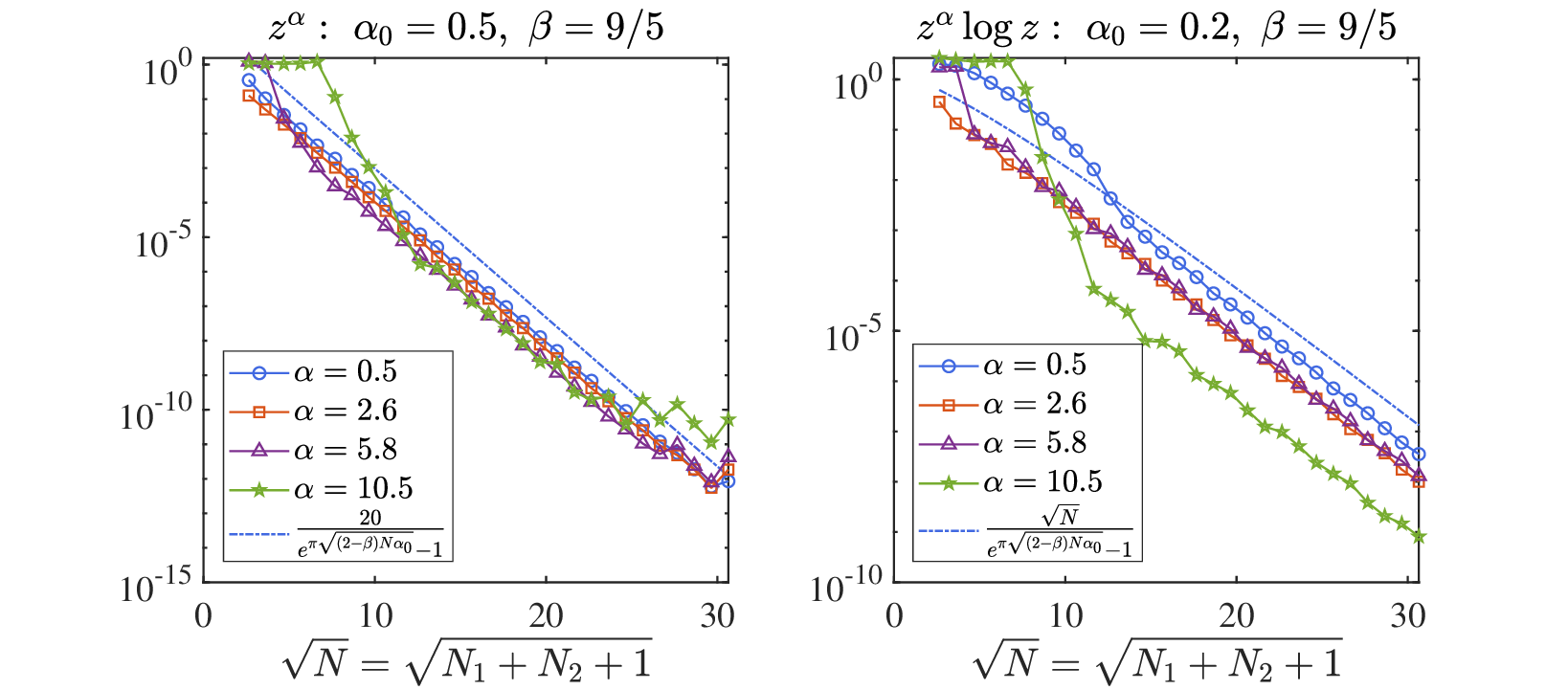}}
\caption{Decay rates of errors of LPs $\bar{r}^{(g)}_{N}(z)$ for $g(z)z^\alpha$ and $\widetilde{r}^{(g)}_{N}(z)$ for $g(z)z^\alpha\log{z}$ with $g(z)=1$ and various values of $\alpha$, equipped with the same clustering parameter $\sigma_0=\frac{\pi\sqrt{2-\beta}}{\sqrt{\alpha_0}}$,
where $N=N_1+N_2+1$, $N_2={\rm ceil}(1.3\sqrt{N_1})$.}\label{FIG_4alpha0}
\end{figure}

To fully characterize all the attainable  rates in Theorem \ref{mainthm}, we will extend  Paley-Wiener theorem \cite[Theorem 2.1, Chapter 4]{Stein} in a horizontal strip, and derive asymptotic results related to Fourier transforms  and the best exponential convergence rate $\mathcal{O}(h^{1-m_0}e^{-\frac{2a\pi}{h}})$ for the trapezoidal rule approximation with a step size of $h$ over
the whole real line (see Theorem \ref{PW2} and Corollary \ref{PW}), which, coupled with the synergistic application of fundamental complex analysis tools including Runge's approximation theorem, Cauchy's integral theorem, Poisson summation formula and the residue theorem, leads to Theorem \ref{mainthm}.

The rest of this paper is organized as follows. Section \ref{sec:22} is initially devoted to the integral representations of $z^{\alpha}$ and $z^{\alpha}\log{z}$ in the complex domain $\mathbb{C}$ slit along the negative semi-axis, with an exponential parameter $\alpha>0$. In Section \ref{sec:3}
the LP schemes are constructed and  a detailed analysis of the truncated  and approximate errors is presented.
Section \ref{sec:21} investigates the achievable upper bounds for the partial inverse of the Paley-Wiener theorem and analyzes the asymptotic decay rates of both continuous and discrete Fourier transforms. Additionally, it establishes the exact exponential convergence rate for the trapezoidal rule approximation over the entire real line.
In Section \ref{sec:4} we present a thorough analysis of the convergence rates of numerical quadratures of the integrals for $z^{\alpha}$ and $z^{\alpha}\log{z}$ by applying
 the Paley-Wiener theorem and
Poisson summation formula, which is crucial in establishing the root-exponential decay rates of LPs.
Then  Theorem \ref{mainthm} is proved in Section \ref{sec:5}.
In Section \ref{LPapp_cornerdomain} we discuss the application of obtained results to problems with corner singularities, which demonstrate the root-exponential convergence for LPs  and the best choice of parameter $\sigma$.

The numerical experiments on LPs \eqref{LP_cornerdomain} in solving Laplace equations in this paper are conducted using the {\sc Matlab} function \texttt{laplace} developed by Gopal and Trefethen in \cite{Gopal2019} and Trefethen  \cite{Treweb} by applying the best choice of the clustered parameter presented in Theorem \ref{mainthm}  for solving corner problems replacing the original $\sigma$ in \texttt{laplace}. Following the {\sc Matlab} function \texttt{laplace}, we set $N_2={\rm ceil}\left(1.3\sum_{k=1}^m\sqrt{N_{1,k}}\right)$.

\section{Integral representations of $z^\alpha$ and $z^\alpha \log z$}\label{sec:22}
A powerful approach for constructing rational functions to approximate  singular functions is the trapezoidal approximation, which originates from Stenger's study of sinc functions and related approximations \cite{GTNM2019,Stenger1981,Stenger1986,Stenger1993,TNWNM2021}.

To establish the convergence rates for the LPs in Theorem \ref{mainthm}, the starting point is the integral representation of $z^\alpha$ and  $z^\alpha\log z$.
According to \cite[p. 319, (3.222)]{GR2014}, $x^\alpha$ on $[0,1]$ can be represented by
 \begin{align*}
x^\alpha&=\frac{\sin(\alpha\pi)}{\alpha\pi}\int_0^{+\infty} \frac{x}{y^{\frac{1}{\alpha}}+x}\mathrm{d}y, \quad 0<\alpha<1.
\end{align*}
We extend this integral representation to the complex plane for all $\alpha$
and provide an analogous extension for $z^{\alpha}\log z$.

\begin{theorem}\label{complex_int_repre}
Let $\alpha>0$ and $\ell\ge\lfloor\alpha\rfloor$ where $\lfloor\alpha\rfloor$ denotes the largest integer not larger than $\alpha$.
Suppose that $s_1,\ldots,s_\ell$ are $\ell$ distinct numbers located outside $(-\infty,0]$.
Then it holds for all $z\in\mathbb{C}\setminus(-\infty,0)$ that
\begin{align}
z^{\alpha}=&\frac{\sin{(\alpha\pi)}}{(-1)^{\ell}\pi}
\int_0^{+\infty}\frac{zy^{\alpha-1}}{y+z}\left(\prod\limits_{k=1}^{\ell}\frac{z-s_k}{y+s_k}\right)\mathrm{d}y
+z\mathcal{L}[z^{\alpha-1};s_1,\ldots,s_\ell],\label{eq:cint_gener}\\
z^{\alpha}\log{z} =& \frac{\sin(\alpha\pi)}{(-1)^\ell\pi}\int_{0}^{+\infty}
\frac{zy^{\alpha-1}\log{y}}{y+z}\left(\prod_{k=1}^{\ell}\frac{z-s_k}{y+s_k}\right)\mathrm{d}y\notag\\
&+\frac{\cos(\alpha\pi)}{(-1)^{\ell}}\int_0^{+\infty}\frac{zy^{\alpha-1}}{y+z}
\left(\prod\limits_{k=1}^{\ell}\frac{z-s_k}{y+s_k}\right)\mathrm{d}y
+z\mathcal{L}[z^{\alpha-1}\log{z};s_1,\ldots,s_\ell],\label{eq:cint_log_gener}
\end{align}
where $\mathcal{L}[X(z);s_1,\ldots,s_\ell]$ denotes the Lagrange interpolating polynomial at $s_1,\ldots,s_\ell$ for $X(z)=z^{\alpha-1}$ and
$z^{\alpha-1}\log{z}$, respectively.
Especially, it holds for $0<\alpha<1$ that
\begin{align}
z^{\alpha}=&\frac{\sin{(\alpha\pi)}}{\pi}
\int_0^{+\infty}\frac{zy^{\alpha-1}}{y+z}\mathrm{d}y,\label{eq:cint}\\
z^{\alpha}\log{z}=&\frac{\sin{(\alpha\pi)}}{\pi}
\int_{0}^{+\infty}\frac{zy^{\alpha-1}\log{y}}{y+z}\mathrm{d}y
+\cos{(\alpha\pi)}\int_{0}^{+\infty}\frac{zy^{\alpha-1}\mathrm{d}y}{y+z}.\label{eq:cintlog}
\end{align}
\end{theorem}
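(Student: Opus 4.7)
The plan is to establish \eqref{eq:cint_gener} by residue calculus on a keyhole contour around the branch cut $(-\infty,0]$, and then to deduce \eqref{eq:cint_log_gener} by differentiating the resulting identity with respect to the exponent $\alpha$. The special cases \eqref{eq:cint} and \eqref{eq:cintlog} drop out by setting $\ell=0$, since the empty Lagrange polynomial is $0$ and the empty product is $1$.

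I would first rewrite \eqref{eq:cint_gener} in its equivalent form (after dividing through by $z\prod_k(z-s_k)$, valid for $z\ne 0,s_k$ and extended by continuity to the degenerate cases), namely
\[
\frac{z^{\alpha-1}-L(z)}{\prod_{k=1}^\ell(z-s_k)}=\frac{\sin(\alpha\pi)}{(-1)^\ell\pi}\int_0^\infty\frac{y^{\alpha-1}}{(y+z)\prod_{k=1}^\ell(y+s_k)}\,dy,
\]
where $L(w)=\mathcal{L}[z^{\alpha-1};s_1,\ldots,s_\ell](w)$. This suggests introducing the auxiliary function
\[
\widetilde{H}(w)=\frac{w^{\alpha-1}-L(w)}{(w-z)\prod_{k=1}^\ell(w-s_k)}
\]
on $\mathbb{C}\setminus((-\infty,0]\cup\{z\})$, using the principal branch of $w^{\alpha-1}$. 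The subtraction of $L(w)$ is arranged so that the numerator vanishes at every $s_k$, so each $s_k$ is a removable singularity and $\widetilde{H}$ has only one pole, at $w=z$, whose residue is exactly $(z^{\alpha-1}-L(z))/\prod_k(z-s_k)$, matching the target left-hand side.

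I would then apply Cauchy's theorem to $\widetilde{H}$ on the keyhole domain $D_{R,\epsilon}=\{\epsilon<|w|<R\}\setminus[-R,-\epsilon]$ and split the boundary integral into the big circle $C_R$, the small circle $C_\epsilon$, and the two banks of the slit. On $C_R$, the hypothesis $\ell\ge\lfloor\alpha\rfloor$ combined with $\deg L\le\ell-1$ yields $|\widetilde{H}(w)|=\mathcal{O}(|w|^{\alpha-\ell-2})+\mathcal{O}(|w|^{-2})$ with both exponents strictly below $-1$, so $|I_{C_R}|\to 0$ as $R\to\infty$. On $C_\epsilon$, the bound $|\widetilde{H}(w)|=\mathcal{O}(|w|^{\alpha-1})$ near the origin gives $|I_{C_\epsilon}|=\mathcal{O}(\epsilon^\alpha)\to 0$. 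Across the cut, $L$ is single-valued and contributes no jump, while the principal-branch values $(-s\pm i0)^{\alpha-1}=s^{\alpha-1}e^{\pm i(\alpha-1)\pi}$ combined with $\sin((\alpha-1)\pi)=-\sin(\alpha\pi)$ give the limiting cut contribution
\[
\frac{2i\sin(\alpha\pi)}{(-1)^\ell}\int_0^\infty\frac{s^{\alpha-1}}{(s+z)\prod_{k=1}^\ell(s+s_k)}\,ds.
\]
Equating the boundary integral with $2\pi i\cdot\operatorname{Res}_{w=z}\widetilde{H}$ and multiplying through by $z\prod_k(z-s_k)$ recovers \eqref{eq:cint_gener}.

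Finally, I would obtain \eqref{eq:cint_log_gener} by differentiating \eqref{eq:cint_gener} with respect to $\alpha$, using $\partial_\alpha y^{\alpha-1}=y^{\alpha-1}\log y$, $\partial_\alpha\sin(\alpha\pi)=\pi\cos(\alpha\pi)$, and the linearity of Lagrange interpolation in its data, which gives $\partial_\alpha\,\mathcal{L}[z^{\alpha-1};\,\cdot\,]=\mathcal{L}[z^{\alpha-1}\log z;\,\cdot\,]$. Differentiation under the integral sign is justified by dominated convergence with the envelope $y^{\alpha-1}(1+|\log y|)/\prod_{k=1}^\ell(y+s_k)$, which lies in $L^1(0,\infty)$ under $\ell\ge\lfloor\alpha\rfloor$. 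The main technical subtlety I anticipate is verifying the borderline big-circle decay when $\ell=\lfloor\alpha\rfloor$ and $\alpha$ is non-integer: one must check that $\alpha-\ell-2<-1$ strictly, which indeed follows from $\alpha-\lfloor\alpha\rfloor\in(0,1)$. For integer $\alpha$, the sine factor vanishes and the Lagrange interpolant of the polynomial $z^{\alpha-1}$ at $\ell\ge\alpha$ nodes reproduces it exactly, so the identity $z^\alpha=zL(z)$ is immediate.
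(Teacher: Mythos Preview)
Your argument is correct and reaches the same identities, but your route differs from the paper's in two notable ways. First, the paper places the branch cut along the \emph{positive} real axis and integrates $K(y,z)=y^{\alpha-1}/\bigl((y+z)\prod_k(y+s_k)\bigr)$ around a keyhole there, so that the poles sit at $-z$ and $-s_1,\ldots,-s_\ell$; it then computes all $\ell+1$ residues explicitly, and the Lagrange interpolant emerges as the sum $\sum_l s_l^{\alpha-1}\prod_{k\ne l}(z-s_k)/(s_l-s_k)$ of the residues at the $-s_l$. Your device of subtracting $L(w)$ from $w^{\alpha-1}$ in advance renders every $s_k$ removable and reduces the residue calculation to the single pole at $z$, which is cleaner. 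Second, for \eqref{eq:cint_log_gener} the paper does not differentiate in $\alpha$ but instead repeats the contour integration with the new integrand $K_{\log}(y,z)=y^{\alpha-1}\log y/\bigl((y+z)\prod_k(y+s_k)\bigr)$, picking up an extra $(i\pi+\log z)$ factor in each residue and then using the already-proved \eqref{eq:cint_gener} to separate the $i\pi$ and $\log z$ pieces; this pattern is what they later iterate to obtain the $\log^s z$ representations in Proposition~\ref{complexlogs_int_repre}. Your $\partial_\alpha$ shortcut is perfectly valid here (and could likewise be iterated), with the one caveat that the dominated-convergence envelope should also carry the $|y+z|^{-1}$ factor and should be stated for $\alpha'$ in a small two-sided neighborhood of $\alpha$; the integrability at infinity then uses $\alpha'<\ell+1$, which holds for $\alpha'$ close to $\alpha$ since $\ell\ge\lfloor\alpha\rfloor$ forces $\ell+1>\alpha$ strictly.
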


 \begin{proof}
Consider the integral $\int_0^{+\infty}K(y,z)\mathrm{d}y$ with $
K(y,z)=\frac{y^{\alpha-1}}{(y+z)\prod_{k=1}^{\ell}{(y+s_k)}}$
for $z\in\mathbb{C}\setminus(-\infty,0)$ and $z\not\in\{ 0,s_1,\ldots,s_\ell\}$ and $0<\alpha\notin\mathbb{N}_+$.
With the aid of Cauchy's residue theorem, we have an integral along a closed Jordan contour $\mathfrak{S}:\,\epsilon\rightarrow R\rightarrow\gamma_R\rightarrow R\rightarrow\epsilon\rightarrow\gamma^{-}_\epsilon$ (see {\sc Figure} \ref{path_for_representation}) in the complex plane split by the positive real line, which reads as
\begin{align}\label{general integral repres111}
\int_{\mathfrak{S}}K(y,z)\mathrm{d}y
=&\bigg\{\int_{\epsilon}^{R}+\int_{\gamma_R}
+e^{2i\alpha\pi}\int_{R}^{\epsilon}+\int_{\gamma_\epsilon^{-}}\bigg\}
K(y,z)\mathrm{d}y\notag\\
=&2i\pi{\rm Res}\left[K(y,z),-z\right]+2i\pi\sum\limits_{l=1}^{\ell}{\rm Res}\left[K(y,z),-s_l\right].
\end{align}
We used in \eqref{general integral repres111} the fact
$\log{y}|_{y\in[R\rightarrow\epsilon]}
=\log{y}|_{y\in[\epsilon\rightarrow R]}+2i\pi$, which implies that
$$y^{\alpha-1}|_{y\in[R\rightarrow\epsilon]}
=e^{(\alpha-1)\log{y}}|_{y\in[R\rightarrow\epsilon]}
=e^{2i\alpha\pi}y^{\alpha-1}|_{y\in[\epsilon\rightarrow R]}.$$
Here the radii $R$ and $\epsilon$ of $\gamma_R$ and $\gamma_\epsilon$ are chosen to be sufficiently large and small, respectively, such that $0<\epsilon<1<R$ and $-z,-s_1,\ldots,-s_\ell$ locate inside the domain included by $\mathfrak{S}$.

\begin{figure}[htbp]\vspace{-.5cm}
\centerline{\includegraphics[width=5cm]{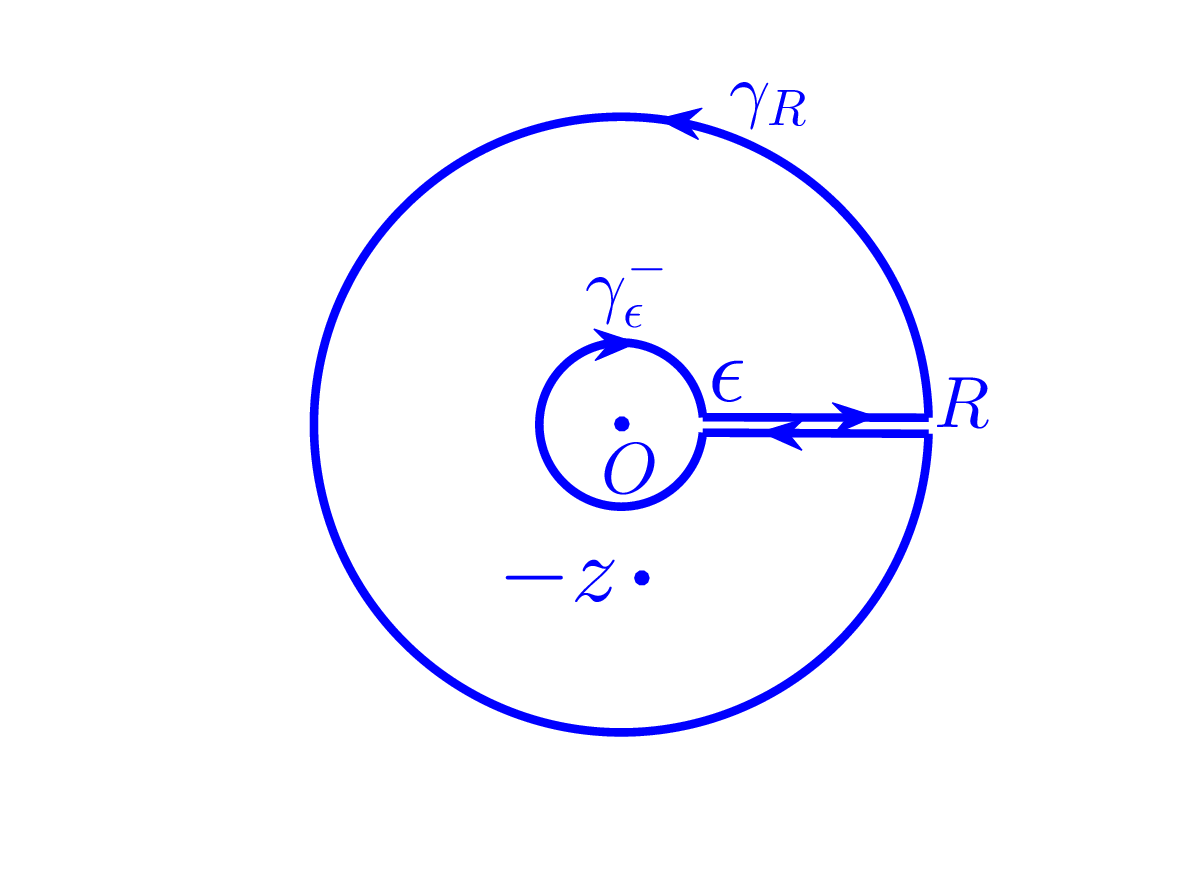}}\vspace{-.5cm}
\caption{The integral contour $\mathfrak{S}$ of \eqref{general integral repres111}.}
\label{path_for_representation}
\end{figure}

Letting  $R$ tend to $+\infty$ and $\epsilon$ to $0$ in \eqref{general integral repres111}, we have
\begin{align*}
\frac{1-e^{2i\alpha\pi}}{2i\pi}\int_0^{+\infty}\frac{y^{\alpha-1}\mathrm{d}y}{(y+z)\prod\limits_{k=1}^{\ell}{(y+s_k)}}
=&{\rm Res}\left[K(y,z),-z\right]+\sum\limits_{l=1}^{\ell}{\rm Res}\left[K(y,z),-s_l\right]
\end{align*}
since
\begin{align*}
\bigg|\int_{\gamma_R}K(y,z)\mathrm{d}y\bigg|
\le&\int_{\gamma_R}\frac{|e^{(\alpha-1)\log{y}}|{\rm d}s}{\left(|y|-|z|\right)\prod_{k=1}^{\ell}{(|y|-|s_k|)}}\\
=&\frac{R^{\alpha-1}}{\left(R-|z|\right)\prod_{k=1}^{\ell}{(R-|s_k|)}}2\pi R
\end{align*}
approaches to $0$ as $R\rightarrow+\infty$, and
\begin{align*}
\bigg|\int_{\gamma_\varepsilon}K(y,z)\mathrm{d}y\bigg|
\le&\frac{\epsilon^{\alpha-1}}{\left(|z|-\epsilon\right)\prod_{k=1}^{\ell}{(|s_k|-\epsilon)}}2\pi\epsilon
\end{align*}
tends to $0$ as $\epsilon\rightarrow0$.

By substituting the residues\footnote{We used here the fact that $(-z)^{\alpha-1}=e^{(\alpha-1)\left[\log{z}+\log(-1)\right]}
=e^{(\alpha-1)(\log{z}+i\pi)}=-e^{i\alpha\pi}z^{\alpha-1}$, since for $\zeta\ne0$ in the complex plane slit along the positive real semi-axis the principal argument angle $\arg{\zeta}\in[0,2\pi)$, and then $\arg(-1)=\pi$.
The analogous argument is also valid for the residues at $s_l,\ l=1,\ldots,\ell$.}
\begin{align*}
{\rm Res}\left[K(y,z),-z\right]
=\frac{(-1)^{\ell+1}e^{i\alpha\pi}z^{\alpha-1}}{\prod_{k=1}^{\ell}(z-s_k)}
\end{align*}
and
\begin{align*}
{\rm Res}\left[K(y,z),-s_l\right]
=-\frac{(-1)^{\ell+1}e^{i\alpha\pi}s_l^{\alpha-1}}{(z-s_l)\prod_{k=1,k\ne l}^{\ell}(s_l-s_k)}
\end{align*}
into \eqref{general integral repres111}, we get
\begin{align*}
\frac{\sin{(\alpha\pi)}}{\pi}\int_0^{+\infty}K(y,z)\mathrm{d}y
=\frac{(-1)^{\ell}z^{\alpha-1}}{\prod_{k=1}^{\ell}(z-s_k)}
-\sum_{l=1}^{\ell}\frac{(-1)^{\ell}s_l^{\alpha-1}}{(z-s_l)\prod_{k=1,k\ne l}^{\ell}(s_l-s_k)}
\end{align*}
which is equivalent to
\begin{align*}
z^{\alpha-1}=\frac{\sin{(\alpha\pi)}}{(-1)^\ell\pi}
\int_0^{+\infty}\frac{y^{\alpha-1}\prod_{k=1}^{\ell}(z-s_k)}{(y+z)\prod_{k=1}^{\ell}(y+s_k)}\mathrm{d}y
+\sum_{l=1}^{\ell}s_l^{\alpha-1}\prod\limits_{k=1,k\ne l}^{\ell}\frac{z-s_k}{s_l-s_k},
\end{align*}
then we arrive at the conclusion \eqref{eq:cint_gener} for $z\in\mathbb{C}\setminus(-\infty,0)$ and $z\ne 0, s_1,\ldots,s_\ell$
and $0<\alpha\notin\mathbb{N}_+$. Additionally, \eqref{eq:cint_gener} holds obviously for $\alpha\in\mathbb{N}_+$.

Analogously, considering the integrand of
\begin{align*}
  K_{\log}(y,z)=\frac{y^{\alpha-1}\log{y}}{(y+z)\prod_{k=1}^{\ell}(y+s_k)},\ \alpha>0,\ z\in\mathbb{C}\setminus(-\infty,0)
\end{align*}
along the closed Jordan contour $\mathfrak{S}$, 
we see that for $z\ne 0,s_1,\ldots,s_{\ell}$,
\allowdisplaybreaks
\begin{align*}
&\frac{\sin(\alpha\pi)}{\pi}\int_{0}^{+\infty}K_{\log}(y,z)\mathrm{d}y
+e^{i\alpha\pi}\int_{0}^{+\infty}K(y,z)\mathrm{d}y\\
=&\frac{(-1)^{\ell}z^{\alpha-1}(i\pi+\log{z})}{\prod_{k=1}^{\ell}(z-s_k)}
-\sum_{l=1}^{\ell}\frac{(-1)^{\ell}s_l^{\alpha-1}(i\pi+\log{s_l})}{(z-s_l)\prod_{k=1,k\ne l}^{\ell}(s_l-s_k)}\\
=&\frac{(-1)^{\ell}z^{\alpha-1}\log{z}}{\prod_{k=1}^{\ell}(z-s_k)}
-\sum_{l=1}^{\ell}\frac{(-1)^{\ell}s_l^{\alpha-1}\log{s_l}}{(z-s_l)\prod_{k=1,k\ne l}^{\ell}(s_l-s_k)}\\
&+\frac{(-1)^{\ell}i\pi z^{\alpha-1}}{\prod_{k=1}^{\ell}(z-s_k)}
-\sum_{l=1}^{\ell}\frac{(-1)^{\ell}i\pi s_l^{\alpha-1}}{(z-s_l)\prod_{k=1,k\ne l}^{\ell}(s_l-s_k)},
\end{align*}
that is,
\begin{multline*}
\frac{\sin(\alpha\pi)}{(-1)^\ell\pi}\int_{0}^{+\infty}
\frac{y^{\alpha-1}\log{y}}{y+z}\left(\prod_{k=1}^{\ell}\frac{z-s_k}{y+s_k}\right)\mathrm{d}y
+\frac{e^{i\alpha\pi}}{(-1)^{\ell}}\int_0^{+\infty}\frac{y^{\alpha-1}}{y+z}\left(\prod\limits_{k=1}^{\ell}\frac{z-s_k}{y+s_k}\right)\mathrm{d}y\\
=z^{\alpha-1}\log{z}-\mathcal{L}[z^{\alpha-1}\log{z};s_1,\ldots,s_\ell]
+i\pi\left(z^{\alpha-1}-\mathcal{L}[z^{\alpha-1};s_1,\ldots,s_\ell]\right).
\end{multline*}
Thus we establish by \eqref{eq:cint_gener} that
\begin{align*}
  z^{\alpha}\log{z} =& \frac{\sin(\alpha\pi)}{(-1)^\ell\pi}\int_{0}^{+\infty}
\frac{zy^{\alpha-1}\log{y}}{y+z}\left(\prod_{k=1}^{\ell}\frac{z-s_k}{y+s_k}\right)\mathrm{d}y\\
&+\frac{\cos(\alpha\pi)}{(-1)^{\ell}}\int_0^{+\infty}\frac{zy^{\alpha-1}}{y+z}
\left(\prod\limits_{k=1}^{\ell}\frac{z-s_k}{y+s_k}\right)\mathrm{d}y
+z\mathcal{L}[z^{\alpha-1}\log{z};s_1,\ldots,s_\ell]
\end{align*}
due to that
\begin{align*}
i\pi\left(z^{\alpha}-z\mathcal{L}[z^{\alpha-1};s_1,\ldots,s_\ell]\right)
=\frac{i\sin(\alpha\pi)}{(-1)^{\ell}}
\int_0^{+\infty}\frac{y^{\alpha-1}}{y+z}
\left(\prod\limits_{k=1}^{\ell}\frac{z-s_k}{y+s_k}\right)\mathrm{d}y.
\end{align*}
Thus we arrive at \eqref{eq:cint_log_gener} for $z\in\mathbb{C}\setminus(-\infty,0]$ with $z\ne 0, s_1,\ldots,s_\ell$.

It is clear that \eqref{eq:cint_gener} and \eqref{eq:cint_log_gener} hold for $z=0$. In addition, for $z\in \{s_1,\ldots,s_\ell\}$,  \eqref{eq:cint_gener} and \eqref{eq:cint_log_gener} are also satisfied due to $s_k^\alpha=s_k\mathcal{L}[s_k^{\alpha-1};s_1,\ldots,s_\ell]$ and $s_k^\alpha\log s_k=s_k\mathcal{L}[s_k^{\alpha-1}\log s_k;s_1,\ldots,s_\ell]$.

By setting $\ell=0$, we directly obtain  \eqref{eq:cint} and \eqref{eq:cintlog}.
\end{proof}

\section{Principles of LPs \eqref{eq:rat} for $z^{\alpha}$ and  $z^{\alpha}\log z$}\label{sec:3}
Using the integral representations \eqref{eq:cint_gener} and \eqref{eq:cint_log_gener}, along with a rigorous analysis of truncated errors, this section develops LPs
for $z^{\alpha}$ and $z^{\alpha}\log{z}$ ($\alpha>0$). To achieve the sharp estimates on the convergence rates in Theorem \ref{mainthm}, we set $s_k$ as the roots of the shifted Chebyshev polynomial of the first kind $\mathrm{T}_{\ell}(2s-2\delta-1)$, i.e., $s_k=\delta+\frac{1}{2}\left(1+\cos\frac{(2\ell -2k+1)\pi}{2\ell}\right)$ $\in [\delta,\delta+1]$ for some $\delta> 0$ following \cite{LWL,Trefethen2011,Trefethen2013,wangSINUM,X2016}.

\subsection{Exponential transformation}\label{sec3-1}
By applying the exponential transformation $y=Ce^{\frac{1}{\alpha}t}$, from \eqref{eq:cint_gener} and \eqref{eq:cint_log_gener} it follows for $z\in S_{\beta}$  that
\begin{align}
z^\alpha=&\frac{\sin(\alpha\pi)}{(-1)^{\ell}\alpha\pi}
\int_{-\infty}^{+\infty}
\frac{zC^{\alpha}e^t}{Ce^{\frac{1}{\alpha}t}+z}
\left(\prod\limits_{k=1}^{\ell}\frac{z-s_k}{Ce^{\frac{1}{\alpha}t}+s_k}\right)\mathrm{d}t
+z\mathcal{L}[z^{\alpha-1};s_1,\ldots,s_\ell],\label{eq:zalpha}\\
  z^{\alpha}\log{z} =& \frac{\sin(\alpha\pi)}{(-1)^\ell\alpha^2\pi}\int_{-\infty}^{+\infty}
\frac{zC^{\alpha}te^{t}}{Ce^{\frac{1}{\alpha}t}+z}
\left(\prod_{k=1}^{\ell}\frac{z-s_k}{Ce^{\frac{1}{\alpha}t}+s_k}\right)\mathrm{d}t\notag\\
&+\left[\frac{\sin(\alpha\pi)\log{C}}{(-1)^\ell\alpha\pi}+\frac{\cos(\alpha\pi)}{(-1)^{\ell}\alpha}\right]
\int_{-\infty}^{+\infty}\frac{zC^{\alpha}e^{t}}{Ce^{\frac{1}{\alpha}t}+z}
\left(\prod_{k=1}^{\ell}\frac{z-s_k}{Ce^{\frac{1}{\alpha}t}+s_k}\right)\mathrm{d}t\label{eq:cint_log_gener_substituting}\\
&+z\mathcal{L}[z^{\alpha-1}\log{z};s_1,\ldots,s_\ell].\notag
\end{align}

To ensure the uniform approximation of LPs \eqref{eq:rat} on $S_\beta$, we set $\ell=\lfloor\alpha\rfloor$ 
and $\kappa=\frac{\alpha}{\ell+1-\alpha}$  for $z^{\alpha}$ such that
$$
\bigg|\frac{\sin(\alpha\pi)}{(-1)^{\ell}\alpha\pi}\kappa\bigg|=\frac{|\sin((\ell+1-\alpha)\pi)|}{(\ell+1-\alpha)\pi}
$$
is bounded as $\alpha$ tends to a nonnegative integer, then Theorem \ref{mainthm} on $g(z)z^\alpha$
is uniformly satisfied for all $\alpha>0$, respectively.

 To obtain the uniformity of LP \eqref{eq:rat} for $z^{\alpha}\log{z}$ on $\alpha$, we set $\ell=\lceil\alpha\rceil$ (i.e., the smallest  integer greater than or equal to $\alpha$) and $\kappa=\frac{\alpha}{\ell+1-\alpha}$ such that $\kappa$ is bounded as $\alpha$ tends to a positive integer.

Consequently, using  \eqref{eq:zalpha} and \eqref{eq:cint_log_gener_substituting}, the LPs for  $z^{\alpha}$ and $z^{\alpha}\log{z}$ are obtained by discretizing truncations over a finite interval of improper integrals
\begin{align*}
\int_{-\infty}^{+\infty}
\frac{zC^{\alpha}e^{t}}{Ce^{\frac{1}{\alpha}t}+z}
\left(\prod_{k=1}^{\ell}\frac{z-s_k}{Ce^{\frac{1}{\alpha}t}+s_k}\right)\mathrm{d}t\quad\mbox{and}\quad
\int_{-\infty}^{+\infty}
\frac{zC^{\alpha}te^{t}}{Ce^{\frac{1}{\alpha}t}+z}
\left(\prod_{k=1}^{\ell}\frac{z-s_k}{Ce^{\frac{1}{\alpha}t}+s_k}\right)\mathrm{d}t.
\end{align*}
Subsequently, crucial effort  will be devoted to analyzing the truncation and quadrature errors of these integrals.

\subsection{Truncation errors}\label{sec:truncation_error}\label{sec3-2}
It is worthy of noting that for $z=|z|e^{\pm i\frac{\phi}{2}}$ and $t\in \mathbb{R}$ it holds
\begin{align*}
\big{|}Ce^{\frac{1}{\alpha}t}+z\big{|}=\left\{\begin{array}{ll}
\sqrt{C^2e^{\frac{2}{\alpha}t}+2C|z|e^{\frac{1}{\alpha}t}\cos\frac{\phi}{2} +|z|^2}\ge |z|,&0\le \phi\le \pi,\\
\sqrt{\left(Ce^{\frac{1}{\alpha}t}+|z|\cos\frac{\phi}{2}\right)^2+|z|^2\sin^2\frac{\phi}{2}}\ge |z|\sin\frac{\phi}{2},&\pi<\phi<2\pi,
\end{array}\right.
\end{align*}
then, for $z=xe^{\pm i\frac{\theta}{2}\pi}\in S_{\beta}$ and $0\le \theta\le \beta<2$ it follows that
\begin{align}\label{eq:est1}
\big{|}Ce^{\frac{1}{\alpha}t}+z\big{|}
\ge x\varkappa(\theta),\quad \varkappa(\theta)=\left\{
\begin{array}{ll}
1,&0\le \theta\le 1\\
\sin{\frac{\theta\pi}{2}}
\ge\sin\frac{\beta\pi}{2},&1< \theta <2
\end{array}\right.
\end{align}
with $\varkappa(\theta)\ge\varkappa(\beta)$, which, together with 
\allowdisplaybreaks
\begin{align*}
\bigg|\prod_{k=1}^{\ell}\frac{z-s_k}{Ce^{\frac{1}{\alpha}t}+s_k}\bigg|\le  \frac{\Big\|\prod\limits_{k=1}^{\ell}(z-s_k)\Big\|_{C(S_\beta)}}{\prod_{k=1}^{\ell}s_k}
=\frac{\mathbb{T}_{\ell,\beta}}
{\prod_{k=1}^{\ell}\left[\delta+\frac{1}{2}\left(1+\cos\frac{(2k-1)\pi}{2\ell}\right)\right]}
\le\frac{\mathbb{T}_{\ell,\beta}}{\delta^{\ell}},
\end{align*}
implies
\begin{align}\label{ine:minus_u}
\left|\frac{zC^{\alpha}|t|^l e^t}{Ce^{\frac{1}{\alpha}t}+z}
\left(\prod\limits_{k=1}^{\ell}\frac{z-s_k}{Ce^{\frac{1}{\alpha}t}+s_k}\right)\right|
\le& \frac{x|t|^lC^{\alpha}e^t} {x\varkappa(\beta)}\cdot\frac{\mathbb{T}_{\ell,\beta}}{\delta^{\ell}}
\le\frac{\mathbb{T}_{\ell,\beta}|t|^l C^{\alpha}e^t}{\delta^{\ell}\varkappa(\beta)}
\end{align}
for $l=0,1$, where $\mathbb{T}_{\ell,\beta}=\Big\|\prod\limits_{k=1}^{\ell}|z-s_k|\Big\|_{C(S_\beta)}=\max\limits_{z\in S_{\beta}}\prod\limits_{k=1}^{\ell}|z-s_k|$. Consequently we have
\begin{align}\label{eq:inequ_neg}
\left|\int_{-\infty}^{-T}
\frac{zC^{\alpha}t^l e^t}{Ce^{\frac{1}{\alpha}t}+z}
\left(\prod\limits_{k=1}^{\ell}\frac{z-s_k}{Ce^{\frac{1}{\alpha}t}+s_k}\right)\mathrm{d}t\right|
\le\frac{\mathbb{T}_{\ell,\beta}}{\delta^{\ell}}\frac{C^{\alpha}(1+T)^le^{-T}}{\varkappa(\beta)}.
\end{align}

While by
\begin{align}\label{eq:est2}
\big{|}Ce^{\frac{1}{\alpha}t}+z\big{|}
=&\left\{\begin{array}{ll}
\sqrt{C^2e^{\frac{2}{\alpha}t}+2Cxe^{\frac{1}{\alpha}t}\cos\frac{\theta\pi}{2} +x^2},&0\le \theta\le 1\\
\sqrt{\left(Ce^{\frac{1}{\alpha}t}\cos\frac{\theta\pi}{2}+x\right)^2+C^2e^{\frac{2}{\alpha}t}\sin^2\frac{\theta\pi}{2}},&1< \theta\le \beta< 2
\end{array}\right.\\
\ge&Ce^{\frac{1}{\alpha}t}\varkappa(\beta),\quad z\in S_\beta,\notag
\end{align}
we get for $t\ge 0$ that
\begin{align}\label{ieq:positive_u}
\left|\frac{zt^lC^{\alpha}e^t}{Ce^{\frac{1}{\alpha}t}+z}
\left(\prod\limits_{k=1}^{\ell}\frac{z-s_k}{Ce^{\frac{1}{\alpha}t}+s_k}\right)\right|
\le \frac{xt^lC^{\alpha}e^t}{Ce^{\frac{1}{\alpha}t}\varkappa(\beta)}\cdot\frac{\mathbb{T}_{\ell,\beta}}{C^{\ell}e^{\frac{\ell}{\alpha}t}}
\le\frac{\mathbb{T}_{\ell,\beta}t^le^{-\frac{1}{\kappa}t}}{C^{\ell+1-\alpha}\varkappa(\beta)}
\end{align}
and then
\begin{align}\label{eq:inequ_pos}
\left|\int_{\kappa T}^{+\infty}
\frac{zC^{\alpha}t^l e^t}{Ce^{\frac{1}{\alpha}t}+z}
\left(\prod\limits_{k=1}^{\ell}\frac{z-s_k}{Ce^{\frac{1}{\alpha}t}+s_k}\right)\mathrm{d}t\right|
\le\frac{\mathbb{T}_{\ell,\beta}\kappa(\kappa+\kappa T)^le^{-T}}{C^{\ell+1-\alpha}\varkappa(\beta)}.
\end{align}
Thus, together with \eqref{eq:inequ_neg} and \eqref{eq:inequ_pos}, it derives
\begin{align}\label{eq:truncerrors}
&\int_{-\infty}^{+\infty}
\frac{zC^{\alpha}t^l e^{t}}{Ce^{\frac{1}{\alpha}t}+z}
\left(\prod_{k=1}^{\ell}\frac{z-s_k}{Ce^{\frac{1}{\alpha}t}+s_k}\right)\mathrm{d}t\notag\\
=&\left\{\int_{-\infty}^{-T}+\int_{-T}^{\kappa T}+\int_{\kappa T}^{+\infty}\right\}
\frac{zC^{\alpha}t^l e^t}{Ce^{\frac{1}{\alpha}t}+z}
\left(\prod\limits_{k=1}^{\ell}\frac{z-s_k}{Ce^{\frac{1}{\alpha}t}+s_k}\right)\mathrm{d}t\\
=&\int_{-T}^{\kappa T}
\frac{zC^{\alpha}t^l e^{t}}{Ce^{\frac{1}{\alpha}t}+z}
\left(\prod_{k=1}^{\ell}\frac{z-s_k}{Ce^{\frac{1}{\alpha}t}+s_k}\right)\mathrm{d}t+\widehat{E}^{(l)}_T(z)\notag
\end{align}
where
\begin{align}\label{eq:boundforwidehatE}
\left|\widehat{E}^{(l)}_T(z)\right|\le
\frac{(1+T)^l\mathbb{T}_{\ell,\beta}C^{\alpha}e^{-T}}{\varkappa(\beta)}
\left(\frac{1}{\delta^{\ell}}+\frac{\kappa^{l+1}}{C^{\ell+1}}\right),\quad l=0,1.
\end{align}

\subsection{Construction of the rational functions for $z^\alpha$ and $z^\alpha\log z$}\label{sec3-3}
Discretization using the rectangular rule
 in $N_t+1$ quadrature points  with step length $h=\frac{\sigma\alpha}{\sqrt{N_1}}$:
\begin{align*}
T=N_1h=\sigma\alpha\sqrt{N_1},\quad \mathcal{N}_th =(\kappa+1)T\, ({\rm i.e.},\, \mathcal{N}_t=(\kappa+1)N_1),\quad N_t={\rm ceil}(\mathcal{N}_t),
 \end{align*}
gives rise to the following rational approximations by \eqref{eq:truncerrors} 
\allowdisplaybreaks
\begin{align}\label{eq:intC1}
&\int_{-\infty}^{+\infty}
\frac{zC^{\alpha}t^l e^{t}}{Ce^{\frac{1}{\alpha}t}+z}
\left(\prod_{k=1}^{\ell}\frac{z-s_k}{Ce^{\frac{1}{\alpha}t}+s_k}\right)\mathrm{d}t\notag\\
=&\int_{-T}^{\kappa T}\frac{zC^{\alpha}t^le^t}{Ce^{\frac{1}{\alpha}t}+z}
\left(\prod\limits_{k=1}^{\ell}\frac{z-s_k}{Ce^{\frac{1}{\alpha}t}+s_k}\right)\mathrm{d}t +\widehat{E}^{(l)}_T(z)\\
=&\int_0^{(\kappa+1)T}
\frac{z C^{\alpha}(u-T)^le^{u-T}}{Ce^{\frac{1}{\alpha}(u-T)}+z}
 \left(\prod\limits_{k=1}^{\ell}\frac{z-s_k}{Ce^{\frac{1}{\alpha}(u-T)}+s_k}\right)\mathrm{d}u +\widehat{E}^{(l)}_T(z)
\notag\\
=&\int_0^{N_th}
 \frac{z C^{\alpha}(u-T)^le^{u-T}}{Ce^{\frac{1}{\alpha}(u-T)}+z}
 \left(\prod\limits_{k=1}^{\ell}\frac{z-s_k}{Ce^{\frac{1}{\alpha}(u-T)}+s_k}\right)\mathrm{d}u
 +E^{(l)}_T(z)\notag\\
=:& r^{(l)}_{N_t}(z) +E_Q^{(l)}(z)+E^{(l)}_T(z),\notag
\end{align}
where
 $r^{(l)}_{N_t}(z)$ is the rational approximation defined by
\begin{align}\label{eq:ECrat1CC}
 r^{(l)}_{N_t}(z)=&h\sum_{j=0}^{N_t}
\frac{zC^{\alpha}(jh-T)^le^{jh-T}}{Ce^{\frac{1}{\alpha}(jh-T)}+z}
\left(\prod\limits_{k=1}^{\ell}\frac{z-s_k}{Ce^{\frac{1}{\alpha}(jh-T)}+s_k}\right)
\end{align}
derived from the rectangular rule, and $E_Q^{(l)}(z)$ represents the quadrature error
\begin{align*}
E_Q^{(l)}(z)=\int_0^{N_th}
\frac{z C^{\alpha}(u-T)^le^{u-T}}{Ce^{\frac{1}{\alpha}(u-T)}+z}
 \left(\prod\limits_{k=1}^{\ell}\frac{z-s_k}{Ce^{\frac{1}{\alpha}(u-T)}+s_k}\right)\mathrm{d}u
- r^{(l)}_{N_t}(z),
\end{align*}
and
\begin{align}\label{boundforEl(z)}
|E^{(l)}_T(z)|=&\left|\widehat{E}^{(l)}_T(z)-\int_{(\kappa+1)T}^{N_th}
\frac{zC^{\alpha}(u-T)^le^{u-T}}{e^{\frac{1}{\alpha}(u-T)}+z}
\left(\prod\limits_{k=1}^{\ell}\frac{z-s_k}{Ce^{\frac{1}{\alpha}(u-T)}+s_k}\right)du\right|\notag\\
\le&\left|\widehat{E}^{(l)}_T(z)\right|+\int_{(\kappa+1)T}^{+\infty}\left|
\frac{zC^{\alpha}(u-T)^le^{u-T}}{e^{\frac{1}{\alpha}(u-T)}+z}
\left(\prod\limits_{k=1}^{\ell}\frac{z-s_k}{Ce^{\frac{1}{\alpha}(u-T)}+s_k}\right)\right|\mathrm{d}u\\
=&|\widehat{E}^{(l)}_T(z)|+\int_{\kappa T}^{+\infty}
\left|\frac{zC^{\alpha}t^le^{t}}{e^{\frac{1}{\alpha}t}+z}
\left(\prod\limits_{k=1}^{\ell}\frac{z-s_k}{Ce^{\frac{1}{\alpha}t}+s_k}\right)\right|\mathrm{d}t\notag\\
\le&\frac{2(1+T)^{l}\mathbb{T}_{\ell,\beta}C^{\alpha}}{\varkappa(\beta)e^{T}}
\left(\frac{1}{\delta^{\ell}}+\frac{\kappa^{l+1}}{C^{\ell+1}}\right)\notag
\end{align}
by \eqref{eq:inequ_pos} and \eqref{eq:boundforwidehatE}.

In particular, the rational function $r^{(l)}_{N_t}(z)$ \eqref{eq:ECrat1CC} can be rewritten by using the exponential clustered poles $p_j=-Ce^{\frac{1}{\alpha}(jh-T)}
=-Ce^{-\frac{\sigma(N_1-j)}{\sqrt{N_1}}}$ ($0\le j\le N_t$) as
\begin{align}\label{eq:ECrat1C}
r^{(l)}_{N_t}(z)=&\sum_{j=0}^{N_t}\frac{z|p_j|^{\alpha}h(jh-T)^l}{z-p_j}
\left(\prod\limits_{k=1}^{\ell}\frac{z-s_k}{s_k-p_j}\right)\notag\\
=&\sum_{j=0}^{N_1}\left(
\frac{p_j|p_j|^{\alpha}h(jh-T)^l}{z-p_j}+|p_j|^{\alpha}h(jh-T)^l\right)
\left(\prod\limits_{k=1}^{\ell}\frac{z-s_k}{s_k-p_j}
\right)\\
&+\sum_{j=N_1+1}^{N_t}
\frac{z|p_j|^{\alpha}h(jh-T)^l}{z-p_j}
\left(\prod\limits_{k=1}^{\ell}\frac{z-s_k}{s_k-p_j}
\right)\notag\\
=&\sum_{j=0}^{N_1}\frac{a^{(l)}_j}{z-p_j}
+\sum_{j=N_1+1}^{N_t}
\frac{z|p_j|^{\alpha}h(jh-T)^l}{z-p_j}
\left(\prod\limits_{k=1}^{\ell}\frac{z-s_k}{s_k-p_j}
\right)+P^{(l)}_\ell(z)\notag\\
=&:r^{(l)}_{N_1}(z)+r^{(l)}_2(z)+P^{(l)}_\ell(z)\notag
 \end{align}
where $a^{(l)}_j=(-1)^{\ell}hp_j|p_j|^\alpha(jh-T)^l$ ($0\le j\le N_1$) evaluated
by Cauchy's residue theorem, and $P^{(l)}_\ell(z)$ is a polynomial of degree $\ell$.

 \subsection{Runge's approximation theorem}\label{sec3-4}
 Subsequently, we will demonstrate that $r^{(l)}_2(z)$ in \eqref{eq:ECrat1C} can be efficiently approximated  with an exponential convergence rate by a polynomial $P^{(l)}_{N_2}(z)$ of degree $N_2=\mathcal{O}(\sqrt{N_1})$ from the proof of Runge's approximation theorem \cite[pp. 76-77]{Gaier1987} and \cite[pp. 8-9]{Walsh1965}.

\begin{theorem}\label{Runge}\cite[1885, Runge]{Gaier1987}
Suppose $K\subset \mathbb{C}$ is compacted, $K^C=\mathbb{C}\setminus K$ is connected, and $f$ is analytic on $K$. Then there exist polynomials $\left\{P_n\right\}_{n=1}^{\infty}$ such that
\begin{align*}
\lim_{n\rightarrow \infty}\max_{z\in K}|f(z)-P_n(z)|=0.
\end{align*}
\end{theorem}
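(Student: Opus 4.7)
The plan is to prove Runge's theorem in two classical stages: first, approximate $f$ uniformly on $K$ by rational functions whose poles lie in $K^c$; second, use the connectedness of $K^c$ to show that such rational functions can in turn be uniformly approximated on $K$ by polynomials.

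For the first stage, since $f$ is analytic on the compact set $K$, it extends analytically to some open neighborhood $U \supset K$. A standard construction (covering $K$ by small squares of a fine grid inside $U$ and taking the boundary cycle of those squares meeting $K$) produces a cycle $\Gamma$ lying in $U \setminus K$ such that Cauchy's integral formula yields
\begin{equation*}
f(z) = \frac{1}{2\pi i}\int_\Gamma \frac{f(\zeta)}{\zeta - z}\,d\zeta, \qquad z \in K.
\end{equation*}
Since $\operatorname{dist}(\Gamma,K) > 0$ and the integrand is uniformly continuous in $\zeta$ for $z \in K$, discretising $\Gamma$ by Riemann sums produces rational functions
\begin{equation*}
R_n(z) = \sum_j \frac{c_{j,n}}{z - \zeta_{j,n}}
\end{equation*}
with $\zeta_{j,n} \in \Gamma \subset K^c$, converging uniformly to $f$ on $K$. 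This reduces the problem to approximating each simple-pole function $\frac{1}{z-a}$, $a \in K^c$, uniformly on $K$ by polynomials.

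For the second stage, I would use the pole-pushing lemma. Fix $R > \max_{z\in K}|z|$. If $|a|>R$, then the geometric series
\begin{equation*}
\frac{1}{z-a} = -\frac{1}{a}\sum_{k=0}^{\infty}\Bigl(\frac{z}{a}\Bigr)^k
\end{equation*}
converges uniformly on $K$, giving polynomial approximations directly. For a general $a \in K^c$, observe that if $b \in K^c$ and $|a-b| < \operatorname{dist}(b,K)$, then on $K$
\begin{equation*}
\frac{1}{z-a} = \frac{1}{(z-b) - (a-b)} = \frac{1}{z-b}\sum_{k=0}^{\infty}\Bigl(\frac{a-b}{z-b}\Bigr)^k,
\end{equation*}
converging uniformly, so any polynomial in $\frac{1}{z-b}$ can be uniformly approximated on $K$ by polynomials in $\frac{1}{z-a}$, and vice versa. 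Define $A \subset K^c$ to be the set of points $a$ such that $\frac{1}{z-a}$ is a uniform limit on $K$ of polynomials in $z$. The pole-pushing identity shows $A$ is both open and closed in $K^c$; since $K^c$ is connected and $A$ contains every $a$ with $|a|>R$, we conclude $A = K^c$. Combining the two stages yields polynomials $P_n$ with $\max_{z \in K}|f(z)-P_n(z)| \to 0$.

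The main technical obstacle is the second stage: verifying that the set $A$ is both open and closed inside $K^c$ requires the careful uniform estimate on the Taylor tail of $\frac{1}{z-a}$ around a nearby $b$, and it is here that the hypothesis that $K^c$ is connected (and contains a neighbourhood of $\infty$) is indispensable. The first stage is essentially a routine application of Cauchy's integral formula with a Riemann-sum discretisation, and produces only a quantitative approximation rate controlled by $\operatorname{dist}(\Gamma,K)$, which is what is needed for the application to $r_2^{(l)}(z)$ with $N_2 = \mathcal{O}(\sqrt{N_1})$ in the preceding construction.
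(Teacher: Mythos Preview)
Your proof is the standard two-stage argument (Cauchy integral discretisation plus pole-pushing via connectedness of $K^c$) and is correct. However, the paper does not actually prove this theorem: it is stated as a citation to Gaier's book and used as a black box. What the paper \emph{does} do in the surrounding text is invoke a quantitative refinement due to Levin and Saff, namely the Hermite remainder formula
\[
r^{(l)}_2(z)-q^{(l)}_n(z)=\frac{1}{2\pi i}\int_{\Gamma_\rho}\frac{\ell_{n+1}(z)}{\ell_{n+1}(t)}\frac{r^{(l)}_2(t)}{t-z}\,dt,
\]
with Fekete interpolation nodes on $\Gamma_\rho$, to extract the explicit geometric rate $\rho^{-n}$ needed for the choice $N_2=\mathcal{O}(\sqrt{N_1})$. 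Your outline gives only qualitative convergence; the quantitative rate you allude to in your last paragraph is exactly what the paper imports from \cite{LevinSaff}, not from the classical Runge argument you sketched. So your proof is fine for the stated theorem, but be aware that the application in Section~\ref{sec3-4} requires the stronger exponential-rate statement, which your Riemann-sum discretisation and pole-pushing do not directly deliver.
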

\vspace{-.5cm}
\begin{figure}[htbp]
\centerline{\includegraphics[width=7cm]{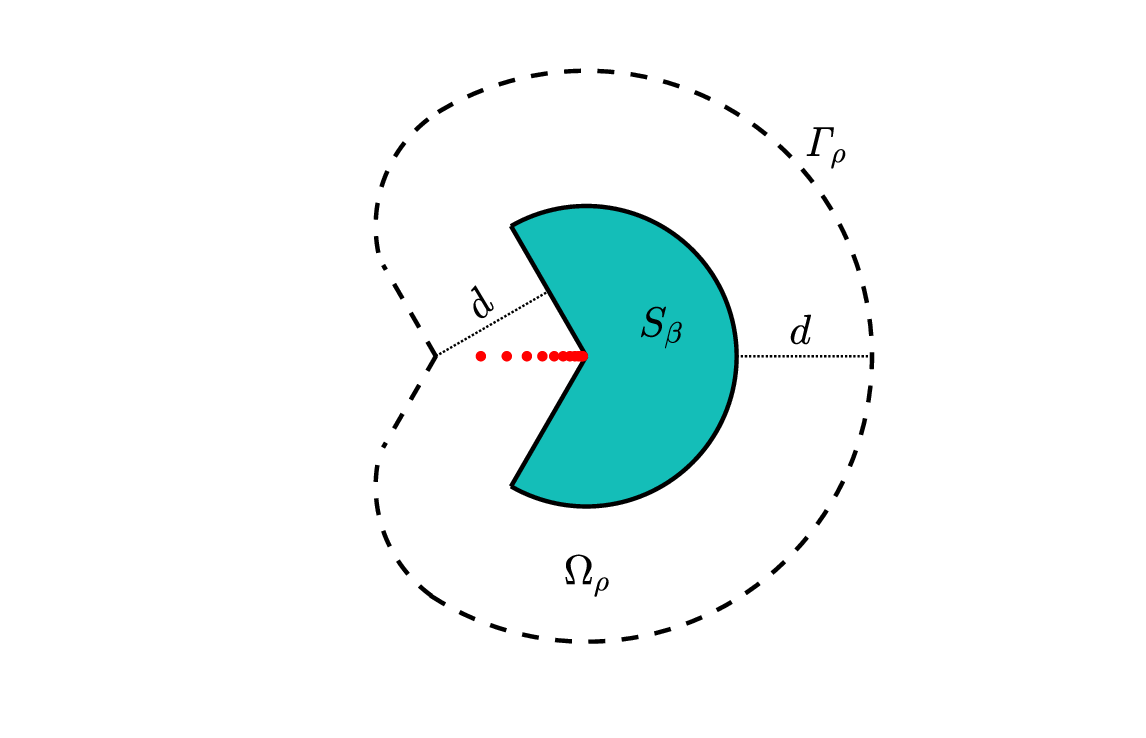}}
\vspace{-0.55cm}
\caption{A holomorphic domain $\Omega_{\rho}$ of $r^{(l)}_2(z)$ covering $S_{\beta}$.}
\label{Omega}
\end{figure}

It is worthy of noting from \eqref{eq:ECrat1C} that $p_j<-C$ for $N_1<j\le N_t$ and $r^{(l)}_2(z)$ is analytic in a fixed neighborhood $\Omega_\rho$  of $S_\beta$
$$
\Omega_\rho=\left\{z:\, {\rm dist}(z,S_\beta)\le d\right\},\quad 0<d=\left\{\begin{array}{ll}
\min\left\{\frac{1}{2},\frac{C}{2}\right\},&0\le \beta\le 1\\
\min\left\{\frac{1}{2},\frac{C}{2}\right\}\sin\frac{\beta}{2}\pi,&1<\beta<2\end{array}\right.
$$
 independent of $N_t$, $N_1$ and $p_j$ for $N_1<j\le N_t$ (see {\sc Figure} \ref{Omega}).

A straightforward verification shows that $p_j$ is outside $\Omega_\rho$ for $j\ge N_1$ and $|z|\le |z-z_0|+|z_0|\le d+1\le \frac{3}{2}$,
where $z_0\in S_\beta$ satisfies $|z-z_0|={\rm dist}(z,S_\beta)$ for $z\in \Omega_\rho$. Then it implies
$|z-s_k|\le s_k+\frac{3}{2}$ and
$$\max\limits_{z\in  \Omega_\rho}\prod\limits_{k=1}^{\ell}|z-s_k|\le\prod\limits_{k=1}^{\ell}\left(s_k+\frac{3}{2}\right)
=\prod\limits_{k=1}^{\ell}\left[\delta+2+\frac{1}{2}\cos{\frac{(2k-1)\pi}{2\ell}}\right]
\le(\delta+2)^{\ell}
$$
for $z\in \Omega_\rho$, and
$$
\left|Ce^{\frac{1}{\alpha}(jh-T)}+z\right|\ge \left|Ce^{\frac{1}{\alpha}(jh-T)}+z_0\right|-|z-z_0|\ge Ce^{\frac{1}{\alpha}(jh-T)} \varkappa(\beta)-d
\ge \frac{1}{2}Ce^{\frac{1}{\alpha}(jh-T)} \varkappa(\beta)
$$
by \eqref{eq:est2}.
Subsequently it is revealed that
for $N_1< j\le N_t-l$ that
\begin{align*}
\left|\frac{zC^{\alpha}(jh-T)^le^{jh-T}}{Ce^{\frac{1}{\alpha}(jh-T)}+z}
\left(\prod\limits_{k=1}^{\ell}\frac{z-s_k}{Ce^{\frac{1}{\alpha}(jh-T)}+s_k}\right)\right|
\le \frac{3(\kappa T)^le^{-\frac{1}{\kappa}(jh-T)}(\delta+2)^{\ell}}{C^{\ell+1-\alpha}\varkappa(\beta)}.
 \end{align*}
Thus, it yields  that
\allowdisplaybreaks
\begin{align}\label{eq:boundofr2}
\left|r^{(l)}_2(z)\right|=&\left|h\sum_{j=N_1+1}^{N_t}\frac{z C^\alpha(jh-T)^l e^{jh-T}}{Ce^{\frac{1}{\alpha}(jh-T)}+z}
\left(\prod\limits_{k=1}^{\ell}\frac{z-s_k}{Ce^{\frac{1}{\alpha}(jh-T)}+s_k}\right)\right|\notag\\
\le&\frac{3(\kappa T)^l(\delta+2)^{\ell}}{C^{\ell+1-\alpha}\varkappa(\beta)}\sum_{j=N_1+1}^{N_t-l}he^{-\frac{1}{\kappa}(jh-T)}+
hl\frac{3(N_th-T)^le^{-\frac{1}{\kappa}(N_th-T)}}{(\delta+2)^{-\ell}C^{\ell+1-\alpha}\varkappa(\beta)}\\
\le & \frac{3(\kappa T)^l(\delta+2)^{\ell}}{C^{\ell+1-\alpha}\varkappa(\beta)}\int_{N_1h}^{+\infty}e^{-\frac{1}{\kappa}(u-T)}\mathrm{d}u+
\frac{3\kappa^l hl(\delta+2)^{\ell}}{C^{\ell+1-\alpha}\varkappa(\beta)}\notag\\
\le&\frac{3(\delta+2)^{\ell}\kappa^{l+1} (T^l+hl)}{C^{\ell+1-\alpha}\varkappa(\beta)}\le \frac{3(\delta+2)^{\ell}\kappa^{l+1}(1+l)T^l}{C^{\ell+1-\alpha}\varkappa( \beta)}\notag
\end{align}
due to the monotonicity of $e^{-\frac{1}{\kappa}t}$ and $te^{-t}\le 1$ for $t\ge 0$ and $T=N_1h$.

Following Levin and Saff \cite[(8) and (9)]{LevinSaff}, there exists a $\rho>1$ and a polynomial $q^{(l)}_n$ such that
\begin{align}\label{Runge11}
&r^{(l)}_2(z)-q^{(l)}_n(z)=\frac{1}{2\pi i}\int_{\Gamma_\rho}\frac{\ell_{n+1}(z)}{\ell_{n+1}(t)}\frac{r^{(l)}_2(t)}{t-z}\mathrm{d}t,\quad \ell_{n+1}(z)=\prod_{k=0}^n(z-z_k),\notag\\
&\limsup_{n\rightarrow \infty}\left\|r^{(l)}_2-q^{(l)}_n\right\|_{C(S_\beta)}^{1/n}\le \limsup_{n\rightarrow \infty}\left(\max_{z\in \Omega_\rho}\left|r^{(l)}_2\right|\right)^{1/n}\frac{1}{\rho},
\end{align}
i.e., there is a positive integer $N_0$ such that $\left\|r^{(l)}_2-q^{(l)}_n\right\|_{C(S_\beta)}^{1/n}\le \left(\max_{z\in \Omega_\rho}\left|r^{(l)}_2\right|\right)^{1/n}\frac{1}{\rho}$ for $n\ge N_0$ by the definition of limsup,
where $\Gamma_\rho$ is the boundary of $\Omega_\rho$ and $\{z_k\}_{k=0}^n$ are the $n+1$ Fekete points on $\Gamma_\rho$.

Analogous to Herremans, Huybrechs and Trefethen \cite[p. 5]{Herremans2023}, there is a polynomial $q^{(l)}_{N_2}(z)$ of degree
$N_2\ge\frac{\sigma\alpha}{\log{\rho}}\sqrt{N_1}=\mathcal{O}(\sqrt{N_1})\ge N_0$
such that $\rho^{-N_2}\le e^{-\sigma\alpha\sqrt{N_1}}=e^{-T}$. Then, using \eqref{Runge11} we have
\begin{align}\label{polynomial app}
\bigg|E^{(l)}_{PA}(z)\bigg|=\bigg|r^{(l)}_2(z)-q^{(l)}_{N_2}(z)\bigg|
\le \frac{3(\delta+2)^{\ell}\kappa^{l+1} (1+l)T^l}{C^{\ell+1-\alpha}\varkappa({\beta})}e^{-T}.
\end{align}
Consequently, by denoting
$P^{(l)}_{N_2}(z)=q^{(l)}_{N_2}(z)+P^{(l)}_\ell(z)$ with $N_2\ge\ell$, we obtain
\begin{align}\label{app_for_rNt}
r^{(l)}_{N_t}(z)=r^{(l)}_{N_1}+P^{(l)}_{N_2}(z)+E^{(l)}_{PA}(z).
\end{align}

\subsection{LPs with $N_2=\mathcal{O}(\sqrt{N_1})$ for $z^{\alpha}$ and  $z^{\alpha}\log z$}\label{sec3-5} Thus, by combining
\eqref{eq:zalpha}, \eqref{eq:intC1}, \eqref{polynomial app} and \eqref{app_for_rNt} with $\ell=\lfloor\alpha\rfloor$, the LP for $z^{\alpha}$ with $z\in S_{\beta}$ is established as
\begin{align}\label{ratappforzalpha}
  z^{\alpha}
  =&\frac{\sin(\alpha\pi)}{(-1)^{\ell}\alpha\pi}
  \left[r^{(0)}_{N_t}(z)+E_Q^{(0)}(z)+E^{(0)}_T(z)\right]
  +z\mathcal{L}[z^{\alpha-1};s_1,\ldots,s_\ell]\notag\\
  =&\frac{\sin(\alpha\pi)}{(-1)^{\ell}\alpha\pi}
  \left[r^{(0)}_{N_1}(z)+P^{(0)}_{N_2}(z)+E^{(0)}_{PA}(z)+E_Q^{(0)}(z)+E^{(0)}_T(z)\right]\\
  &+z\mathcal{L}[z^{\alpha-1};s_1,\ldots,s_\ell] \notag\\
  =&\bar{r}_{N_1}(z)+\bar{P}_{N_2}(z)+\bar{E}(z)=:\bar{r}_{N}(z)+\bar{E}(z)\notag
\end{align}
with
\begin{align}
\bar{r}_{N_1}(z)=&\sum_{j=0}^{N_1}\frac{\bar{a}_j}{z-p_j},\ \bar{a}_j=\frac{hp_j|p_j|^\alpha\sin(\alpha\pi)}{\alpha\pi},\ 0\le j\le N_1,\label{eq:error1}\\
\bar{P}_{N_2}(z) =&\frac{\sin(\alpha\pi)}{(-1)^{\ell}\alpha\pi}P^{(0)}_{N_2}(z)+z\mathcal{L}[z^{\alpha-1};s_1,\ldots,s_\ell],\label{eq:error11}\\
\bar{E}(z)=&\frac{\sin(\alpha\pi)}{(-1)^{\ell}\alpha\pi}\left[
E^{(0)}_T(z)+E_Q^{(0)}(z)+E^{(0)}_{PA}(z)\right],\label{eq:error111}
\end{align}
where $\bar{P}_{N_2}(z)$ is a polynomial of degree $N_2=\mathcal{O}(\sqrt{N_1})$ and $\bar{E}(z)$ satisfies by \eqref{boundforEl(z)} and \eqref{polynomial app} that
\begin{align}
\left|\bar{E}(z)\right|
\le&\frac{|\sin(\alpha\pi)|}{\alpha\pi}\bigg[
\frac{2\mathbb{T}_{\ell,\beta}C^{\alpha}}{\varkappa(\beta)e^T}\left(\frac{1}{\delta^{\ell}}+\frac{\kappa }{C^{\ell+1}}\right)
+\frac{3(\delta+2)^{\ell}\kappa e^{-T}}{C^{\ell+1-\alpha}\varkappa(\beta)}
+\left|E_Q^{(0)}(z)\right|\bigg]\notag\\
=&\frac{2|\sin(\alpha\pi)|\mathbb{T}_{\ell,\beta}}{\alpha\pi\delta^{\ell}\varkappa(\beta)e^TC^{-\alpha}}
+\frac{|\sin(\alpha\pi)|C^{\alpha-\ell-1}}{(\ell+1-\alpha)\pi e^{T}}\left[\frac{2\mathbb{T}_{\ell,\beta}+3(\delta+2)^{\ell}}{\varkappa(\beta)}\right]
+\frac{|\sin(\alpha\pi)|}{\alpha\pi}\left|E_Q^{(0)}(z)\right|\label{boundforEbar}\\
=&\max\left\{\frac{C^{\alpha}}{\delta^{\alpha}},1\right\}
\frac{\mathbb{T}_{\ell,\beta}\mathcal{O}(e^{-T})}{\varkappa(\beta)}
+\frac{(\delta+2)^{\ell}\mathcal{O}(e^{-T})}{\varkappa(\beta)}
+\frac{|\sin(\alpha\pi)|}{\alpha\pi}\left|E_Q^{(0)}(z)\right|\notag
\end{align}
by noticing that $\ell=\lfloor\alpha\rfloor$.

\bigskip
Moreover, from \eqref{eq:cint_log_gener_substituting}, \eqref{eq:intC1}, \eqref{polynomial app} and \eqref{app_for_rNt} with $\ell=\lceil\alpha\rceil$ we also establish the LP for $z^{\alpha}\log{z}$ with $z\in S_{\beta}$
\allowdisplaybreaks
\begin{align}\label{ratappforzalphalog}
  z^{\alpha}\log{z}
  =&\frac{\sin(\alpha\pi)}{(-1)^\ell\alpha^2\pi}
  \left[r^{(1)}_{N_t}(z)+E_Q^{(1)}(z)+E^{(1)}_T(z)\right]
  +\bigg[\frac{\sin(\alpha\pi)\log{C}}{(-1)^\ell\alpha\pi}
  +\frac{\cos(\alpha\pi)}{(-1)^{\ell}\alpha}\bigg]\notag\\
  &\cdot\Big[r^{(0)}_{N_t}(z)+E_Q^{(0)}(z)+E^{(0)}_T(z)\Big]+z\mathcal{L}[z^{\alpha-1}\log{z};s_1,\ldots,s_\ell]\\
  =&\frac{\sin(\alpha\pi)}{(-1)^\ell\alpha^2\pi}
  \left[r^{(1)}_{N_1}(z)+P^{(1)}_{N_2}(z)+E^{(1)}_{PA}(z)+E_Q^{(1)}(z)+E^{(1)}_T(z)\right]\notag\\
  &+\left[\frac{\sin(\alpha\pi)\log{C}}{(-1)^\ell\alpha\pi}+\frac{\cos(\alpha\pi)}{(-1)^{\ell}\alpha}\right]
  \Big[r^{(0)}_{N_1}(z)+P^{(0)}_{N_2}(z)+E^{(0)}_{PA}(z)+E_Q^{(0)}(z)+E^{(0)}_T(z)\Big]\notag\\
  &+z\mathcal{L}[z^{\alpha-1}\log{z};s_1,\ldots,s_\ell]\notag\\
  =&\widetilde{r}_{N_1}(z)+\widetilde{P}_{N_2}(z)+\widetilde{E}(z)=:\widetilde{r}_{N}(z)+\widetilde{E}(z)\notag
\end{align}
with
\begin{align}\label{eq:error1_2}
&\widetilde{r}_{N_1}(z)=\sum_{j=0}^{N_1}\frac{\widetilde{a}_j}{z-p_j},\\ &\widetilde{a}_j=\frac{hp_j|p_j|^\alpha(jh-T)\sin(\alpha\pi)}{\alpha^2\pi}+
\left[\frac{\sin(\alpha\pi)\log{C}}{\alpha\pi}
+\frac{\cos(\alpha\pi)}{\alpha}\right]hp_j|p_j|^\alpha\notag
\end{align}
for $0\le j\le N_1$ according to Cauchy's residue theorem, and
\begin{align}
\widetilde{P}_{N_2}(z) =&\frac{\sin(\alpha\pi)}{(-1)^{\ell}\alpha^2\pi}P^{(1)}_{N_2}(z)
+\left[\frac{\sin(\alpha\pi)\log{C}}{(-1)^\ell\alpha\pi}
+\frac{\cos(\alpha\pi)}{(-1)^{\ell}\alpha}\right]P^{(0)}_{N_2}(z)\notag\\
&+z\mathcal{L}[z^{\alpha-1}\log{z};s_1,\ldots,s_\ell],\\
\widetilde{E}(z)=&\frac{\sin(\alpha\pi)}{(-1)^{\ell}\alpha^2\pi}\left[
E^{(1)}_T(z)+E_Q^{(1)}(z)+E^{(1)}_{PA}(z)\right]\notag\\
&+\left[\frac{\sin(\alpha\pi)\log{C}}{(-1)^\ell\alpha\pi}
+\frac{\cos(\alpha\pi)}{(-1)^{\ell}\alpha}\right]
\left[E^{(0)}_T(z)+E_Q^{(0)}(z)+E^{(0)}_{PA}(z)\right],
\end{align}
where $\widetilde{P}_{N_2}(z)$ is a polynomial of degree $N_2$ and $\widetilde{E}(z)$ is derived from \eqref{boundforEl(z)}, \eqref{polynomial app}, and $\kappa=\frac{\alpha}{\lceil\alpha\rceil+1-\alpha}$, $\widetilde{E}(z)$ satisfies
\begin{align}\label{boundforwidetildeE}
\left|\widetilde{E}(z)\right|
\le&\frac{|\sin(\alpha\pi)|}{\alpha^2\pi}\bigg[
\frac{2(1+T)\mathbb{T}_{\ell,\beta}}{C^{-\alpha}\varkappa(\beta)e^{T}}
\left(\frac{1}{\delta^{\ell}}+\frac{\kappa^2}{C^{\ell+1}}\right)
+\frac{3\kappa^{2}(\delta+2)^{\ell}2T}{C^{\ell+1-\alpha}\varkappa(\beta)}e^{-T}
+\left|E_Q^{(1)}(z)\right|\bigg]\notag\\
&+\left|\frac{\sin(\alpha\pi)\log{C}}{(-1)^\ell\alpha\pi}
+\frac{\cos(\alpha\pi)}{(-1)^{\ell}\alpha}\right|
\bigg[
\frac{2\mathbb{T}_{\ell,\beta}C^{\alpha}}{\varkappa(\beta)e^T}\left(\frac{1}{\delta^{\ell}}+\frac{\kappa }{C^{\ell+1}}\right)\\
&+\frac{3(\delta+2)^{\ell}\kappa e^{-T}}{C^{\ell+1-\alpha}\varkappa(\beta)}
+\left|E_Q^{(0)}(z)\right|\bigg]\notag\\
=&\max\left\{\frac{C^{\alpha}}{\delta^{\alpha}},1\right\}
\frac{(\alpha+1)\mathbb{T}_{\ell,\beta}}{\alpha\varkappa(\beta)}\mathcal{O}(Te^{-T})
+\frac{(\delta+2)^{\ell}}{\varkappa(\beta)}\mathcal{O}(Te^{-T})\notag\\
&+\frac{|\sin(\alpha\pi)|}{\alpha^2\pi}\left|E_Q^{(1)}(z)\right|
+\left|\frac{\sin(\alpha\pi)\log{C}}{(-1)^\ell\alpha\pi}
+\frac{\cos(\alpha\pi)}{(-1)^{\ell}\alpha}\right|\left|E_Q^{(0)}(z)\right|.\notag
\end{align}

 It is remarkable that
according to the above discussion based upon Runge's approximation theorem, choosing $N_2=\mathcal{O}(\sqrt{N_1})$ is requisite to balance the truncated errors and the approximation errors on $r^{(l)}_2(z)$. For more details see Remark \ref{rungeN} and {\sc Figure} \ref{LPsLargerN2}.

\subsection{Extension of LPs to  $g(z)z^{\alpha}$ and $g(z)z^{\alpha}\log{z}$}\label{sec3-6}
Suppose $g(z)$ is an analytic function in a neighborhood of $S_\beta$, then
similarly from the proof of Runge's approximation theorem (see Subsection \ref{sec3-4}),
$g(z)$ can be approximated by a polynomial $P^{(g)}_{N_2}(z)$ with exponential convergence rate, that is, $\|g-P^{(g)}_{N_2}\|_{C(S_{\beta})}=\mathcal{O}(e^{-T})$.

Combining with \eqref{ratappforzalpha} and \eqref{ratappforzalphalog}, we have for some coefficients $\big\{\bar{a}^{(g)}_j\big\}_{j=0}^{N_1}$ and a polynomial $\bar{Q}^{(g)}_{N_2}(z)$ of degree $N_2$ that
\begin{align}\label{LPg(z)zalpha}
g(z)z^\alpha=&\left[P^{(g)}_{N_2}(z)+\mathcal{O}(e^{-T})\right]\left[\bar{r}_{N}(z)+\bar{E}(z)\right]\notag\\
=&P^{(g)}_{N_2}(z)\bar{r}_{N}(z)+P^{(g)}_{N_2}(z)\bar{E}(z)+\left[\bar{r}_{N}(z)+\bar{E}(z)\right]\mathcal{O}(e^{-T})\\
=&\sum_{j=0}^{N_1}\frac{\bar{a}^{(g)}_j}{z-p_j}+\bar{Q}_{N_2}^{(g)}(z)+P^{(g)}_{N_2}(z)\bar{P}_{N_2}(z)+\bar{E}^{(g)}(z)
=:\bar{r}^{(g)}_{N}(z)+\bar{E}^{(g)}(z)\notag
\end{align}
with
\begin{align*}
&\bar{r}^{(g)}_{N_1}(z):=\sum_{j=0}^{N_1}\frac{\bar{a}^{(g)}_j}{z-p_j},\
\bar{P}^{(g)}_{N_2}(z):=\bar{Q}_{N_2}^{(g)}(z)+P^{(g)}_{N_2}(z)\bar{P}_{N_2}(z),\\
&\bar{E}^{(g)}(z)=:\left[g(z)+\mathcal{O}(e^{-T})\right]\bar{E}(z)+
\left[\bar{r}_{N_1}(z)+\bar{P}_{N_2}(z)\right]\mathcal{O}(e^{-T})
\end{align*}
and for some coefficients $\big\{\widetilde{a}^{(g)}_j\big\}_{j=0}^{N_1}$ and a polynomial $\widetilde{Q}^{(g)}_{N_2}(z)$ of degree $N_2$ that
\begin{align}\label{LPg(z)zalphalogz}
g(z)z^\alpha\log{z}
=&\left[P^{(g)}_{N_2}(z)+\mathcal{O}(e^{-T})\right]\left[\widetilde{r}_{N}(z)
+\widetilde{E}(z)\right]\notag\\
=&P^{(g)}_{N_2}(z)\widetilde{r}_{N_1}(z)+P^{(g)}_{N_2}(z)\widetilde{P}_{N_2}(z)+\widetilde{E}^{(g)}(z)\\
=&\sum_{j=0}^{N_1}\frac{\widetilde{a}^{(g)}_j}{z-p_j}
+\widetilde{Q}_{N_2}^{(g)}(z)+P^{(g)}_{N_2}(z)\widetilde{P}_{N_2}(z)+\widetilde{E}^{(g)}(z)
=:\widetilde{r}^{(g)}_{N}(z)+\widetilde{E}^{(g)}(z)\notag
\end{align}
with
\begin{align*}
&\widetilde{r}^{(g)}_{N_1}(z):=\sum_{j=0}^{N_1}\frac{\widetilde{a}^{(g)}_j}{z-p_j},\
\widetilde{P}^{(g)}_{N_2}(z):=\widetilde{Q}_{N_2}^{(g)}(z)+P^{(g)}_{N_2}(z)\widetilde{P}_{N_2}(z),\\
&\widetilde{E}^{(g)}(z):=\left[g(z)+\mathcal{O}(e^{-T})\right]\widetilde{E}(z)+
\left[\widetilde{r}_{N_1}(z)+\widetilde{P}_{N_2}(z)\right]\mathcal{O}(e^{-T}).
\end{align*}

Therefore, if Theorem \ref{mainthm} holds for prototype functions $z^\alpha$ and $z^\alpha\log z$, then Theorem \ref{mainthm} also holds for  $g(z)z^{\alpha}$ and $g(z)z^{\alpha}\log{z}$, respectively. In the following, we are mainly concerned with  Theorem \ref{mainthm} for $z^\alpha$ and $z^\alpha\log z$.

In particular, from LPs \eqref{ratappforzalpha}, \eqref{boundforEbar} and  \eqref{ratappforzalphalog}, \eqref{boundforwidetildeE} for  $z^\alpha$ and $z^\alpha\log z$ for $z\in S_{\beta}$,  respectively, we only need to focus on the quadrature errors on $\bar{r}_{N_t}(z)$ and $\widetilde{r}_{N_t}(z)$ by introducing Paley-Wiener type theorems, from which we may establish Theorem \ref{mainthm}.

\section{Paley-Wiener type theorems and best convergence rate for
trapezoidal rule}\label{sec:21}
The Paley-Wiener theorem, a fundamental pillar of complex and harmonic analysis, establishes a deep duality between the decay properties of a function's Fourier transform and its analytic continuation in the complex plane. This profound result finds a natural counterpart in the Poisson summation formula, which serves as a critical bridge linking discrete Fourier series expansions with their continuous Fourier transform counterparts through periodic summation.

In this section, we will extend the Paley-Wiener theorem to a horizontal strip in the complex plane and derive the optimal exponential convergence rate for the trapezoidal rule approximation over the entire real line.
Without ambiguity, we denote $f$ for any function defined on $\mathbb{R}$.

Assume the validity of the Fourier
inversion formula
\begin{align*}
f(x)=\int_{-\infty}^{+\infty}\mathfrak{F}[f](\xi)e^{2\pi i x\xi}\mathrm{d}\xi\quad {\rm if}\quad
\mathfrak{F}[f](\xi)=\int_{-\infty}^{+\infty}f(x)e^{-2\pi i x\xi}\mathrm{d}x.
\end{align*}

\allowdisplaybreaks
\begin{theorem}\label{PalWieThm}\cite[Paley-Wiener Theorem, Chapter 4]{Stein}
  Suppose $f$ is continuous and of moderate decrease on
$\mathbb{R}$. Then, $f$ has an extension to the complex plane that is entire with
$|f(z)|\le Ae^{2\pi M|z|}$  for some $A >0$, if and only if $\mathfrak{F}[f](\xi)$ is supported in the given
interval $[-M, M]$.
\end{theorem}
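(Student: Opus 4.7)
The plan is to prove the two directions separately, in the order they naturally fall: the sufficient direction is essentially a direct construction, while the necessary direction relies on contour deformation combined with a growth-control argument.

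For the sufficient direction (if $\mathfrak{F}[f]$ is supported in $[-M,M]$, then $f$ extends to an entire function with $|f(z)|\le Ae^{2\pi M|z|}$), I would simply use the Fourier inversion formula to write
\begin{align*}
  F(z)=\int_{-M}^{M}\mathfrak{F}[f](\xi)e^{2\pi i z\xi}\,\mathrm{d}\xi.
\end{align*}
Since the integration is over a compact set and the integrand is holomorphic in $z$ for each fixed $\xi$, differentiation under the integral sign (justified by dominated convergence on the compact support) shows that $F$ is entire and agrees with $f$ on $\mathbb{R}$. The bound $|F(z)|\le\|\mathfrak{F}[f]\|_{L^{1}(-M,M)}\cdot e^{2\pi M|\mathrm{Im}\, z|}\le A e^{2\pi M|z|}$ follows at once from $|e^{2\pi iz\xi}|\le e^{2\pi M|\mathrm{Im}\, z|}$ on $[-M,M]$.

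For the necessary direction, fix $\xi>M$ and aim to show $\mathfrak{F}[f](\xi)=0$ (the case $\xi<-M$ is symmetric, handled by shifting into the upper half plane). The strategy is contour deformation: apply Cauchy's theorem to the entire function $f(z)e^{-2\pi iz\xi}$ on a rectangular contour with vertices $\pm R$ and $\pm R-iT$ for large $R,T>0$. This gives
\begin{align*}
  \int_{-R}^{R}f(x)e^{-2\pi ix\xi}\,\mathrm{d}x=\int_{-R}^{R}f(x-iT)e^{-2\pi i(x-iT)\xi}\,\mathrm{d}x+\mathcal{V}_{R,T},
\end{align*}
where $\mathcal{V}_{R,T}$ collects the contributions from the two vertical segments. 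If both vertical terms can be shown to vanish as $R\to\infty$, passing $R\to\infty$ yields $\mathfrak{F}[f](\xi)=e^{-2\pi T\xi}\int_{\mathbb{R}}f(x-iT)e^{-2\pi ix\xi}\,\mathrm{d}x$; taking $T\to\infty$ and using the decay factor $e^{-2\pi T(\xi-M)}$ together with the growth bound then forces $\mathfrak{F}[f](\xi)=0$.

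The main obstacle is precisely the control of the vertical terms and of the shifted horizontal integral, because the naive estimate $|f(\pm R+it)|\le Ae^{2\pi M(R+T)}$ diverges in $R$; the hypothesis of moderate decrease only gives decay of $f$ along the real axis. To overcome this, I expect to invoke a Phragm\'en--Lindel\"of type argument in the lower half plane, interpolating the pointwise bound $|f(x)|\le A/(1+|x|^{\tau})$ on $\mathbb{R}$ against the exponential-type bound $|f(z)|\le Ae^{2\pi M|z|}$, to produce a refined estimate of the form $|f(x-iy)|\le C(1+|x|)^{-\tau}e^{2\pi My}$ valid on every horizontal line. With such a uniform strip estimate in hand, the vertical contribution is controlled by $\mathcal{O}(R^{-\tau})$ and the horizontal contribution is controlled by $e^{-2\pi T(\xi-M)}$, so both vanish in the indicated limits and $\mathfrak{F}[f](\xi)=0$ follows, which is exactly the desired support condition.
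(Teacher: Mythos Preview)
The paper does not prove this theorem at all; it is stated purely as a citation from Stein--Shakarchi \cite[Chapter~4]{Stein} and serves only as background motivation before the paper develops its own extensions (\cref{PW2} and \cref{PW}). There is therefore no ``paper's own proof'' to compare against.

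That said, your sketch follows the classical route. The sufficient direction is correct as written. For the necessary direction, your contour-shift strategy is the right one, but the refined estimate $|f(x-iy)|\le C(1+|x|)^{-\tau}e^{2\pi My}$ you hope to extract from Phragm\'en--Lindel\"of is stronger than what the standard sector version delivers directly; Stein's actual argument sidesteps this by introducing a regularizing factor (of the type $e^{-\epsilon z^{2}}$ or $e^{-2\pi\epsilon|z|}$ along suitable rays) to force decay on the vertical sides, shifting the contour, and then letting $\epsilon\to 0$. If you want to make your version rigorous without that trick, you would need a Phragm\'en--Lindel\"of variant that propagates polynomial decay from the boundary into the half-plane, which is available but requires a separate lemma.
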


The Paley-Wiener theorem specifically states that if a function bounded by $Ae^{2\pi M|z|}$ is entire and then its Fourier transform is supported on a specific interval. If $f$ is not entire on  $\mathbb{C}$ but is holomorphic in the horizontal strip
\begin{align}\label{strip}
\Xi_a=\{z\in \mathbb{C}:\, |\Im(z)| < a\}\quad (a>0)
\end{align}
and
\begin{align}\label{unicon}
|f(x + iy)|\le \frac{A}{1+x^\tau}
\end{align}
for all $x\in \mathbb{R}$ and $|y| < a$ (letting $\mathcal{F}_a$ denote the set of all functions that satisfy \eqref{strip} and \eqref{unicon}), then it follows

\begin{theorem}\label{PalWie} \cite[Theorem 2.1, Chapter 4]{Stein}
If $f$ belongs to the class $\mathcal{F}_a$, then its Fourier transform
$\left|\mathfrak{F}[f](\xi)\right| \le  Be^{-2\pi b|\xi|}$ for some constant $B$ and any $0 \le b < a$.
\end{theorem}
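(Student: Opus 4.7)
The plan is to prove the decay estimate by shifting the contour of integration in the Fourier transform from the real axis to a horizontal line $\mathbb{R} \mp ib$ (with the sign chosen according to the sign of $\xi$), exploiting both the analyticity of $f$ in the strip $\Xi_a$ and the uniform moderate decrease hypothesis \cref{unicon}. Fix $0 \le b < a$ and, for definiteness, consider $\xi > 0$. I would start from
\begin{equation*}
\mathfrak{F}[f](\xi)=\int_{-\infty}^{+\infty}f(x)e^{-2\pi i x\xi}\mathrm{d}x,
\end{equation*}
and aim to show that this equals $\int_{-\infty}^{+\infty}f(x-ib)e^{-2\pi i(x-ib)\xi}\mathrm{d}x$, which pulls out the desired factor $e^{-2\pi b\xi}$.

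To justify the contour shift I would apply Cauchy's theorem to the rectangular contour $\mathcal{R}_R$ with vertices $\pm R$ and $\pm R - ib$. Since $f(z)e^{-2\pi i z\xi}$ is holomorphic in $\Xi_a \supset \mathcal{R}_R$, the integral along $\partial\mathcal{R}_R$ vanishes. The top and bottom sides give the two integrals to be compared, so it only remains to show the two vertical sides go to zero as $R\to\infty$. On the right side, parametrising by $z = R - iy$, $y\in[0,b]$, the hypothesis \cref{unicon} gives $|f(R-iy)| \le A/(1+R^\tau)$, while $|e^{-2\pi i(R-iy)\xi}| = e^{-2\pi y\xi} \le 1$, so the integral is bounded by $Ab/(1+R^\tau) \to 0$; the left side is handled identically. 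Consequently,
\begin{equation*}
\mathfrak{F}[f](\xi)=e^{-2\pi b\xi}\int_{-\infty}^{+\infty}f(x-ib)e^{-2\pi i x\xi}\mathrm{d}x,
\end{equation*}
and taking absolute values together with \cref{unicon} yields $|\mathfrak{F}[f](\xi)| \le Be^{-2\pi b\xi}$ with $B = A\int_{-\infty}^{+\infty}(1+x^\tau)^{-1}\mathrm{d}x$, which is finite for $\tau>1$.

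For $\xi < 0$ the analogous argument shifts the contour upward to $\mathbb{R} + ib$, producing $|\mathfrak{F}[f](\xi)| \le Be^{2\pi b\xi}$, and the two cases combine into $|\mathfrak{F}[f](\xi)| \le Be^{-2\pi b|\xi|}$. The case $\xi=0$ is trivial from the moderate decrease bound on the real line. The main obstacle is really only the control of the vertical segments in the rectangular contour; this is routine once one notes that $\tau>1$ makes $A/(1+R^\tau)$ integrable on a segment of fixed length $b$ and that $|e^{-2\pi i z \xi}|$ stays uniformly bounded by $e^{2\pi a|\xi|}$ for $z$ in the strip. No other subtlety arises, since the hypothesis $b<a$ keeps the shifted contour strictly inside $\Xi_a$ where \cref{unicon} is available; the boundary case $b=a$ would require more care but is explicitly excluded.
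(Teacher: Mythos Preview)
Your proof is correct and follows exactly the standard contour-shifting argument from Stein's book, which is where this theorem is cited from; the paper itself does not supply a proof of \cref{PalWie} but merely quotes it as background before developing its own refinements in \cref{PW2} and \cref{PW}. One cosmetic remark: the integrability constant $B = A\int_{-\infty}^{+\infty}(1+x^\tau)^{-1}\,\mathrm{d}x$ tacitly reads the denominator in \cref{unicon} as $1+|x|^\tau$, which is the intended meaning.
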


The constant $b$ in the inequality
 $|\mathfrak{F}[f](\xi)| \le  Be^{-2\pi b|\xi|}$ cannot be replaced  by $a$. For instance, let $f(x)=\frac{2x}{(1+x^2)^2}\in \mathcal{F}_1$, then
 $$
 |\mathfrak{F}[f](\xi)|=\left|-\int_{-\infty}^{+\infty}e^{-2\pi i x\xi}\mathrm{d}\Big(\frac{1}{1+x^2}\Big)\right|=2\pi|\xi|\left|\int_{-\infty}^{+\infty}\frac{e^{-2\pi i x\xi}}{1+x^2}\mathrm{d}x\right|= 2\pi^2|\xi| e^{-2\pi|\xi|}
 $$
 (see Example \ref{ex1}).

\bigskip
It is of particular interest to determine under what conditions the upper bound $Be^{-2\pi a|\xi|}$ for $\xi\in \mathbb{R}$ can be attained?  Two most cited conditions are
\begin{align}\label{integral0}
\int_{-a}^a
|f(x + i\eta)|\mathrm{d}\eta =\mathcal{O} (|x|^{-\alpha}),\quad x\rightarrow \pm \infty,
\end{align}
for some $\alpha>0$ and
\begin{align}\label{integral}
\int_{-\infty}^{+\infty}
|f(x \pm i\eta)|\mathrm{d}x <+\infty
\end{align}
uniformly for $\eta\in (-a,a)$ (cf.  Denich and Novati \cite{DNJSC2024}, Lund and Bowers \cite[Definition 2.12]{LunBow1992}), 
then it implies $\left|\mathfrak{F}[f](\xi)\right|\le Be^{-2\pi a|\xi|}$ for $\xi\in \mathbb{R}$ by applying the uniform condition \eqref{integral} (the uniform condition \eqref{integral} is weaker than \eqref{unicon}).

However, the conditions \eqref{unicon} and \eqref{integral} are too strong.  For any function holomorphic in the horizontal strip $\Xi_a$ with a pole on the boundary, it fails to satisfy \eqref{integral}. Since from \eqref{integral}, without loss of generality, assume $x_0-ia$ is a pole of $f$ on the boundary $\Xi_a$, it may hold that
\begin{align*}
\lim_{\eta\rightarrow a^{-}}
\int_{-\infty}^{+\infty}
|f(x - i\eta)|\mathrm{d}x =\int_{-\infty}^{+\infty}
|f(x - ia^{-})|\mathrm{d}x=+\infty.
\end{align*}
For example, let $f(x)=\frac{1}{1+x^2}$ with poles $z_{1,2}=\mp i$. If $f$ satisfies \eqref{integral} for $a=1$, then
\begin{align*}
\lim_{\eta\rightarrow a^{-}}
\int_{-\infty}^{+\infty}
\bigg|\frac{1}{(x - i\eta)^2+1}\bigg|\mathrm{d}x =\int_{-\infty}^{+\infty}
\bigg|\frac{1}{(x - i)^2+1}\bigg|\mathrm{d}x=\int_{-\infty}^{+\infty}\frac{\mathrm{d}x}{|x|\sqrt{x^2+4}}=\infty.
\end{align*}

Next we will consider the attainability of the upper bound $Be^{-2\pi a|\xi|}$ in \eqref{PalWie} based on the multiplicities of the poles on the boundary of $\Xi_a$,
and the convergence rate for the trapezoidal rule approximation.
\begin{theorem}\label{PW2}
Suppose $|\mathfrak{F}[f](\xi)$ exists for $\xi \in \mathbb{R}$, and $f$ is holomorphic in the horizontal strip
$\Xi_0=\{z\in \mathbb{C}:\, |\Im(z)| < a_0\}$ for some $a_0>a>0$ except for finite poles $z_1,\ldots,z_m$, where $\Im(z)$ denotes the imaginary part of $z$. Let $a=\min_{1\le k\le m}|\Im(z_k)|$ be satisfied by $z_{k_1},\ldots,z_{k_n}$ ($k_j\le m$), that is, $|\Im(z_{k_1})|=\cdots=|\Im(z_{k_n})|=a$, and their orders are $j_{k_1},\ldots,j_{k_n}$ respectively and $m_0=\max\{j_{k_1},\ldots,j_{k_n}\}$.
If
\begin{align}\label{condition1_paley-wienner}
\lim_{x\rightarrow \infty}\int_{-a_0}^{a_0}
|f(x + i\eta)|\mathrm{d}\eta =0
\end{align}
and
\begin{align}\label{condition2_paley-wienner}
B^{\pm}_0:=\int_{-\infty}^{+\infty}|f(x \pm ia_0)|\mathrm{d}x <+\infty
\end{align}
hold, then
\begin{align}\label{asymFT}
|\mathfrak{F}[f](\xi)|\le  B(|\xi|^{m_0-1}+1)e^{-2\pi a|\xi|}
\end{align}
for some constant $B$.
In addition, for each $h>0$ it holds
\begin{align}\label{PossionSum2}
E(h)=&\bigg|\int_{ \mathbb{R}}f(x)\mathrm{d}x-h\sum_{n\not=0}f\left(nh\right)\bigg|
\le\sum_{n\not=0}\bigg|\mathfrak{F}[f]\left(\frac{n}{h}\right)\bigg|\notag\\
\le& \frac{2B}{e^{\frac{2a\pi}{h}}-1}+\widetilde{B}\max\left\{\frac{1}{h^{m_0-1}},h\right\}e^{-\frac{2a\pi}{h}}
\end{align}
for some constant $\widetilde{B}$.

Moreover, suppose $f$ is continuous and of moderate decrease on
$\mathbb{R}$ and $\mathfrak{F}[f](\xi)$ satisfies the decay condition
$$|\mathfrak{F}[f](\xi)| \le 2B(|\xi|^{m_0-1}+1)e^{-2\pi a|\xi|}
$$
for some constants $a,B >0$ and a positive integer $m_0$.
Then $f(x)$ is the restriction to $\mathbb{R}$ of a
function $f(z)$ holomorphic in the strip $\{z \in \mathbb{C}:\, |\Im(z)| <a \}$.
\end{theorem}

\begin{proof}
The inequality $|\mathfrak{F}[f](\xi)| \le  B(|\xi|^{m_0-1}+1)e^{-2\pi a|\xi|}$  obviously holds for $\xi=0$ with $B=|\mathfrak{F}[f](0)|$  due to that  $|\mathfrak{F}[f](\xi)$ exists for $\xi\in \mathbb{R}$. Assume first that $\xi>0$. A similar argument for $\xi<0$ can be applied.

Without loss of generality, we suppose $z_1,\ldots, z_{m_1}$ in the lower semi-strip domain,  $z_1$ is the pole with $\Im(z_1)=a$ and of order $m_0$, and $z_{m_1+1},\ldots, z_{m}$ in the upper semi-strip domain (see {\sc Figure} \ref{integral_path_uniform_paley_wienner}).
From \eqref{condition1_paley-wienner} we see that
$$\bigg|\int_{ X}^{X- ia_0}f(x)e^{-2\pi i x\xi}\mathrm{d}x\bigg|\le \int_{0}^{a_0}|f(X- i\eta)|\mathrm{d}\eta$$
tends to $0$ as $X\rightarrow  \pm \infty$, which,
together with Cauchy's integral theorem and residue theorem,  leads to
\begin{align}\label{integralcontour}
\mathfrak{F}[f](\xi)
=\int_{-\infty}^{+\infty}
f(x- ia_0)e^{-2\pi i x\xi}e^{-2\pi a_0\xi}\mathrm{d}x-2\pi i\sum_{k=1}^{m_1}\mathrm{Res}\left[f(z)e^{-2\pi iz\xi},z_k\right].
\end{align}

\begin{figure}[htbp]
\vspace{-.3cm}
\centerline{\includegraphics[width=13cm]{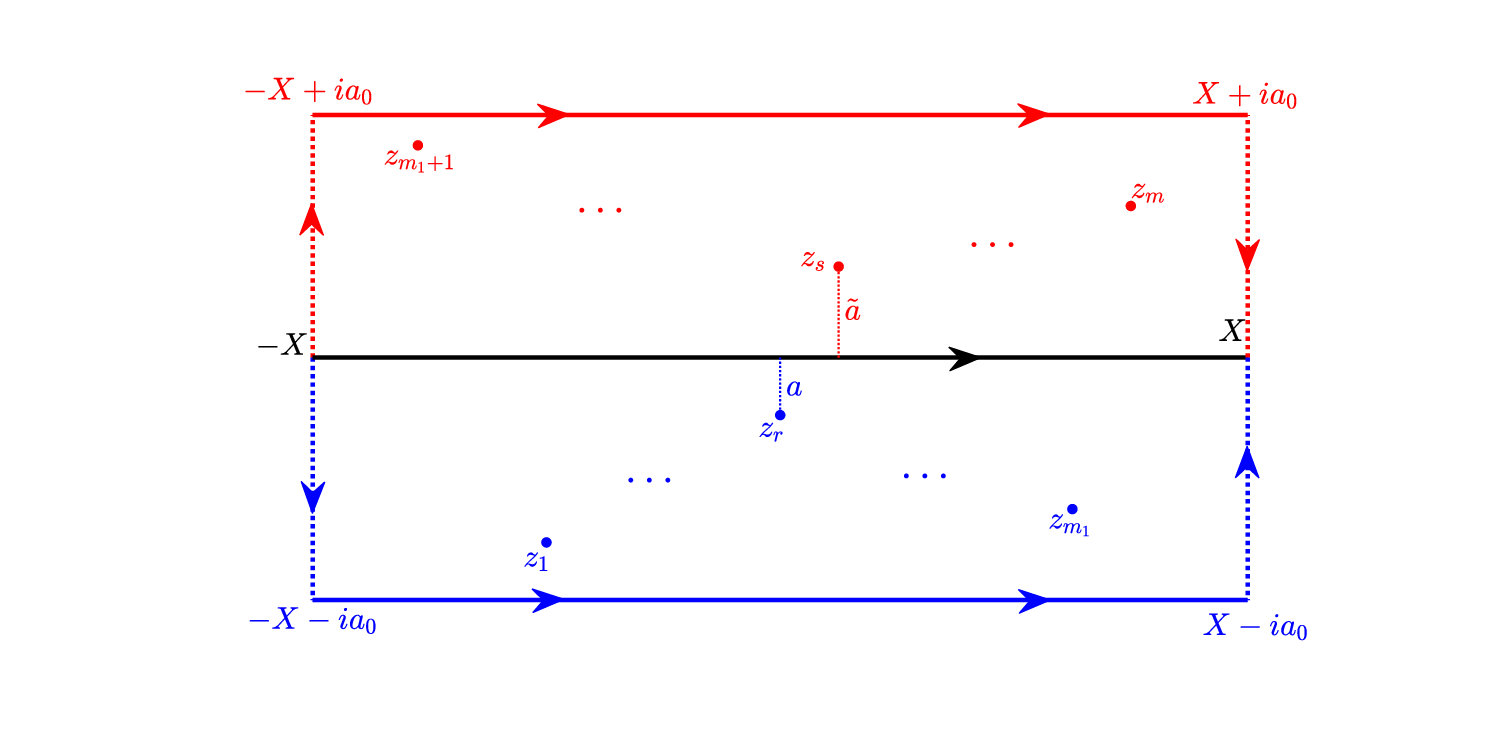}}\vspace{-.8cm}
\caption{The integrand $f(z)$ is holomorphic in the strip domain $\{z\in\mathbb{C}:\ |\Im(z)|\le a_0\}$ except for the poles $z_1,\ldots,z_{m}$.
}\label{integral_path_uniform_paley_wienner}
\end{figure}

Note that
\begin{align*}
\left|\int_{-\infty}^{+\infty}
f(x - ia_0)e^{-2\pi i x\xi}e^{-2\pi a_0\xi}\mathrm{d}x\right|
\le e^{-2\pi a_0\xi}\int_{-\infty}^{+\infty}
|f(x - ia_0)|\mathrm{d}x=B^{-}_0e^{-2\pi a_0\xi}
\end{align*}
and
\begin{align*}
&\mathrm{Res}\left[f(z)e^{-2\pi iz\xi},z_1\right]=\lim_{z\rightarrow z_1}\frac{1}{(m_0-1)!}
\frac{\mathrm{d}^{m_0-1}}{\mathrm{d}z^{m_0-1}}\left[(z-z_1)^{m_0}f(z)e^{-2\pi iz\xi}\right]\\
=&\frac{1}{(m_0-1)!}\sum_{j=0}^{m_0-1}\binom{m_0-1}{j}(-2\pi i \xi)^je^{-2\pi z_1 i \xi}\lim_{z\rightarrow z_1}\frac{\mathrm{d}^{m_0-j-1}}{\mathrm{d}z^{m_0-j-1}}\left[(z-z_1)^{m_0}f(z)\right]\notag
\end{align*}
with $\binom{m_0-1}{j}=\frac{(m_0-1)!}{j!(m_0-j-1)!}$.
It is easy to verify that $\lim_{z\rightarrow z_1}\frac{\mathrm{d}^{m_0-1-j}}{\mathrm{d}z^{m_0-1-j}}\left[(z-z_1)^{m_0}f(z)\right]$ is well-defined since $z_1$ is a pole of order $m_0$,
then by $|e^{-2\pi i z_1\xi}|=e^{-2\pi  a\xi}$ it derives
\begin{align*}
\big|\mathrm{Res}\left[f(z)e^{-2\pi iz\xi},z_1\right]\big|\le& \frac{e^{-2\pi  a\xi}}{(m_0-1)!}\sum_{j=0}^{m_0-1}\binom{m_0-1}{j}(2\pi\xi)^j\\
&\cdot \bigg|\lim_{z\rightarrow z_1}\frac{\mathrm{d}^{m_0-j-1}}{\mathrm{d}z^{m_0-j-1}}\left[(z-z_1)^{m_0}f(z)\right]\bigg|
\end{align*}
and consequently $\bigg|\mathrm{Res}\left[f(z)e^{-2\pi iz\xi},z_1\right]\bigg|=\mathcal{O}\left[(\xi^{m_0-1}+1)e^{-2\pi a\xi}\right]$.

Suppose $z_k$ ($1<k\le m_1$) is of order $j_k$ and $a\le|\Im(z_k)|<a_0$.
Analogously we have
\begin{align}\label{eq:res3}
\bigg|\mathrm{Res}\left[f(z)e^{-2\pi iz\xi},z_k\right]\bigg|=\mathcal{O}\left[(\xi^{j_k-1}+1)e^{-2\pi |\Im(z_{k})|\xi}\right],
\end{align}
which directly yields that
\begin{align}\label{eq:res2}
\bigg|\mathrm{Res}\left[f(z)e^{-2\pi iz\xi},z_k\right]\bigg|=\mathcal{O}\left[(\xi^{m_0-1}+1)e^{-2\pi a\xi}\right]
\end{align}
if $|\Im(z_k)|= a$. While for $|\Im(z_k)|> a$, it is established by the uniform boundedness of $\xi^{j_{k}-m_0}e^{-2\pi (|\Im(z_{k})|-a)\xi}$ for $\xi\ge 1$ that
\begin{align*}
\xi^{j_{k}-1}e^{-2\pi |\Im(z_{k})|\xi}=\mathcal{O}(\xi^{m_0-1}e^{-2\pi a\xi}),
\end{align*}
then from \eqref{eq:res3}, \eqref{eq:res2} still holds
 for all $\xi> 0$. These together  imply that the estimate $|\mathfrak{F}[f](\xi)|\le  \underline{B}(|\xi|^{m_0-1}+1)e^{-2\pi a|\xi|}$ in  \eqref{asymFT}  holds for $\xi>0$ and some constant $\underline{B}$.

Shifting the real line up by $a_0$ we can show $|\mathfrak{F}[f](\xi)| \le  \overline{B}(|\xi|^{m_0-1}+1)e^{-2\pi a|\xi|}$ \eqref{asymFT} for $\xi<0$ and some constant $\overline{B}$,
which allows us to finish the proof with $B=\max\{|\mathfrak{F}[f](0)|,\underline{B},\overline{B}\}$.

Inequality \eqref{PossionSum2}
  follows from Poisson summation formula
  \begin{align}\label{poisum}
 \int_{-\infty}^{+\infty}f(x)   \mathrm{d}x-h\sum_{n=-\infty}^{+\infty}f\left(nh\right)
=-\sum_{n\not=0}\mathfrak{F}[f]\left(\frac{n}{h}\right)\mbox{\quad(\cite[(1.3.18)]{Stenger})}
 \end{align}
and
\allowdisplaybreaks
  \begin{align}\label{eq:trap}
\sum_{n\not=0}\bigg|\mathfrak{F}[f]\left(\frac{n}{h}\right)\bigg|
 \le & 2 B\sum_{n=1}^{+\infty}
 \left[\left(\frac{n}{h}\right)^{m_0-1}+1\right]e^{-\frac{2 n\pi}{h} a}\notag\\
=&\frac{2 Be^{-\frac{2\pi}{h} a}}{h^{m_0-1}}\sum_{n=0}^{+\infty}
 n^{m_0-1}\left(e^{-\frac{2\pi}{h} a}\right)^n+\frac{2B}{e^{\frac{2a\pi}{h}}-1}.
\end{align}
The generating function for the sequence of powers \eqref{eq:trap} is given by the Eulerian polynomial:
 $$\sum_{n=1}^{+\infty}n^kz^n=\frac{zA_k(z)}{(1-z)^{k+1}}$$
 where $A_k(z)$ is the Eulerian polynomial. For summations with weight $n^{m_0-1}e^{-\frac{2n\pi}{h} a}$, we have
 \begin{align*}
\sum_{n=0}^{+\infty}
 n^{m_0-1}\left(e^{-\frac{2\pi}{h} a}\right)^n=\frac{1}{1-e^{-\frac{2\pi}{h} a}}=\mathcal{O}(1)\max\left\{1,\frac{h}{2\pi a}\right\},\quad m_0=1,
  \end{align*}
 \begin{align*}
 \sum_{n=0}^{+\infty}
 n^{m_0-1}\left(e^{-\frac{2\pi}{h} a}\right)^n=&\frac{e^{-\frac{2\pi}{h} a}}{\left(1-e^{-\frac{2\pi}{h} a}\right)^{m_0}}\sum_{k=0}^{m_0-2}\langle\begin{array}{c}m_0-2\\
 k\end{array}\rangle e^{-\frac{2k\pi}{h} a}
 =\mathcal{O}(1)\max\left\{1,\left(\frac{h}{2\pi a}\right)^{m_0}\right\}
 \end{align*}
for $m_0\ge 2$, where $\langle\begin{array}{c}m_0-2\\
 k\end{array}\rangle$ is the Eulerian number (see \cite{Hirzebruch}). These together lead to   \eqref{PossionSum2}.

Finally,
from the condition that  $f$ is continuous and of moderate decrease on
$\mathbb{R}$ and $\mathfrak{F}[f](\xi)$, we see that $\mathfrak{F}[f](\xi)$ is continuous for $\xi\in \mathbb{R}$. Then following \cite[Theorem 5.4, Chapter 2 and Theorem 3.1, Chapter 4]{Stein} we directly attain the desired result that function $f(z)$ is holomorphic in the strip for arbitrary $0<b<a$.
\end{proof}

The following result confirms that the upper bound on $\mathfrak{F}[f]$ in Theorem \ref{PalWie} is achievable via a direct application of
Theorem \ref{PW2}.
\begin{corollary}\label{PW}
Suppose $\mathfrak{F}[f](\xi)$ exists for $\xi \in \mathbb{R}$, and $f$ is holomorphic in the horizontal strip
$\Xi_0=\{z\in \mathbb{C}:\, |\Im(z)| < a_0\}$ for some $a_0>a>0$ except for finite poles $z_1,\ldots,z_m$, where $a=\min_{1\le k\le m}|\Im(z_k)|$.
If the poles $z_k$ with $|\Im(z_k)|=a$ are simple,
and both of \eqref{condition1_paley-wienner} and \eqref{condition2_paley-wienner} hold,
then $|\mathfrak{F}[f](\xi)| \le  Be^{-2\pi a|\xi|}$ for some constant $B$.
In addition, for each $h>0$ it holds that
\begin{align}\label{PossionSum}
E(h)=\left|\int_{ \mathbb{R}}f(x)\mathrm{d}x-h\sum_{n\not=0}f\left(nh\right)\right|
\le&\sum_{n\not=0}\bigg|\mathfrak{F}[f]\left(\frac{n}{h}\right)\bigg|\le \frac{2B}{e^{\frac{2a\pi}{h}}-1}.
\end{align}

In particular, if all the poles $z_1,\ldots,z_m$ are simple, the constant $B$ in $|\mathfrak{F}[f](\xi)| \le  Be^{-2\pi a|\xi|}$ and \eqref{PossionSum}  can be replaced by
\begin{align}\label{PoissonB}
B=\max \{B_0^-,B_0^+\}+2\pi \sum_{l=1}^{m}\Big|\mathrm{Res}\left[f(z),z_k\right]\Big|
\end{align}
with $B_0^{\pm}$ defined in \eqref{condition2_paley-wienner}.
\end{corollary}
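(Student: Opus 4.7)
The plan is to derive the corollary as a direct specialization of \cref{PW2} to the case $m_0 = 1$, supplemented by a careful residue bookkeeping when the Poisson constant is made explicit. The hypothesis that every pole $z_k$ with $|\Im(z_k)| = a$ is simple means that each $j_{k_i} = 1$, so the parameter $m_0 = \max\{j_{k_1},\ldots,j_{k_n}\}$ appearing in \cref{PW2} equals $1$. Higher-order poles are permitted to lie strictly inside $\Xi_0$, but their residue contributions $\xi^{j_k-1}e^{-2\pi|\Im(z_k)|\xi}$ decay strictly faster than $e^{-2\pi a|\xi|}$ and are therefore harmlessly absorbed, exactly as in the final paragraph of the proof of \cref{PW2}.

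Applying \cref{PW2} with $m_0 = 1$ yields $|\mathfrak{F}[f](\xi)| \le B(|\xi|^{0} + 1)e^{-2\pi a|\xi|}$, and relabeling the constant (absorbing the factor $2$ into $B$) gives the first claim $|\mathfrak{F}[f](\xi)| \le B e^{-2\pi a|\xi|}$. For the Poisson-type sum \cref{PossionSum}, rather than invoking the more elaborate bound \cref{PossionSum2}, I would feed the simple exponential estimate into a plain geometric series:
\begin{equation*}
\sum_{n \neq 0}\bigl|\mathfrak{F}[f](n/h)\bigr| \;\le\; 2\sum_{n=1}^{\infty} B e^{-2\pi a n/h} \;=\; \frac{2B\, e^{-2\pi a/h}}{1 - e^{-2\pi a/h}} \;=\; \frac{2B}{e^{2\pi a/h} - 1},
\end{equation*}
which is exactly \cref{PossionSum}. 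The simplification is possible here because the polynomial factor $(|\xi|^{m_0-1}+1)$ collapses to a constant when $m_0 = 1$, so the auxiliary term $\widetilde{B}(\max\{1/h,1/(2\pi a)\})^{m_0-1}e^{-2\pi a/h}$ in \cref{PossionSum2} is no longer needed.

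To obtain the explicit constant \cref{PoissonB} under the stronger hypothesis that every pole in $\Xi_0$ is simple, I would re-enter the proof of \cref{PW2} instead of citing it. For $\xi > 0$, shift the contour down to the line $\Im(z) = -a_0$ as in \cref{integralcontour}:
\begin{equation*}
\mathfrak{F}[f](\xi) \;=\; e^{-2\pi a_0 \xi}\!\int_{-\infty}^{+\infty}\! f(x - i a_0)\, e^{-2\pi i x\xi}\, \mathrm{d}x \;-\; 2\pi i\sum_{k=1}^{m_1} \mathrm{Res}\bigl[f(z)\, e^{-2\pi i z\xi},\, z_k\bigr].
\end{equation*}
Simplicity of $z_k$ gives $\mathrm{Res}[f(z)e^{-2\pi iz\xi}, z_k] = \mathrm{Res}[f(z), z_k]\, e^{-2\pi i z_k \xi}$, and since each such $z_k$ lies in the lower half of the strip we have $\Im(z_k) \le -a$ and $|e^{-2\pi i z_k\xi}| = e^{2\pi\Im(z_k)\xi} \le e^{-2\pi a\xi}$. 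Using $a_0 > a$ on the line integral produces
\begin{equation*}
|\mathfrak{F}[f](\xi)| \;\le\; \Bigl(B_0^- + 2\pi\sum_{k=1}^{m_1}|\mathrm{Res}[f(z),z_k]|\Bigr) e^{-2\pi a\xi},\qquad \xi > 0.
\end{equation*}
A mirror argument for $\xi < 0$, shifting the contour upward by $a_0$, yields the companion bound involving $B_0^+$ and the poles in the upper half strip. Taking the maximum of the two integral constants and bounding each half-sum of residue moduli by the full sum furnishes \cref{PoissonB}, which then feeds through the geometric-series calculation above to give the same explicit $B$ in \cref{PossionSum}.

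I do not anticipate a serious obstacle: the combinatorial analysis captured by $m_0$ has already been carried out in \cref{PW2}, so the present argument reduces to a residue computation plus sign bookkeeping. The only delicate point is verifying that every pole in the lower (resp.\ upper) half of the strip satisfies $\Im(z_k) \le -a$ (resp.\ $\Im(z_k) \ge a$), but this is immediate from the definition $a = \min_{1\le k\le m}|\Im(z_k)|$.
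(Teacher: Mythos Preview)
Your proposal is correct and follows essentially the same approach as the paper's proof: both invoke \cref{PW2} with $m_0=1$ for the main bound and the Poisson sum, and both re-enter the contour-shift formula \cref{integralcontour} together with the simple-pole residue identity $\mathrm{Res}[f(z)e^{-2\pi iz\xi},z_k]=e^{-2\pi iz_k\xi}\mathrm{Res}[f(z),z_k]$ to extract the explicit constant \cref{PoissonB}. The paper's version is somewhat terser, citing \cref{PW2} directly for \cref{PossionSum} rather than rewriting the geometric series, but the substance is identical.
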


\begin{proof}
The estimates $|\mathfrak{F}[f](\xi)| \le  Be^{-2\pi a|\xi|}$ and \eqref{PossionSum} are direct consequences of Theorem \ref{PW2} with $m_0=1$.

Furthermore, if $z_1,\ldots,z_m$ are simple poles then it implies
\begin{align}\label{eq:res4}
\left|\mathrm{Res}\left[f(z)e^{-2\pi iz|\xi|},z_k\right]\right|=&\left|\lim_{z\rightarrow z_k}(z-z_k)f(z)e^{-2\pi iz|\xi|}\right|=\left|e^{-2\pi iz_k|\xi|}
\mathrm{Res}\left[f(z),z_k\right]\right|\notag\\
\le &e^{-2\pi a|\xi|}\big|\mathrm{Res}\left[f(z),z_k\right]\big|,\quad k=1,2,\ldots,m.
\end{align}
As a result, combining \eqref{integralcontour} with the proof of Theorem \ref{PW2}, we obtain \eqref{PoissonB}.
\end{proof}

\begin{example}\label{ex1}
Consider the function $f(x)=\frac{1}{1+x^2}$, which has poles at $x=\pm i$. From Corollary \ref{PW}, we choose $a_0>a=1$. By Cauchy's integralformula \eqref{integralcontour}  it follows that
\begin{align*}
\left|\mathfrak{F}[f](\xi)\right|&=
\bigg|2 i\pi \mathrm{Res}\left[f(z)e^{-2\pi iz\xi},-i{\rm sgn(\xi)}\right] -e^{-2a_0\pi|\xi|}\int_{-\infty}^{+\infty}\frac{e^{-2\pi ix\xi}\mathrm{d}x}{1+(x-ia_0{\rm sgn(\xi)})^2}\bigg|.
\end{align*}
From the inequality $\big|1+(x-ia_0{\rm sgn(\xi)})^2\big|\ge x^2+a_0^2-1$, we obtain
\begin{align*}
e^{-2a_0\pi|\xi|}\int_{-\infty}^{+\infty}\bigg|\frac{e^{-2\pi ix\xi}}{1+(x-ia_0{\rm sgn(\xi)})^2}\bigg|\mathrm{d}x\le
e^{-2a_0\pi|\xi|}\int_{-\infty}^{+\infty}\frac{\mathrm{d}x}{x^2+a_0^2-1}=\frac{\pi e^{-2a_0\pi|\xi|}}{\sqrt{a_0^2-1}}.
\end{align*}
Combining these results, we have
\begin{align*}
\pi e^{-2\pi|\xi|}-\frac{\pi e^{-2a_0\pi|\xi|}}{\sqrt{a_0^2-1}}\le \left|\mathfrak{F}[f](\xi)\right|\le \pi e^{-2\pi|\xi|}+\frac{\pi e^{-2a_0\pi|\xi|}}{\sqrt{a_0^2-1}}.
\end{align*}
Since $\mathfrak{F}[f](0)=\pi$, it implies
\begin{align*}
1-\frac{e^{-2(a_0-1)\pi|\xi|}}{\sqrt{a_0^2-1}}\le \left|\mathfrak{F}[f](\xi)\right|\pi^{-1} e^{2\pi|\xi|}\le 1+\frac{e^{-2(a_0-1)\pi|\xi|}}{\sqrt{a_0^2-1}}.
\end{align*}
Taking the limit $a_0\rightarrow +\infty$, we conclude  $\left|\mathfrak{F}[f](\xi)\right|= \pi e^{-2\pi|\xi|}$.
\end{example}

\begin{example}\label{ex2}
Let $f(x)=\frac{1}{(x^2+16\pi^2)(e^x+1)}$. Its
 poles are $z=\pm 4\pi i$ and $z_k=i(2k-1)\pi$, $k=0,\pm 1,\ldots$. We may choose $a_0=2\pi>a=\pi$, then analogous to \eqref{integralcontour}, it follows
\begin{align*}
 \left|\mathfrak{F}[f](\xi)\right|=& \left|\int_{-\infty}^{\infty}\frac{e^{-2\pi ix\xi}{\rm d}x}{(x^2+16\pi^2)(e^x+1)}\right|\\
 \le& \int_{-\infty}^{+\infty}\frac{e^{-4\pi^2|\xi|}{\rm d}x}{\big|(x-2i\pi)^2+16\pi^2\big|}+\bigg|2 i\pi \mathrm{Res}\left[f(z)e^{-2\pi iz\xi},-i\pi{\rm sgn(\xi)}\right]\bigg|\\ \le& \int_{-\infty}^{+\infty}\frac{e^{-4\pi^2|\xi|}{\rm d}x}{x^2+12\pi^2}+\frac{2e^{-2\pi^2|\xi|}}{15\pi}
 <\frac{1}{2} e^{-2\pi a|\xi|}.
\end{align*}
While for  $f(x)=\frac{1}{(x+i)^2}$, $\mathfrak{F}[f](\xi)$ can be estimated for $\xi>0$ and $a_0=2>a=1$ as
\begin{align*}
4\pi^2\xi e^{-2\pi\xi}-\pi e^{-4\pi \xi}
\le\bigg|\mathfrak{F}[f](\xi)\bigg|
=&\bigg|e^{-4\pi \xi}\int_{-\infty}^{+\infty}
\frac{e^{-2\pi i x\xi}}{(x-i)^2}\mathrm{d}x+4\pi^2\xi e^{-2\pi\xi}\bigg|\\
\le& 4\pi^2(\xi+1) e^{-2\pi\xi},
\end{align*}
which also validates the estimate in Theorem \ref{PW2},
where we used $$\int_{-\infty}^{+\infty}\frac{\mathrm{d}x}{|(x-i)^2|}= \int_{-\infty}^{+\infty}\frac{\mathrm{d}x}{1+x^2}=\pi.$$
\end{example}

\begin{figure}[htp]
\centerline{\includegraphics[width=12cm]{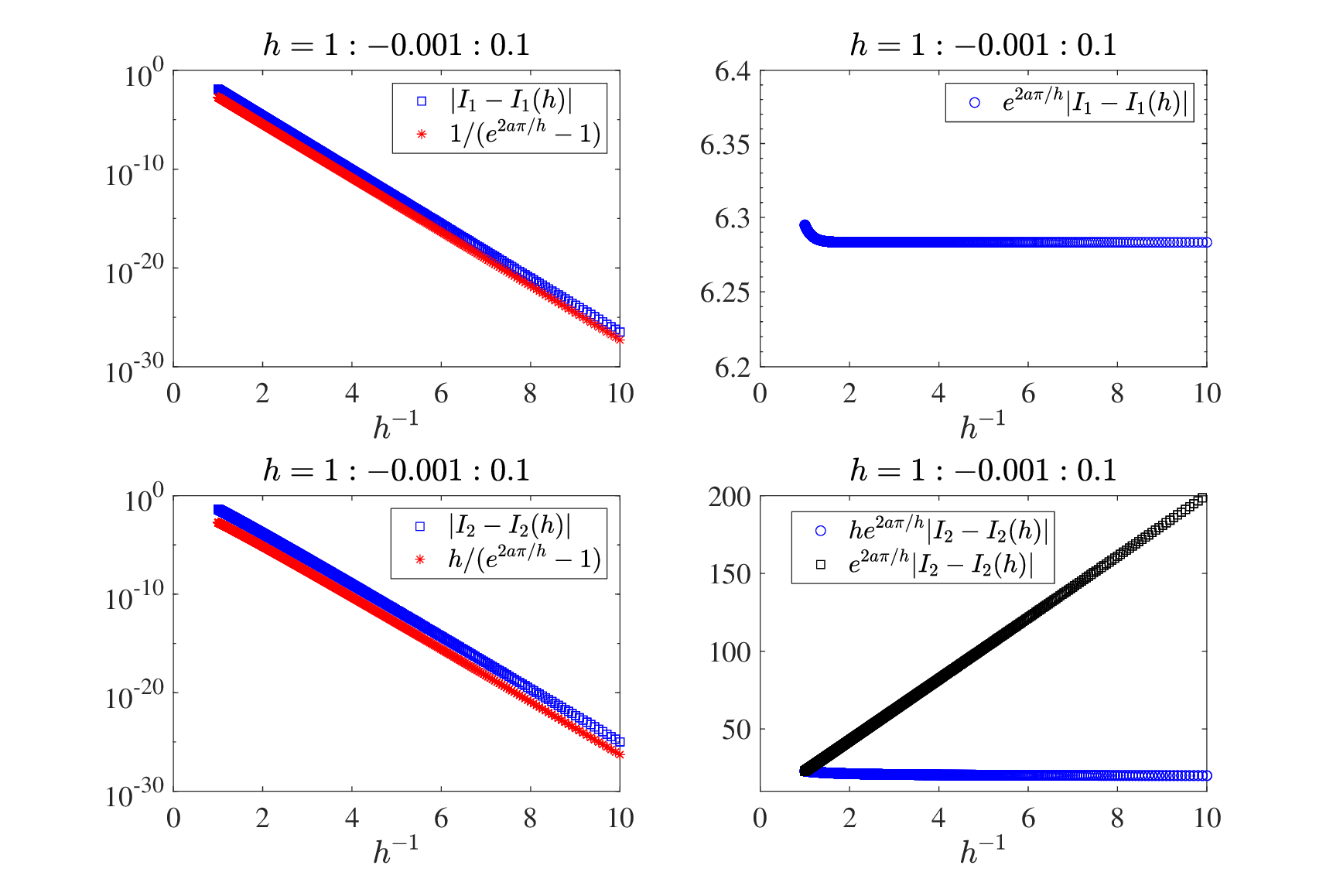}}
\caption{$I_k=\int_{-\infty}^{+\infty}\frac{\mathrm{d}x}{(1+x^2)^k}=\frac{\pi}{k}$ approximated by the trapezoidal rules for $a=1$ and $k=1,2$, respectively.}
\label{Trapezoidalerr}
\end{figure}

In the following,  we will illustrate the tight convergence rate in Theorem \ref{PW2} for the
trapezoidal rule approximation over the whole real line.

\begin{example}\label{ex3}
Let $I_1=\int_{-\infty}^{+\infty}\frac{1}{1+x^2}\mathrm{d}x=\pi$ and
$I_2=\int_{-\infty}^{+\infty}\frac{\mathrm{d}x}{(1+x^2)^2}=\frac{\pi}{2}$ be approximated by the trapezoidal rules \eqref{PossionSum2}, in which $x=\pm i$ are
$k$-th order poles of the integrand $\frac{1}{(1+x^2)^k}$, with $k$ taking values $1$ or $2$ and $a=|\Im(\pm i)|=1$. As demonstrated in \cite[Exercise 6, p. 128]{Stein},
this implies
\begin{align}\label{eq:traperr01}
I_1(h)=h\sum_{n=-\infty}^{+\infty}\frac{1}{1+n^2h^2}=\pi\coth\left(\frac{\pi}{h}\right).
\end{align}
Differentiating \eqref{eq:traperr01} with respect to $h$ (while accounting for the equation) gives
\begin{align}\label{eq:traperr02}
I_2(h)=h\sum_{n=-\infty}^{+\infty}\frac{1}{(1+n^2h^2)^2}=\frac{\pi^2}{2h}\left[\coth^2\left(\frac{\pi}{h}\right)-1\right]+\frac{\pi}{2}\coth\left(\frac{\pi}{h}\right).
\end{align}
Combining \eqref{eq:traperr01} and \eqref{eq:traperr02} leads to the conclusion that
$$
E_1(h)=I_1-I_1(h)=-\frac{2\pi}{e^{\frac{2\pi a}{h}}-1},\quad E_2(h)=I_2-I_2(h)=-\frac{h^{-1}}{e^{\frac{2\pi a}{h}}-1}\left[\frac{2\pi^2e^{\frac{2\pi a}{h}}}{e^{\frac{2\pi a}{h}}-1}+\pi h\right],
$$
which demonstrates the tight convergence rate bound \eqref{PossionSum2} in Theorem \ref{PW2} as shown in {\sc Figure} \ref{Trapezoidalerr} numerically.
\end{example}

\begin{remark}\label{remark4.1}
In particular, if $f$ is entire, $\mathfrak{F}[f](\xi)$ exists and $f$ satisfies \eqref{condition1_paley-wienner} and \eqref{condition2_paley-wienner}, from the proof of Theorem \ref{PW2}, we see that
$$
E(h)=\frac{\max \{B_0^-,B_0^+\}}{e^{\frac{2\pi a_0}{h}}-1}.
$$
Moreover, according to the Paley-Wiener Theorem (Theorem \ref{PalWieThm}), if the Fourier transform $\mathfrak{F}[f](\xi)$
is supported on the interval $[-M,M]$, then from Poisson summation formula
  \begin{align*}
 \int_{-\infty}^{+\infty}f(x)   \mathrm{d}x-h\sum_{n=-\infty}^{+\infty}f\left(nh\right)
=-\sum_{n\not=0}\mathfrak{F}[f]\left(\frac{n}{h}\right)
 \end{align*}
it follows that $E(h)=0$ whenever $\frac{1}{h}>M$.
\end{remark}
\begin{example}\label{example4}
Let $f(x)=\frac{\sin(2\pi x)}{2\pi x}$ be the sinc function. Then its integral on the real line is Dirichlet integral $I=\int_{-\infty}^{\infty}\frac{\sin(2\pi x)}{2\pi x}{\mathrm{d}}x$, equal to $1/2$, and its quadrature by rectangle rule
\begin{align*}
I(h)=&\sum_{n=-\infty}^{\infty}\frac{\sin(2n\pi h)}{2n\pi }=h+\frac{1}{\pi}\sum_{n=1}^{\infty}\frac{\sin(n\cdot2\pi h)}{n}
=h+\frac{1}{\pi}St(2\pi h),
\end{align*}
where $St(x)$ is the $2\pi$-periodic sawtooth function with $St(x)=\frac{\pi-x}{2},\ x\in(0,2\pi)$ and $St(2n\pi)=0$ for integer $n$. Thus,
$I(h)=\frac{1}{2}=I$, that is, $E(h)=0$ holds for $0<h<1$, while for $h=\lfloor h\rfloor+(h)\ge 1$ it yields
$$
I(h)=h+\frac{1}{\pi}St(2\pi h)=h+\frac{1}{\pi}St(2\pi (h))=\left\{\begin{array}{ll}\frac{1}{2}+\lfloor{h}\rfloor,& \mbox{$h$ is not a positive integer}\\
h,&\mbox{$h$ is a positive integer}\end{array}\right.
\not=\frac{1}{2},
$$
 which is also validated directly by Remark \ref{remark4.1}, since its Fourier transform is a gate function
\begin{align*}
\mathfrak{F}[f](\xi)=\left\{\begin{array}{ll}
\frac{1}{2},&|\xi|< 1,\\
\frac{1}{4},&|\xi|=1,\\
0,&{\rm otherwise}.
\end{array}\right.
\end{align*}
This can also be demonstrated by Poisson summation formula \eqref{poisum}
$$
I-I(1)=-2\mathfrak{F}[f](1)=-\frac{1}{2},\quad I-I(2.5)=-2\mathfrak{F}[f]\left(\frac{1}{2.5}\right)-2\mathfrak{F}[f]\left(\frac{2}{2.5}\right)=-2.
$$

\end{example}

\section{Quadrature errors for the rational approximations}\label{sec:4}
The crucial point in the analysis of the  quadrature errors on $r^{(l)}_{N_t}(z)$ \eqref{eq:boundofr2} in \eqref{boundforEbar} and \eqref{boundforwidetildeE}  is to utilize Poisson summation formula (cf. \cite[(10.6-21)]{Henrici}, \cite[Theorem 1.3.1]{Stenger} and
\cite[Sect. 5]{Trefethen2014SIREV}) based upon Corollary \ref{PW} to estimate the quadrature errors $E_Q^{(l)}$ ($l=0,1$)  of the the composite rectangular rules for the integrals in \eqref{eq:intC1}.

Define for $l=0,1$ that
\begin{align}
f^{(l)}(u,z)=&\frac{zC^{\alpha}(u-T)^le^{u-T}}
{Ce^{\frac{1}{\alpha}(u-T)}+z}
\left(\prod_{k=1}^{\ell}\frac{z-s_k}{Ce^{\frac{1}{\alpha}(u-T)}+s_k}\right),\label{eq:func}\\
I^{(l)}(z)=&\int_0^{N_th}f^{(l)}(u,z)\mathrm{d}u,\quad h=\frac{\sigma\alpha}{\sqrt{N_1}},\label{eq:quadraturec}\\
E_Q^{(l)}(z)=&I^{(l)}(z)-h\sum_{k=0}^{N_t}f^{(l)}(kh,z)=I^{(l)}(z)- r^{(l)}_{N_t}(z).\label{eq:quadraqll}
\end{align}

In the following, we shall show that the quadrature errors satisfy  uniformly for $z\in S_\beta$  that
\begin{align}\label{eq:quaderrs}
E_Q^{(l)}(z)=\left\{\begin{array}{ll}
\mathcal{O}(T^le^{-T}),&\sigma\le \sigma_{\rm opt},\\
\mathcal{O}(e^{-\pi\eta\sqrt{(2-\beta)N\alpha}}),&\sigma> \sigma_{\rm opt},
\end{array}\right. \quad T=\sigma\alpha\sqrt{N_1},\quad l=0,1.
\end{align}

Along the way on
rectangular rule for integrals over the real line \cite[Sect. 5]{Trefethen2014SIREV},
it is decisive to introduce Poisson summation formula (cf. \cite[(10.6-21)]{Henrici} and \cite[Theorem 1.3.1]{Stenger}).

\begin{theorem}\cite[Theorem 1.3.1]{Stenger}\label{StengerPossionFormula}
Let $w\in L^2(\mathbb{R})$ and let $w$ and its Fourier transform
$\mathfrak{F}[w](\xi)=\int_{-\infty}^{\infty}w(u)e^{-2\pi iu\xi}\mathrm{d}u$ for $\xi$ and $u$ in $\mathbb{R}$, satisfy the
conditions
$$
w(u)=\lim_{t\rightarrow 0^+}\frac{w(u-t)+w(u+t)}{2},\quad \mathfrak{F}[w](\xi)=\lim_{t\rightarrow 0^+}\frac{\mathfrak{F}[w](\xi-t)+\mathfrak{F}[w](\xi+t)}{2}.
$$
Then, for all $\hbar > 0$,
\begin{align}\label{Possionsummationformula}
\hbar\sum_{n=-\infty}^{+\infty}w(n\hbar)e^{2\pi in\hbar u}= \sum_{n=-\infty}^{+\infty}\mathfrak{F}[w]\left(\frac{n}{\hbar}+u\right).
\end{align}
\end{theorem}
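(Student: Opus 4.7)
The plan is to interpret both sides of \cref{Possionsummationformula} as the values at $u$ of a single $1/\hbar$-periodic function, and then match Fourier coefficients. First I would set $G(u):=\sum_{n\in\mathbb{Z}}\mathfrak{F}[w](n/\hbar+u)$, i.e.\ the right-hand side. Since $w,\mathfrak{F}[w]\in L^{2}(\mathbb{R})$, the defining series converges absolutely almost everywhere, and $G$ is manifestly $1/\hbar$-periodic because shifting $u\mapsto u+1/\hbar$ merely re-indexes the sum by $n\mapsto n-1$.

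The next step would be to compute the Fourier coefficients of $G$ on the fundamental period $[0,1/\hbar]$. Writing formally $G(u)\sim\sum_{k}\tilde c_{k}e^{2\pi ik\hbar u}$ with $\tilde c_{k}=\hbar\int_{0}^{1/\hbar}G(u)e^{-2\pi ik\hbar u}\,du$, I would unfold the sum defining $G$, interchange summation and integration (justified by Fubini from the $L^{1}$-on-a-period control provided by $\mathfrak{F}[w]\in L^{2}$ and a truncation argument), and substitute $v=n/\hbar+u$ inside each integral. The phase factors $e^{2\pi ikn}=1$ cause the separate integrals to telescope into $\hbar\int_{-\infty}^{+\infty}\mathfrak{F}[w](v)e^{-2\pi ik\hbar v}\,dv$, which by Fourier inversion equals $\hbar\,w(\mp k\hbar)$ (the sign being fixed by the Fourier convention used in the cited Stenger reference). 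Re-indexing $n=\mp k$ then identifies the formal Fourier series of $G$ with the left-hand side $\hbar\sum_{n\in\mathbb{Z}}w(n\hbar)e^{2\pi in\hbar u}$ of \cref{Possionsummationformula}.

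What remains, and is the main obstacle, is to upgrade this formal identity of Fourier series into genuine pointwise equality at every $u\in\mathbb{R}$. The $L^{2}$ hypothesis alone only delivers mean-square convergence, so one cannot conclude that $G(u)$ equals its Fourier series at a specific $u$ without additional input. This is precisely where the two symmetric-limit conditions on $w$ and $\mathfrak{F}[w]$ enter: they guarantee that both functions assume their symmetric average values at every point, which, combined with a classical Dirichlet-Jordan pointwise convergence criterion for the Fourier series of $G$, is exactly enough to pass from $L^{2}$-convergence to the pointwise identity \cref{Possionsummationformula}. A cleaner alternative I would also consider is to first verify the formula for Schwartz-class $w$ (where absolute convergence on both sides makes the computation above entirely rigorous), and then extend by approximation: regularize $w$ via convolution with a narrow Gaussian, apply the formula to the regularization, and pass to the limit using dominated convergence together with the symmetric-limit hypotheses to pin down the value at each fixed $u$.
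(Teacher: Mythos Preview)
The paper does not give its own proof of this statement: \cref{StengerPossionFormula} is quoted verbatim from Stenger's book \cite[Theorem 1.3.1]{Stenger} and used as a black box, so there is no in-paper argument to compare against. Your sketch follows the standard route (periodize $\mathfrak{F}[w]$, compute Fourier coefficients by unfolding, then invoke a pointwise convergence criterion using the symmetric-limit hypotheses), which is essentially how the result is established in the cited reference.
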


From \eqref{Possionsummationformula} with $u=0$ and by $\mathfrak{F}[w]\left(0\right)=\int_{-\infty}^{+\infty}w(u)\mathrm{d}u$, it follows
\begin{align}\label{QuadratureErrorfor_w}
E^{w}_{Q}:=\int_{-\infty}^{+\infty}w(u)\mathrm{d}u
-\hbar \sum_{j=-\infty}^{+\infty}w(j\hbar)
=-\sum_{n\not=0}\mathfrak{F}[w]\left(\frac{n}{\hbar}\right).
\end{align}

For $z=0$, the quadrature error $E_Q^{(l)}(z)$ \eqref{eq:quadraqll} satisfies  $E_Q^{(l)}(z)=0$.
In order to attain the exponential convergence rates on $T$ of the quadrature errors \eqref{eq:quaderrs} for $0\not=z \in S_\beta$,
we now characterize  the asymptotic decay rate of the Poisson summation formula on
\begin{align}\label{eq:ffft}
\mathfrak{F}[f^{(l)}]\left(\frac{n}{h}\right)=\int_{-\infty}^{+\infty}f^{(l)}(u,z)e^{-\frac{2n\pi i u}{h}} \mathrm{d}u
\end{align}
with $h=\frac{\sigma\alpha}{\sqrt{N_1}}$, and  analyze
the quadrature error $E_Q^{(l)}$ through systematic application of Corollary \ref{PW}.

\begin{theorem}\label{Quadratrue_rat_uniform}
Let $f^{(l)}(u,z)$ be defined in \eqref{eq:func} with $u\in\mathbb{R}$ and $0\not=z\in S_\beta$.
Then the summation of the discrete Fourier transforms \eqref{eq:ffft} for all $n\neq0$ decays at an exponential rate
\begin{align}\label{eq:conclusionOfFouriersum_uniform}
\sum_{n\ne0}\left|\mathfrak{F}[f^{(l)}]\left(\frac{n}{h}\right)\right|
=&\frac{\mathcal{G}^{\alpha}\max\{1,C^{\alpha}\}\mathcal{O}(1)}
{\varkappa(\beta)\big[e^{\frac{(2-\beta)\alpha\pi^2}{h}}-1\big]}
\left(1+\kappa^{l+1}\right),
\end{align}
where the constant $\mathcal{O}(1)$ term in \eqref{eq:conclusionOfFouriersum_uniform} is independent of $n$, $h$, $z$, $\alpha$ and $\sigma$, and $\mathcal{G}=\frac{\sqrt{2}+2}{\sqrt{2}-1}=8.24264068711928\ldots$ by setting $\delta=\frac{\sqrt{2}-1}{2}$.

In particular, for the case $\beta=0$, that is, $z\in [0,1]$,
the constant $\mathcal{G}$ in \eqref{eq:conclusionOfFouriersum_uniform} can be improved to $\frac{\sqrt{2}}{\sqrt{2}-1}=3.41421356237309\ldots$ with $\delta=\frac{\sqrt{2}-1}{2}$.
\end{theorem}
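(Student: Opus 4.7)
My plan is to apply the Paley--Wiener corollary \cref{PW} to $f^{(l)}(\cdot,z)$, viewed as a meromorphic function of a complex variable $u$, for each fixed $0 \ne z \in S_\beta$, and then extract \cref{eq:conclusionOfFouriersum_uniform} from the Poisson--summation bound \cref{PossionSum}. The first step is to locate the poles of $f^{(l)}(u,z)$. These come from the simple zeros of $Ce^{(u-T)/\alpha}+z$ and of $Ce^{(u-T)/\alpha}+s_j$ for $1\le j\le\ell$; all are simple. Writing $z = xe^{\pm i\theta\pi/2}$ with $\theta\in[0,\beta]$, the $z$-poles form the $2\pi i\alpha$-periodic array with imaginary parts $\alpha\pi(\pm\theta/2+1+2\mathbb{Z})$, while the $s_j$-poles sit at heights $\alpha\pi(1+2\mathbb{Z})$. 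Hence the closest pole to $\mathbb{R}$ is a single simple $z$-pole at imaginary-part modulus $a(z):=\alpha\pi(1-\theta/2)\ge\alpha\pi(2-\beta)/2=:a$. I then choose $a_0$ strictly between $a(z)$ and the next pole level, so only poles at height $a(z)$ sit on the boundary of the strip $\Xi_0$ required by \cref{PW}.

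Next I verify hypotheses \cref{condition1_paley-wienner}--\cref{condition2_paley-wienner}. In the strip $\{|\Im u|<a_0\}$, the complex analogues of the lower bounds \cref{eq:est1} and \cref{eq:est2} remain valid, giving $|f^{(l)}(u,z)| = \mathcal{O}(|u|^l e^{-\Re u/\kappa})$ as $\Re u\to+\infty$ and $|f^{(l)}(u,z)| = \mathcal{O}(|u|^l e^{\Re u})$ as $\Re u\to-\infty$, uniformly in $\Im u$. This both makes $f^{(l)}(\cdot\pm ia_0,z)$ integrable on $\mathbb{R}$ and forces the vertical cross-sections to vanish, meeting the Corollary's hypotheses.

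The core of the proof is tracking the constant $B$ in \cref{PoissonB}. At the closest pole $u_*$, using the cancellation $\prod_{k=1}^{\ell}(z-s_k)/(s_k-z)=(-1)^\ell$ together with $\mathrm{Res}\left((Ce^{(u-T)/\alpha}+z)^{-1};u_*\right)=-\alpha/z$, a direct computation yields
\[
\bigl|\mathrm{Res}_{u_*}f^{(l)}(u,z)\bigr| = \alpha\,|z|^\alpha\,|u_*-T|^l,
\]
with $|u_*-T|\le\alpha|\log(|z|/C)|+\alpha\pi$; combined with the truncation regime of \Cref{sec:truncation_error} this inflates by at most the factor $\kappa^{l+1}$ when $l=1$. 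The remaining poles inside $\Xi_0$ (the $s_j$-poles at level $\pm\alpha\pi$ and the farther $z$-pole at level $\alpha\pi(1+\theta/2)$) contribute residues bounded by $\alpha\,\mathbb{T}_{\ell,\beta}/\prod_j s_j\le\alpha(\delta+2)^\ell/\delta^\ell$; optimizing the shift at $\delta=(\sqrt{2}-1)/2$ turns $(\delta+2)/\delta$ into $4+3\sqrt{2}$ and, since $\ell\le\alpha+1$, produces the target constant $\mathcal{G}^\alpha$ with $\mathcal{G}=(\sqrt{2}+2)/(\sqrt{2}-1)$. The boundary integrals $B_0^\pm$ of \cref{condition2_paley-wienner} are controlled by the same lower bound \cref{eq:est1}, which forces the denominator $\varkappa(\beta)^{-1}$ and, through the $C^\alpha$-type magnitude inherited from Subsection~\ref{sec:truncation_error}, injects the factor $\max\{1,C^\alpha\}$.

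Assembling these estimates gives $B\le\frac{\mathcal{O}(1)\,\mathcal{G}^\alpha\max\{1,C^\alpha\}}{\varkappa(\beta)}(1+\kappa^{l+1})$, whereupon \cref{PW} together with \cref{PossionSum} yields exactly \cref{eq:conclusionOfFouriersum_uniform}. In the case $\beta=0$, where $z\in[0,1]$ is real, the closest poles all share height $\alpha\pi$ and $\mathbb{T}_{\ell,0}$ refines to $(1+\delta)^\ell$ rather than $(2+\delta)^\ell$, so the same choice $\delta=(\sqrt{2}-1)/2$ produces the sharper $\mathcal{G}=\sqrt{2}/(\sqrt{2}-1)$. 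The main obstacle I anticipate is the residue bookkeeping: compressing every pole's contribution into the single factor $\mathcal{G}^\alpha\max\{1,C^\alpha\}/\varkappa(\beta)$ while tracking $\delta$- and $\ell$-dependence carefully enough to justify the choice $\delta=(\sqrt{2}-1)/2$, and verifying that the bound does not degrade whether $\ell=\lfloor\alpha\rfloor$ or $\ell=\lceil\alpha\rceil$.
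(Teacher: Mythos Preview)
Your overall strategy—apply \cref{PW} to $f^{(l)}(\cdot,z)$ and read off \cref{eq:conclusionOfFouriersum_uniform} from \cref{PossionSum}—is exactly the paper's approach. However, two concrete details in your bookkeeping are wrong, and they matter because they are precisely where the constant $\mathcal{G}$ comes from.

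First, your description of $a_0$ is internally inconsistent: you say you pick $a_0$ ``strictly between $a(z)$ and the next pole level,'' which would exclude the $s_j$-poles, yet two sentences later you account for ``remaining poles inside $\Xi_0$'' including those same $s_j$-poles. The paper simply takes $a_0=2\alpha\pi$, which captures exactly one $z$-pole and all $\ell$ of the $s_j$-poles in each half-strip; this fixed choice also keeps $B_0^\pm$ uniformly bounded in $z$ (your $z$-dependent $a_0$ would create a problem as $\theta\to 0$, when $a(z)\to\alpha\pi$ collides with the $s_j$-level).

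Second, and more seriously, your residue bound for the $s_j$-poles is incorrect. At $u_{0k}$ the residue is
\[
-\alpha\,z\,s_k^{\alpha-1}\Bigl(\alpha\log\tfrac{s_k}{C}-i\alpha\pi\Bigr)^{l}\prod_{v\ne k}\frac{z-s_v}{s_v-s_k},
\]
i.e.\ a Lagrange basis factor, \emph{not} $\mathbb{T}_{\ell,\beta}/\prod_j s_j$. The paper bounds $\prod_{v\ne k}|s_v-s_k|^{-1}$ via the Chebyshev identity $\prod_{v\ne k}(x_k-x_v)=(-1)^k m/(2^{m-1}\sin\frac{(2k+1)\pi}{2m})$, obtaining a contribution of order $(4\delta+6)^\ell(\delta+1)^\ell$. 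The boundary integrals $B_0^\pm$, meanwhile, contribute $\mathbb{T}_{\ell,\beta}/\delta^\ell\le\bigl(\frac{2\delta+3}{2\delta}\bigr)^\ell$. The choice $\delta=\frac{\sqrt{2}-1}{2}$ is what makes these two expressions \emph{equal}, namely $\frac{2\delta+3}{2\delta}=(4\delta+6)(\delta+1)$, and their common value is $\mathcal{G}=\frac{\sqrt{2}+2}{\sqrt{2}-1}$. Your quantity $(\delta+2)/\delta$ plays no role (and in any case equals $5+4\sqrt{2}$, not $\mathcal{G}$, at this $\delta$). The $\beta=0$ sharpening likewise comes from refining the Lagrange-basis bound and $\mathbb{T}_{\ell,0}\le\bigl(\frac{2\delta+1}{2}\bigr)^\ell$, not from $(1+\delta)^\ell$.
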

\begin{proof}
From the definition \eqref{eq:func} and inequalities in \eqref{ine:minus_u} and \eqref{ieq:positive_u}, it is easy to verify that
$f^{(l)}(\cdot,z)\in L^2(\mathbb{R})\bigcap C(\mathbb{R})$ for all fixed $z\in S_{\beta}$, since
\begin{align}\label{sqint}
\int_{-\infty}^{+\infty}\left|f^{(l)}(u,z)\right|^2\mathrm{d}u
\le&\frac{\mathbb{T}_{\ell,\beta}^{2}C^{2\alpha}}{\delta^{2\ell}\varkappa^2(\beta)}
\int_{-\infty}^{0}|t|^{2l}e^{2t}\mathrm{d}t
+\frac{\mathbb{T}_{\ell,\beta}^{2}C^{2\alpha}}{C^{2(\ell+1)}\varkappa^2(\beta)}
\int_{0}^{+\infty}\frac{t^{2l}}{e^{\frac{2t}{\kappa}}}\mathrm{d}t\notag\\
\le&\frac{\mathbb{T}_{\ell,\beta}^{2}C^{2\alpha}}{\varkappa^2(\beta)}
\left(\frac{1}{2^{l+1}\delta^{2\ell}}+\frac{\kappa^{2l+1}}{2^{l+1}C^{2\ell+2}}\right)<+\infty,\ l=0,1.
\end{align}
Then $\mathfrak{F}[f^{(l)}](\xi)$ is well-defined for $\xi\in \mathbb{R}$ \cite[p. 9]{Stenger}.

Moreover,
we can check readily analogous to \eqref{sqint} that
\begin{align*}
\int_{-\infty}^{+\infty}\left|f^{(l)}(u,z)\right|\mathrm{d}u
\le\frac{\mathbb{T}_{\ell,\beta}C^{\alpha}}{\varkappa(\beta)}
\left(\frac{1}{\delta^{\ell}}+\frac{\kappa^{l+1}}{C^{\ell+1}}\right)<+\infty,\ l=0,1
\end{align*}
holds uniformly for all $z\in S_\beta$, thus the Fourier transform
$$\mathfrak{F}\left[f^{(l)}(u,z)\right](\xi)=\int_{-\infty}^{+\infty}f^{(l)}(u,z)e^{-2\pi i\xi u}\mathrm{d}u$$
is continuous on $\mathbb{R}$ \cite[(10.6-12)-(10.6-13)]{Henrici}.
Therefore, the functions $f^{(l)}(u,z)$ and $\mathfrak{F}\left[f^{(l)}(u,z)\right]$ satisfy the conditions of Theorem \ref{StengerPossionFormula} and \cite[(10.6-12)-(10.6-13)]{Henrici}.

\bigskip
In the following, we first show that the integrand $f^{(l)}(u,z)$ also satisfies the conditions of Corollary \ref{PW} with $a_0=2\alpha\pi$.
To characterize the dependence of the decay rate of $\mathfrak{F}[f^{(l)}]\left(\frac{n}{h}\right)$ on $\alpha,\ \sigma$ and $\beta$, all the constants $B_k$ in the proof of Corollary \ref{PW}  are estimated in detail as follows.

We observe that for $z=xe^{\pm\frac{\theta\pi}{2}i}\not=0$ with $0\le \theta\le\beta<2$,
$f_{\alpha}^{(l)}(u,z^{\pm})=f_{\alpha}^{(l)}(u,xe^{\pm\frac{\theta\pi}{2}i})$ has the simple poles
\begin{equation}\label{eq:all_poles_fux_C}
u_k(z^{\pm})=T+\alpha\log{\frac{x}{C}}
+i\alpha\pi\left(2k-1\pm\frac{\theta}{2}\right),\quad k=0,\pm1,\ldots,
\end{equation}
and
\begin{equation}\label{eq:all_poles_fux_C_sl}
u_k(s_v)=T+\alpha\log{\frac{s_v}{C}}
+i\alpha\pi\left(2k-1\right),\quad k=0,\pm1,\ldots,\quad v=1,\ldots,\ell.
\end{equation}

Subsequently, we mainly focus on the case $z^+=xe^{i\frac{\theta\pi}{2}}$, and another case $z^-=xe^{-i\frac{\theta\pi}{2}}$
can be proven in the same manner.
Among the poles $\left\{u_k(z^{+})\right\}$ the first two closest to the real axis are
$u_0(z^+)=T+\alpha\log{\frac{x}{C}}
-i\alpha\pi\left(1-\frac{\theta}{2}\right)$ and $u_1(z^+)=T+\alpha\log{\frac{x}{C}}
+i\alpha\pi\left(1+\frac{\theta}{2}\right)$, and $u_0(s_v)$ and $u_1(s_v)$ in $\{u_k(s_v)\}$ are the closest to the real line and locate symmetrically.

In accordance with Corollary \ref{PW} , we may choose $a_0=2\alpha\pi$ such that
 $f^{(l)}(u,z)$ is holomorphic in the strip domain
$\left\{u\in\mathbb{C}:\ |\Im(u)|\le  a_0\right\}$
except for  the simple poles $u_0(z^+)=:u_{00}$, $u_1(z^+)=:u_{10}$, $u_0(s_k)=T+\alpha\log{\frac{s_k}{C}}
-i\alpha\pi=:u_{0k}$ and $u_1(s_k)=T+\alpha\log{\frac{s_k}{C}}
+i\alpha\pi=:u_{1k}$ for $k=1,2,\ldots,\ell$.
Thus, $f(u,z)$ is holomorphic
$$\left\{u\in\mathbb{C}:\ |\Im(u)|\le  a_0\right\}$$
except for the simple poles $\left\{u_{0k}\right\}_{k=0}^{\ell}$ and $\left\{u_{1k}\right\}_{k=0}^{\ell}$, respectively, and $$a:=|\Im(u_{00})|=\alpha\pi\left(1-\frac{\theta}{2}\right).$$

\begin{figure}[htp]\vspace{-.3cm}
\centerline{\includegraphics[width=15cm]{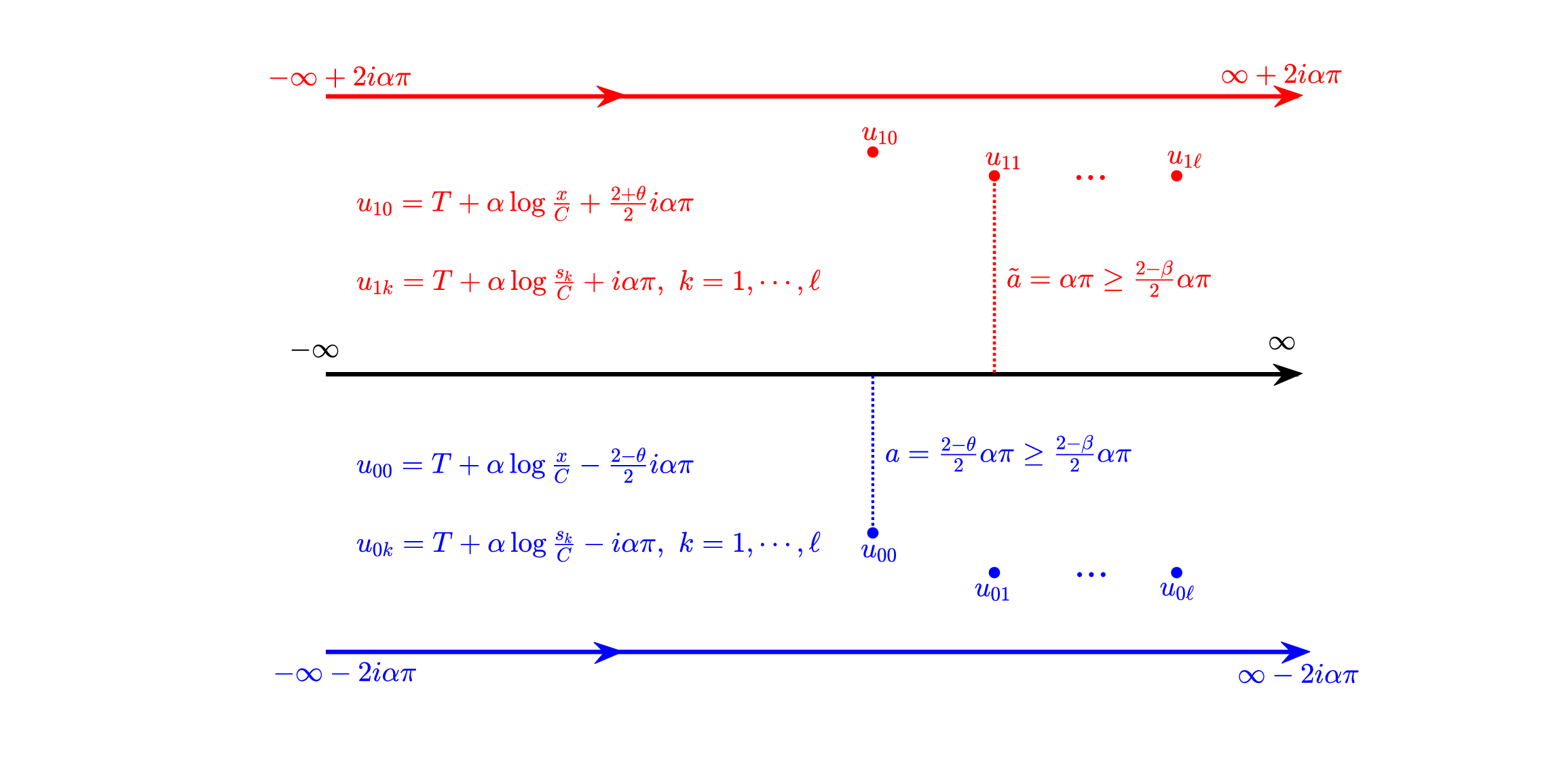}}\vspace{-.8cm}
\caption{The integrand $f^{(l)}(u,z)$ for $z=xe^{i\frac{\theta\pi}{2}}$, $0\le\theta\le\beta<2$ is holomorphic in the strip domain bounded by the horizontal  lines $\{z\in\mathbb{C}:\ \Im(z)=\mp a_0\}$ except for the simple poles $\left\{u_{0k}\right\}_{k=0}^{\ell}$ and $\left\{u_{1k}\right\}_{k=0}^{\ell}$ which are located in the lower and upper half-plane, respectively.}
\label{integral_contour_uniform}
\end{figure}

Particularly, for the line segments $u=-X\pm it$ and $X\pm it$ with $0\le t\le  a_0$,  and for $z\in S_{\beta}$ with $z\not=0$ and sufficiently large $X>0$, we have that
\allowdisplaybreaks
\begin{align}
\left|f^{(l)}(-X\pm it,z)\right|
=&\frac{|z||-X\pm it-T|^l\left|C^{\alpha}e^{-X\pm it-T}\right|}{\big|Ce^{\frac{1}{\alpha}(-X\pm it-T)}+z\big|}
\left|\left(\prod\limits_{k=1}^{\ell}\frac{z-s_k}{Ce^{\frac{1}{\alpha}(-X\pm it-T)}+s_k}\right)\right|\notag\\
\le& \frac{|z|(\sqrt{(X+T)^2+t^2})^lC^{\alpha}e^{-X-T}\mathbb{T}_{\ell,\beta}}{|z|-Ce^{\frac{1}{\alpha}(-X-T)}}
\left(\prod\limits_{k=1}^{\ell}\frac{1}{s_k-Ce^{\frac{1}{\alpha}(-X-T)}}\right)\label{eq:boundforfl-X}
\end{align}
and analogously by $s_\ell>|z|$ and $s_\ell\ge s_k$
\begin{align}
\allowdisplaybreaks
\left|f^{(l)}(X\pm it,z)\right|
\le&\frac{|z||X\pm it-T|^l\left|C^{\alpha}e^{X\pm it-T}\right|}{\big|Ce^{\frac{1}{\alpha}(X\pm it-T)}+z\big|}
\left|\prod\limits_{k=1}^{\ell}\frac{z-s_k}{Ce^{\frac{1}{\alpha}(X\pm it-T)}+s_k}\right|\notag\\
\le&\frac{|z|(\sqrt{(X-T)^2+t^2})^lC^{\alpha}e^{X-T}\mathbb{T}_{\ell,\beta}}{Ce^{\frac{1}{\alpha}(X-T)}-s_\ell}
\prod\limits_{k=1}^{\ell}\frac{1}{Ce^{\frac{1}{\alpha}(X-T)}-s_\ell}\notag\\
\le& \frac{|z|(\sqrt{(X-T)^2+t^2})^lC^{\alpha}\mathbb{T}_{\ell,\beta}}{\Big[Ce^{\frac{X-T}{(\ell+1)\kappa}}-s_\ell e^{-\frac{X-T}{\ell+1}}\Big]^{\ell+1}},\label{eq:boundforflx}
\end{align}
and both of \eqref{eq:boundforfl-X} and \eqref{eq:boundforflx} tend to zero as $X\rightarrow +\infty$ independent of $t$. Then \eqref{condition1_paley-wienner} in Theorem \ref{PW2}  is satisfied.

Furthermore, from \eqref{eq:est1} and \eqref{eq:est2} the integral of $f^{(l)}(u,z)$ over the lower and upper boundaries in \eqref{condition2_paley-wienner} can be bounded by
\allowdisplaybreaks
\begin{align}\label{boundforhorizontalline}
&B^{-\mathrm{sgn}(n)}_{0,\alpha,\sigma}
 :=\left|\int_{-\infty-i2\alpha\pi\mathrm{sgn}(n)}^{+\infty-i2\alpha\pi\mathrm{sgn}(n)}
  f^{(l)}(u,z)\mathrm{d}u\right|
  \le\int_{-\infty}^{+\infty}
  \left|f^{(l)}(t\mp i2\alpha\pi,z)\right|\mathrm{d}t\notag\\
  =&\int_{-\infty}^{+\infty}
  \Bigg|\frac{zC^{\alpha}\big(\sqrt{(t-T)^2+4\alpha^2\pi^2}\big)^l}
  {\big[Ce^{\frac{1}{\alpha}(t\mp2i\alpha\pi-T)}+z\big]e^{T-t\pm2i\alpha\pi}}
  \cdot
  \left(\prod\limits_{k=1}^{\ell}\frac{z-s_k}{Ce^{\frac{1}{\alpha}(t\mp2i\alpha\pi-T)}+s_k}\right)\Bigg|\mathrm{d}t\\
  =& \int_{-\infty}^{+\infty}
  \Bigg|\frac{xC^{\alpha}\big(\sqrt{(t-T)^2+4\alpha^2\pi^2}\big)^le^{t-T}}
  {Ce^{\frac{1}{\alpha}(t-T)}+z}\left(\prod\limits_{k=1}^{\ell}\frac{z-s_k}
  {Ce^{\frac{1}{\alpha}(t-T)}+s_k}\right)\Bigg|\mathrm{d}t\notag\\
  \le&  \int_{-\infty}^{T}\frac{\mathbb{T}_{\ell,\beta}C^{\alpha}}{\delta^{\ell}\varkappa(\beta)}\left(T-t
  +2\alpha\pi\right)^le^{t-T}\mathrm{d}t+\int_{T}^{+\infty}\frac{\mathbb{T}_{\ell,\beta}C^{\alpha-\ell-1}}{\varkappa(\beta)e^{\frac{1}{\kappa}(t-T)}}
  \left(t-T+2\alpha\pi\right)^l\mathrm{d}t\notag\\
  \le&\max\left\{\frac{C^{\alpha}}{\delta^{\ell}},1,\frac{1}{C^2}\right\}
  \frac{(2\alpha\pi+1)^l
  +\kappa(2\alpha\pi+\kappa)^l}{\varkappa(\beta)\mathbb{T}_{\ell,\beta}^{-1}}\notag\\
  =&\mathcal{O}(1)\max\left\{\frac{C^{\alpha}}{\delta^{\ell}},1\right\}
  \frac{(1+\kappa^{l+1})}{\varkappa(\beta)\mathbb{T}_{\ell,\beta}^{-1}}\notag
\end{align}
by  the definition of $\kappa=\frac{\alpha}{\ell+1-\alpha}$ with $\ell=\lceil\alpha\rceil$  for $l=1$.

From the above estimates we see that the integrand $f^{(l)}(u,z)$ satisfies the condition of Corollary \ref{PW}, and thus by \eqref{PossionSum} it follows that
\begin{align}\label{POISSION}
\sum_{n\not=0}\bigg|\mathfrak{F}[f^{(l)}]\left(\frac{n}{h}\right)\bigg|\le \frac{2B_{\alpha,\sigma}}{e^{\frac{2a\pi}{h}}-1},\quad
B_{\alpha,\sigma}=\max\left\{B^-_{\alpha,\sigma},\ B^+_{\alpha,\sigma}\right\}
\end{align}
with
\begin{align}
B^-_{\alpha,\sigma}=B^{-}_{0,\alpha,\sigma}+2\pi\sum_{k=0}^{\ell}\left|\mathrm{Res}[f^{(l)},u_{0k}]\right|,\quad
B^+_{\alpha,\sigma}=B^{+}_{0,\alpha,\sigma}+2\pi\sum_{k=0}^{\ell}\left|\mathrm{Res}[f^{(l)},u_{1k}]\right|,\label{constalphasigpositive}
\end{align}
in which $B^{-\mathrm{sgn}(n)}_{0,\alpha,\sigma}$ are bounded by \eqref{boundforhorizontalline}, and the residues can be estimated from \eqref{eq:all_poles_fux_C} and
\eqref{eq:all_poles_fux_C_sl} as follows by using $Ce^{\frac{1}{\alpha}(u_{00}-T)}=-z$
\allowdisplaybreaks
\begin{align}\label{asy00}
\left|\mathrm{Res}\left[f^{(l)}(u,z),u_{00}\right]\right|
=&\left|\lim_{u\rightarrow u_{00}}(u-u_{00})\frac{zC^{\alpha}(u-T)^le^{u-T}}
{Ce^{\frac{1}{\alpha}(u-T)}+z}
\left(\prod_{k=1}^{\ell}\frac{z-s_k}{Ce^{\frac{1}{\alpha}(u-T)}+s_k}\right)\right|\notag\\
=&C^{\alpha}\left|(u_{00}-T)^le^{u_{00}-T}\left(\prod_{k=1}^{\ell}\frac{z-s_k}{s_k-z}\right)\lim_{u\rightarrow u_{00}}\frac{u-u_{00}}
{e^{\frac{1}{\alpha}(u-u_{00})}-1}
\right|\\
=&\alpha x^{\alpha}\left|\alpha\log{\frac{x}{C}}-\frac{2-\theta}{2}i\alpha\pi\right|^l
=\mathcal{O}(1)\alpha [C_0(\alpha)]^l,\notag
\end{align}
where in the last identity in \eqref{asy00} we used
\begin{align*}
\max_{x\in (0,1]}\left|\alpha x^{\alpha}\log{\frac{x}{C}}\right|\le \left\{\begin{array}{ll}
\alpha|\log C|=\alpha\log C,&Ce^{-\frac{1}{\alpha}}\ge 1\\
\max\{\alpha|\log C|,e^{-1}C^\alpha\},&0<Ce^{-\frac{1}{\alpha}}< 1
\end{array}\right.=:C_0(\alpha).
\end{align*}

Similarly, by imposing  $Ce^{\frac{1}{\alpha}(u_{0k}-T)}=-s_k$, we obtain
\begin{align}\label{thelastellresidue}
\mathrm{Res}\left[f^{(l)}(u,z),u_{0k}\right]
=&-\alpha zs_k^{\alpha-1}\left(\alpha\log{\frac{s_k}{C}}-i\alpha\pi\right)^l
\prod_{\displaystyle \substack{v=1 \\ v \neq k}}^{\ell}\frac{z-s_v}{s_v-s_k}.
\end{align}
In particular, the summation of the residues in \eqref{thelastellresidue} satisfies
\begin{align}\label{resk}
\sum_{k=1}^{\ell}\mathrm{Res}\left[f^{(l)}(u,z),u_{0k}\right]
=&-\alpha z\sum_{k=1}^{\ell}s_k^{\alpha-2}\left(\alpha\log{\frac{s_k}{C}}-i\alpha\pi\right)^l
s_k\prod_{\displaystyle \substack{v=1 \\ v \neq k}}^{\ell}\frac{z-s_v}{s_v-s_k}.
\end{align}
By applying the following estimates $s_k^{\alpha-2}=s_k^{\ell-2}s_k^{\alpha-\ell}=\mathcal{O}(1) \max\{\delta^{\ell-2},(\delta+1)^{\ell-2}\}$ for $\delta\le s_k=\delta+\frac{1}{2}\left(1+\cos\frac{(2k-1)\pi}{2\ell}\right)\le \delta+1$ ($k=1,\ldots,\ell$), $\max\{\delta^{\ell-2},(\delta+1)^{\ell-2}\}=\mathcal{O}((\delta+1)^{\ell-2})$ for fixed $\delta>0$, and
$$\prod_{v=1}^{\ell}(1+s_v)\le \left[\frac{1}{\ell}\sum_{v=1}^{\ell}(1+s_v)\right]^\ell=\left(\delta+\frac{3}{2}\right)^{\ell},$$
$$\frac{s_k}{1+s_k}=\frac{\delta+\frac{1}{2}\left(1+\cos\frac{(2k-1)\pi}{2\ell}\right)}
{\delta+1+\frac{1}{2}\left(1+\cos\frac{(2k-1)\pi}{2\ell}\right)}\le\frac{\delta+1}{\delta+2} \le\frac{2\delta+2}{2\delta+3},
$$
and the identity
\begin{align}\label{representationofUz}
\prod_{\displaystyle \substack{k=1 \\ k \neq j}}^{\ell}(x_j-x_k)=\frac{1}{2^{\ell-1}}\frac{\mathrm{d}}{\mathrm{d}t}T_\ell(t)\bigg|_{t=x_j}=\frac{\ell}{2^{\ell-1}}\frac{\sin\frac{\ell(2j+1)\pi}{2\ell}}{\sin\frac{(2j+1)\pi}{2\ell}}
=
\frac{(-1)^j\ell}{2^{\ell-1}\sin\frac{(2j+1)\pi}{2\ell}}
\end{align}
for Chebyshev points $x_j=\cos\frac{(2j-1)\pi}{2m}$ ($j=1,\ldots,\ell$) from Mason and Handscomb \cite[Section 2.2]{Mason2003},
 the terms in \eqref{resk} can be bounded respectively by
\begin{align*}
\max_{k=1,2,\ldots,\ell}\left|\alpha z\left(\alpha\log{\frac{s_k}{C}}-i\alpha\pi\right)^ls_k^{\alpha-2}\right|
=\alpha^{l+1}(\delta+1)^{\ell-2}\mathcal{O}(1)
\end{align*}
and
\begin{align}\label{boundforproduct22}
\left\|s_k\prod_{\displaystyle \substack{v=1\\v\neq k}}^{\ell}\frac{z-s_v}{s_v-s_k}\right\|_{C(S_{\beta})}
\le& \frac{s_k}{1+s_k}\prod_{v=1}^\ell(1+s_v)\prod_{\displaystyle \substack{v=1\\v\neq k}}^{\ell}\frac{1}{s_v-s_k}\notag\\
\le& \frac{2\delta+2}{2\delta+3}\left(\delta+\frac{3}{2}\right)^{\ell}\frac{2^{2\ell-2}\big|\sin\frac{(2k+1)\pi}{2\ell}\big|}{\ell}\\
\le&\frac{1}{\ell}2^{\ell-1}\left(\delta+1\right)(2\delta+3)^{\ell-1}.\notag
\end{align}
Thus, we have
\begin{align}\label{asy0k}
\sum_{k=1}^{\ell}\left|\mathrm{Res}\left[f^{(l)}(u,z),u_{0k}\right]\right|
=\alpha^{l+1}(4\delta+6)^{\ell}(\delta+1)^{\ell}\mathcal{O}(1),
\end{align}
where the constants in $\mathcal{O}(1)$ terms \eqref{asy00} and \eqref{asy0k}  are independent of $n$, $\alpha$, $h$ and $z$, and $\ell$.

Substituting \eqref{boundforhorizontalline}, \eqref{asy00} and \eqref{asy0k} into \eqref{constalphasigpositive},
the constant $B^-_{\alpha,\sigma}$ can be evaluated by
\begin{align}\label{constantB-}
  B^-_{\alpha,\sigma}
  =&\mathcal{O}(1)\max\left\{\left[\frac{(2\delta+3)C}{2\delta}\right]^{\alpha},\frac{(2\delta+3)^{\alpha}}{2^{\alpha}}\right\}
  \frac{1
  +\kappa^{l+1}}{\varkappa(\beta)}\notag\\
  &+\mathcal{O}(1)\alpha [C_0(\alpha)]^l
  +\alpha^{l+1}(4\delta+6)^{\ell}(\delta+1)^{\ell}\mathcal{O}(1)\\
  =&\mathcal{O}(1)\max\left\{\left[\frac{(2\delta+3)C}{2\delta}\right]^{\alpha},\frac{(2\delta+3)^{\alpha}}{2^{\alpha}}\right\}
  \frac{1
  +\kappa^{l+1}}{\varkappa(\beta)}\notag\\
  &+\alpha^{l+1}(4\delta+6)^{\ell}(\delta+1)^{\ell}\mathcal{O}(1)\notag
\end{align}
 and using the estimate
\begin{align}\label{boundofTell}
\mathbb{T}_{\ell,\beta}=&\max_{z\in S_{\beta}}\prod_{k=1}^{\ell}|z-s_k|
\le\prod_{k=1}^{\ell}(1+s_k)
\le\left(\frac{2\delta+3}{2}\right)^{\ell},
\end{align}
where the term $\mathcal{O}(1)\alpha [C_0(\alpha)]^l$ in \eqref{constantB-} is absorbed in the first term in \eqref{constantB-} by the definition of $C_0(\alpha)$ and $\kappa$.

To balance the first and second terms in \eqref{constantB-}, we choose $\delta=\frac{\sqrt{2}-1}{2}$ such that $\left(\frac{2\delta+3}{2\delta}\right)^{\ell}=(4\delta+6)^{\ell}(\delta+1)^\ell:=\mathcal{G}^\ell$, therefore
\begin{align}\label{positive_DFT_decay_rat}
B^-_{\alpha,\sigma}
=\frac{\mathcal{G}^{\alpha}\max\{1,C^{\alpha}\}\mathcal{O}(1)}{\varkappa(\beta)}
\left(1+\kappa^{l+1}\right),
\end{align}
with $\mathcal{G}=\frac{\sqrt{2}+2}{\sqrt{2}-1}=8.24264068711928\ldots$.

Similarly, we can prove that \eqref{positive_DFT_decay_rat}
also holds for $B^+_{\alpha,\sigma}$ in \eqref{constalphasigpositive}. These together with \eqref{POISSION} and $a=\frac{(2-\theta)\alpha\pi}{2}\ge\frac{(2-\beta)\alpha\pi}{2}$ for arbitrary $z=xe^{\pm\frac{\theta\pi}{2}i}\not=0$ in $S_\beta$ establish that
\begin{align*}
\sum_{n\ne0}\mathfrak{F}[f^{(l)}]\Big{(}\frac{n}{h}\Big{)}
=&\frac{\mathcal{G}^{\alpha}\max\{1,C^{\alpha}\}\mathcal{O}(1)}
{\varkappa(\beta)\big[e^{\frac{(2-\beta)\alpha\pi^2}{h}}-1\big]}
\left(1+\kappa^{l+1}\right).
\end{align*}
It is clear that all the constants in $\mathcal{O}(1)$s in this proof are independent of $n$, $h$, $z$, $\alpha$ and $\sigma$.

Particularly, from \eqref{representationofUz}
the bound \eqref{boundforproduct22} for the case $\beta=0$ can be sharpened as
\begin{align*}
\bigg\|s_k\prod_{\displaystyle \substack{v=1\\v\neq k}}^{\ell}\frac{z-s_v}{s_v-s_k}\bigg\|_{C(S_{\beta})}
\le \prod_{v=1}^\ell s_v\prod_{\displaystyle \substack{v=1\\v\neq k}}^{\ell}\frac{1}{s_v-s_k}
\le 2^{\ell-2}\left(2\delta+1\right)^{\ell}.
\end{align*}
by the monotonicity of the second kind Chebyshev polynomial $U_{\ell-1}\left(2x-2\delta-1\right)$ outside of $\left[\delta,\delta+1\right]$ and its extreme point $\delta> 0$.

Analogously, the bound on $\mathbb{T}_{\ell,\beta}$ for the case $\beta=0$ can also be sharpened to
\begin{align}\label{boundofTell2}
\mathbb{T}_{\ell,\beta}
\le\prod_{k=1}^{\ell}s_k
\le\left(\frac{2\delta+1}{2}\right)^{\ell}.
\end{align}
Thus, the factor $(2\delta+3)^{\ell-1}$ in \eqref{asy0k} and \eqref{constantB-} may be shrunk to $(2\delta+1)^{\ell}$, which implies that the constant $\mathcal{G}=\frac{\sqrt{2}+2}{\sqrt{2}-1}$ in \eqref{positive_DFT_decay_rat} is improved to $\mathcal{G}=\frac{\sqrt{2}}{\sqrt{2}-1}=3.414213562373$ $09\ldots$ for the case $\beta=0$ with $\delta=\frac{\sqrt{2}-1}{2}$.

These together complete the proof of Theorem \ref{Quadratrue_rat_uniform}.
\end{proof}

Now by the Poisson summation formula \eqref{Quadratrue_rat_uniform}, the integral  \eqref{eq:intC1} and the rational approximation \eqref{eq:ECrat1CC}, the  quadrature error $E^{(l)}_{Q}(z)$ \eqref{eq:quadraqll} can be estimated uniformly by
\begin{align}\label{quadratureOfbarf}
E^{(l)}_{Q}(z)=&\int_{0}^{N_tT}f^{(l)}(u,z)\mathrm{d}u-r^{(l)}_{N_t}(z)\notag\\
=&\int_{-\infty}^{+\infty}f^{(l)}(u,z)\mathrm{d}u-E^{(l)}_T(z)-h\sum_{n=-\infty}^{+\infty}f^{(l)}(nh,z)\\
&+h\left(\sum_{n=-\infty}^{-1}+\sum_{n=N_t+1}^{+\infty}\right)f^{(l)}(nh,z)\notag\\
=&-\sum_{n\ne0}\mathfrak{F}[f^{(l)}]\big{(}\frac{n}{h}\big{)}-E^{(l)}_T(z)
+h\left(\sum_{n=-\infty}^{-1}+\sum_{n=N_t+1}^{+\infty}\right)f^{(l)}(nh,z)\notag\\
=&-\sum_{n\ne0}\mathfrak{F}[f^{(l)}]\big{(}\frac{n}{h}\big{)}-E^{(l)}_T(z)
+\mathcal{G}^{\alpha}\max\{1,C^{\alpha}\}(1+T)^l\left(1+\kappa^{l+1}\right)
\frac{\mathcal{O}(e^{-T})}{\varkappa(\beta)},\notag
\end{align}
with $\mathcal{G}=\frac{\sqrt{2}+2}{\sqrt{2}-1}$ for $\beta\in(0,2)$ and $\mathcal{G}=\frac{\sqrt{2}}{\sqrt{2}-1}$ for $\beta=0$,
where  we used the following estimate in the last identity in \eqref{quadratureOfbarf}
\begin{align*}
&\left|h\left(\sum_{n=-\infty}^{-1}+\sum_{n=N_t+1}^{+\infty}\right)f^{(l)}(nh,z)\right|\\
\le & h\left(\sum_{n=-\infty}^{-1}+\sum_{n=N_t+1}^{+\infty}\right)\frac{|z||nh-T|^lC^{\alpha}e^{nh-T}}
{\big|Ce^{\frac{1}{\alpha}(nh-T)}+z\big|}
\left|\prod\limits_{k=1}^{\ell}\frac{z-s_k}{Ce^{\frac{1}{\alpha}(nh-T)}+s_k}\right|\\
\le & h\sum_{n=-\infty}^{-1}\frac{\mathbb{T}_{\ell,\beta}|nh-T|^lC^{\alpha}e^{nh-T}}{\delta^{\ell}\varkappa(\beta)}
+h\sum_{n=N_t+1}^{+\infty}\frac{\mathbb{T}_{\ell,\beta}|nh-T|^le^{-\frac{1}{\kappa}(nh-T)}}
{\varkappa(\beta)C^{\ell+1-\alpha}}\\
\le& \int_{-\infty}^{-T}\frac{\mathbb{T}_{\ell,\beta}|t|^lC^{\alpha}e^{t}}{\delta^{\ell}\varkappa(\beta)}\mathrm{d}t
+\int_{\kappa T}^{+\infty}\frac{\mathbb{T}_{\ell,\beta}|t|^lC^{\alpha}e^{-\frac{t}{\kappa}}}
{C^{\ell+1-\alpha}\varkappa(\beta)}\mathrm{d}t\\
=&\frac{\mathbb{T}_{\ell,\beta}(T+1)^{l}C^{\alpha}}{\delta^{\ell}\varkappa(\beta)}\mathcal{O}(e^{-T})
+\frac{\mathbb{T}_{\ell,\beta}\kappa(\kappa T+\kappa)^l}{\varkappa(\beta)}\mathcal{O}(e^{-T})\\
=&\mathcal{G}^{\alpha}\max\{1,C^{\alpha}\}(1+T)^l\left(1+\kappa^{l+1}\right)
\frac{\mathcal{O}(e^{-T})}{\varkappa(\beta)}
\end{align*}
according to \eqref{eq:inequ_neg}, \eqref{eq:inequ_pos}, $N_th\ge (\kappa+1)T$ and by the monotonicities of $|t|^le^{t}$ for $t\le -T$ and $t^le^{-\frac{1}{\kappa}t}$ for $t\ge \kappa T$ and $T\ge 2$.

Hence, from \eqref{quadratureOfbarf} and \eqref{Quadratrue_rat_uniform} we have
\begin{corollary}\label{eq:quaderror}
\begin{align}\label{errquad_uniform}
E^{(l)}_Q(z)
=&\frac{\mathcal{G}^{\alpha}\max\{1,C^{\alpha}\}\mathcal{O}(1)}
{\varkappa(\beta)\left[e^{\frac{(2-\beta)\alpha\pi^2}{h}}-1\right]}
\left(1+\kappa^{l+1}\right)-E^{(l)}_T(z)\notag\\
&+\mathcal{G}^{\alpha}\max\{1,C^{\alpha}\}(1+T)^l\left(1+\kappa^{l+1}\right)
\frac{\mathcal{O}(e^{-T})}{\varkappa(\beta)},
\end{align}
and  all the constants in $\mathcal{O}$ terms are independent of $N_1$, $\alpha$, $\sigma$ and $z\in S_\beta$.
 \end{corollary}

\section{Proof of Theorem \ref{mainthm}}\label{sec:5}
Combining the uniform bounds in \eqref{errquad_uniform} for $l=0,1$
with the estimates in \eqref{boundforEbar} and \eqref{boundforwidetildeE} yields the proof of Theorem \ref{mainthm}.

{\bf Proof of  LP \eqref{eq:rat} with $N_2=\mathcal{O}(\sqrt{N_1})$ for $z^\alpha$}:
From \eqref{ratappforzalpha} and \eqref{boundforEbar}, together with \eqref{errquad_uniform}, it follows by setting $l=0$, $\ell=\lfloor\alpha\rfloor$, $\delta=\frac{\sqrt{2}-1}{2}$ and $\delta+2<\mathcal{G}$, and applying
\begin{align*}
\bigg|\frac{\sin(\alpha\pi)}{(-1)^{\ell}\alpha\pi}\kappa\bigg|=\frac{|\sin((\ell+1-\alpha)\pi)|}{(\ell+1-\alpha)\pi}\le 1, \quad \mathbb{T}_{\ell,\beta}
\le\left(\frac{2\delta+3}{2}\right)^{\ell}\le (\delta+2)^{\ell}<\mathcal{G}^\ell,\\
\frac{C^{\alpha}}{\delta^{\alpha}}\mathbb{T}_{\ell,\beta}\le \frac{C^{\alpha}}{\delta^{\alpha}}\left(\frac{2\delta+3}{2}\right)^{\alpha}\le\mathcal{G}^{\alpha}C^{\alpha}\le \mathcal{G}^{\alpha}\max\{C^\alpha,1\}, \quad C^{\ell+1-\alpha}=\mathcal{O}(1),
\end{align*}
 and using
\begin{align*}
\frac{|\sin(\alpha\pi)|}{\alpha\pi}\left|E_Q^{(0)}(z)\right|
=&\frac{|\sin(\alpha\pi)|}{\alpha\pi}\Bigg|
\frac{\mathcal{G}^{\alpha}\max\{1,C^{\alpha}\}\mathcal{O}(1)}
{\varkappa(\beta)\left[e^{\frac{(2-\beta)\alpha\pi^2}{h}}-1\right]}
(1+\kappa)-E^{(0)}_T(z)\\
&+\mathcal{G}^{\alpha}\max\{1,C^{\alpha}\}(1+\kappa)\frac{\mathcal{O}(e^{-T})}{\varkappa(\beta)}\Bigg|\\
=&\frac{\mathcal{G}^{\alpha}\max\{1,C^{\alpha}\}}{\varkappa(\beta)}
\left[\frac{\mathcal{O}(1)}{e^{\frac{(2-\beta)\pi^2\alpha}{h}}-1}+\mathcal{O}(1)e^{-T}\right],
\end{align*}
 that
\begin{align}\label{rat01}
|\bar{E}(z)|=&|z^\alpha-\bar{r}_N(z)|\notag\\
\le&\frac{\mathcal{G}^{\alpha}\max\{1,C^{\alpha}\}}{\varkappa(\beta)}\mathcal{O}(e^{-T})
+\frac{(\delta+2)^{\ell}\mathcal{O}(e^{-T})}{\varkappa(\beta)}
+\frac{|\sin(\alpha\pi)|}{\alpha\pi}\left|E_Q^{(0)}(z)\right|\\
=&\frac{\mathcal{G}^{\alpha}\max\{1,C^{\alpha}\}}{\varkappa(\beta)}
\left[\frac{\mathcal{O}(1)}{e^{\frac{(2-\beta)\pi^2\alpha}{h}}-1}+\mathcal{O}(1)e^{-T}\right]\notag\\
=&\frac{\mathcal{G}^{\alpha}\max\{1,C^{\alpha}\}}{\varkappa(\beta)}
\left[\frac{\mathcal{O}(1)}{e^{\frac{(2-\beta)\pi^2}{\sigma}\sqrt{N_1}}-1}
+\mathcal{O}(1)e^{-\alpha\sigma\sqrt{N_1}}\right]\notag\\
=&\frac{\mathcal{G}^{\alpha}\max\{1,C^{\alpha}\}}{\varkappa(\beta)}\left\{\begin{array}{ll}
\mathcal{O}(e^{-\sigma\alpha\sqrt{N_1}}),&\sigma\le \sigma_{\rm opt}\\
\frac{\mathcal{O}(1)}{e^{\pi\eta\sqrt{(2-\beta)N_1\alpha}}-1},&\sigma> \sigma_{\rm opt}
\end{array}\right.\notag\\
=&\frac{\mathcal{G}^{\alpha}\max\{1,C^{\alpha}\}}{\varkappa(\beta)}\left\{\begin{array}{ll}
\mathcal{O}(e^{-\sigma\alpha\sqrt{N}}),&\sigma\le \sigma_{\rm opt},\\
\frac{\mathcal{O}(1)}{e^{\pi\eta\sqrt{(2-\beta)N\alpha}}-1},&\sigma> \sigma_{\rm opt},
\end{array}\right.\notag
\end{align}
where we used in the last equation of \eqref{rat01} the fact
 \begin{align*}
 \sqrt{N_1}=\sqrt{N-N_2+1}=\sqrt{N}\left[1+\mathcal{O}\left(\frac{N_2}{N}\right)\right]=\sqrt{N}+\mathcal{O}(1)
 =:\sqrt{N}+c_N
 \end{align*}
with $c_N(<0)$ being uniformly bounded and independent of $N$.

\bigskip
{\bf Proof of  LP \eqref{eq:rat} with $N_2=\mathcal{O}(\sqrt{N_1})$ for $z^\alpha\log z$}: Analogously to the case of $z^\alpha$, from \eqref{ratappforzalphalog}, \eqref{boundforwidetildeE} together with \eqref{errquad_uniform}, we obtain by letting $\ell=\lceil\alpha\rceil$ that
\allowdisplaybreaks
\begin{align}\label{rat01log}
|\widetilde{E}(z)|=&|z^\alpha\log z-\widetilde{r}_N(z)|
\le\frac{(\alpha+1)\mathcal{G}^{\alpha}\max\{1,C^{\alpha}\}}{\alpha\varkappa(\beta)}\mathcal{O}(Te^{-T})
+\frac{(\delta+2)^{\ell}}{\varkappa(\beta)}\mathcal{O}(Te^{-T})\notag\\
&+\frac{|\sin(\alpha\pi)|}{\alpha^2\pi}\left|E_Q^{(1)}(z)\right|
+\left|\frac{\sin(\alpha\pi)\log{C}}{(-1)^\ell\alpha\pi}
+\frac{\cos(\alpha\pi)}{(-1)^{\ell}\alpha}\right|\left|E_Q^{(0)}(z)\right|\\
=&\frac{(\alpha+1)\mathcal{G}^{\alpha}\max\{1,C^{\alpha}\}}{\alpha\varkappa(\beta)}\left\{\begin{array}{ll}
\mathcal{O}(\sigma\sqrt{N_1}e^{-\sigma\alpha\sqrt{N_1}}),&\sigma\le \sigma_{\rm opt},\\
\frac{\mathcal{O}(1)}{e^{\pi\eta\sqrt{(2-\beta)N_1\alpha}}-1},&\sigma> \sigma_{\rm opt},
\end{array}\right.\notag\\
=&\frac{(\alpha+1)\mathcal{G}^{\alpha}\max\{1,C^{\alpha}\}}{\alpha\varkappa(\beta)}\left\{\begin{array}{ll}
\mathcal{O}(\sigma\sqrt{N}e^{-\sigma\alpha\sqrt{N}}),&\sigma\le \sigma_{\rm opt},\\
\frac{\mathcal{O}(1)}{e^{\pi\eta\sqrt{(2-\beta)N\alpha}}-1},&\sigma> \sigma_{\rm opt},\notag
\end{array}\right.
\end{align}
where we used the fact that
\begin{align*}
\frac{|\sin(\alpha\pi)|}{\alpha^2\pi}\left|E_Q^{(1)}(z)\right|
=&\frac{|\sin(\alpha\pi)|}{\alpha^2\pi}\Bigg|
\frac{\mathcal{G}^{\alpha}\max\{1,C^{\alpha}\}\mathcal{O}(1)}
{\varkappa(\beta)\big[e^{\frac{(2-\beta)\alpha\pi^2}{h}}-1\big]}
\left[\alpha^2+\kappa(2\alpha\pi+\kappa)\right]-E^{(1)}_T(z)\\
&+\mathcal{G}^{\alpha}\max\{1,C^{\alpha}\}(1+T)\left(1+\kappa^2\right)
\frac{\mathcal{O}(e^{-T})}{\varkappa(\beta)}\Bigg|\\
=&\frac{\mathcal{G}^{\alpha}\max\{1,C^{\alpha}\}\mathcal{O}(1)}
{\varkappa(\beta)\big(e^{\frac{(2-\beta)\pi^2\alpha}{h}}-1\big)}
+\frac{(\alpha+1)\mathcal{G}^{\alpha}\max\{1,C^{\alpha}\}}{\alpha\varkappa(\beta)}\mathcal{O}(Te^{-T})\\
=&\frac{(\alpha+1)\mathcal{G}^{\alpha}\max\{1,C^{\alpha}\}}{\alpha\varkappa(\beta)}
\left[\frac{\mathcal{O}(1)}{e^{\pi\eta\sqrt{(2-\beta)N\alpha}}-1}+\frac{\sigma\mathcal{O}(1) \sqrt{N_1}}{e^{\alpha\sigma\sqrt{N_1}}}\right]
\end{align*}
and similarly,
\allowdisplaybreaks
\begin{align*}
&\left|\frac{\sin(\alpha\pi)\log{C}}{(-1)^\ell\alpha\pi}
+\frac{\cos(\alpha\pi)}{(-1)^{\ell}\alpha}\right|\left|E_Q^{(0)}(z)\right|\\
=&\left|\frac{\sin(\alpha\pi)\log{C}}{(-1)^\ell\alpha\pi}
+\frac{\cos(\alpha\pi)}{(-1)^{\ell}\alpha}\right|\Bigg|
\frac{\mathcal{G}^{\alpha}\max\{1,C^{\alpha}\}\mathcal{O}(1)}
{\varkappa(\beta)\big[e^{\frac{(2-\beta)\alpha\pi^2}{h}}-1\big]}
(\alpha+\kappa)-E^{(0)}_T(z)\\
&+\mathcal{G}^{\alpha}\max\{1,C^{\alpha}\}(1+\kappa)\frac{\mathcal{O}(e^{-T})}{\varkappa(\beta)}\Bigg|\\
=&\frac{\mathcal{G}^{\alpha}\max\{1,C^{\alpha}\}\mathcal{O}(1)}
{\varkappa(\beta)\big[e^{\frac{(2-\beta)\alpha\pi^2}{h}}-1\big]}
+\frac{(\alpha+1)\mathcal{G}^{\alpha}\max\{1,C^{\alpha}\}}{\alpha\varkappa(\beta)}\mathcal{O}(e^{-T})\\
=&\frac{(\alpha+1)\mathcal{G}^{\alpha}\max\{1,C^{\alpha}\}}{\alpha\varkappa(\beta)}
\left(\frac{\mathcal{O}(1)}{e^{\pi\eta\sqrt{\alpha(2-\beta)N_1}}-1}
+\mathcal{O}(1)e^{-\alpha\sigma\sqrt{N_1}}\right).
\end{align*}

Thus, we arrive at the conclusions for the special case of $g(z)=1$
uniformly for $z\in S_\beta$.

\bigskip
{\bf Proof of  LPs \eqref{eq:rat} with $N_2=\mathcal{O}(\sqrt{N_1})$ for $z^\alpha g(z)$ and $z^\alpha g(z)\log z$}:
Note that $g(z)$ can be approximated simply by a polynomial $P^{(g)}_{N_2}(z)$ of degree $N_2=\mathcal{O}(\sqrt{N_1})$ satisfying $\|g(z)-P^{(g)}_{N_2}(z)\|_{C(S_{\beta})}=\mathcal{O}(e^{-T})$ (see Subsection \ref{sec3-4}). Both of $\bar{r}_N$ and $\widetilde{r}_N$ are uniformly bounded on $S_\beta$ from \eqref{rat01} and \eqref{rat01log}. Then we construct \eqref{eq: rate1} and \eqref{eq: rate2} by \eqref{LPg(z)zalpha} and \eqref{LPg(z)zalphalogz} that
\begin{align}
\left|g(z)z^\alpha-P^{(g)}_{N_2}(z)\bar{r}_N(z)\right|
&\le\|g(z)(z^\alpha-\bar{r}_N(z))\|_{C(S_{\beta})}+|\bar{r}_{N}(z)|\mathcal{O}(e^{-T}),\label{eq:errorforg(z)zalpha}\\
|g(z)z^\alpha\log{z}-P^{(g)}_{N_2}(z)\widetilde{r}_N(z)|
&\le\|g(z)(z^\alpha\log{z}-\widetilde{r}_N(z))\|_{C(S_{\beta})}+|\widetilde{r}_{N}(z)|\mathcal{O}(e^{-T}),\label{eq:errorforg(z)zalphalogz}
\end{align}
which directly leads to the desired results \eqref{eq: rate1} and \eqref{eq: rate2} in Theorem \ref{mainthm}.

In particular, for the special case where $\alpha$ is a positive integer, $g(z)z^\alpha$ can be approximated by a polynomial $P^{(g)}_{N_2}(z)$ of degree $N_2=\mathcal{O}(\sqrt{N_1})$ from Runge's theorem and the discussion in Subsection \ref{sec3-4}. For $z^\alpha\log z$, from the integral representation \eqref{eq:cint_log_gener}, the first term in \eqref{eq:cint_log_gener} vanishes, improving \eqref{eq: rate1} to \eqref{eq: rate2}.

These results collectively establish the proof of Theorem \ref{mainthm}.

\begin{remark}\label{rungeN}
 It is notable that
choosing $N_2=\mathcal{O}(\sqrt{N_1})$ is necessary according to Runge's approximation theorem (see Subsection \ref{sec3-4} for more details). For $\sqrt{N_1}/N_2\sim o(1)$, i.e.,  a larger $N_2$, the LP  theoretically gives the same convergence order as $N_2=\mathcal{O}(\sqrt{N_1})$ for $g(z)z^{\alpha}$ and $g(z)z^{\alpha}\log{z}$, but may create numerical instability. While for $N_2/\sqrt{N_1}\sim o(1)$, i.e., a smaller $N_2$, the LP cannot achieve the desired rate. See {\sc Figure} \ref{LPsLargerN2} for example. Following Herremans, Huybrechs and Trefethen \cite{Herremans2023}, for most cases in this paper the constant in front of $\sqrt{N_1}$ is chosen as $1.3$, that is, $N_2={\rm ceil}(1.3\sqrt{N_1})$.

\begin{figure}[htbp]
\centerline{\includegraphics[width=12.2cm]{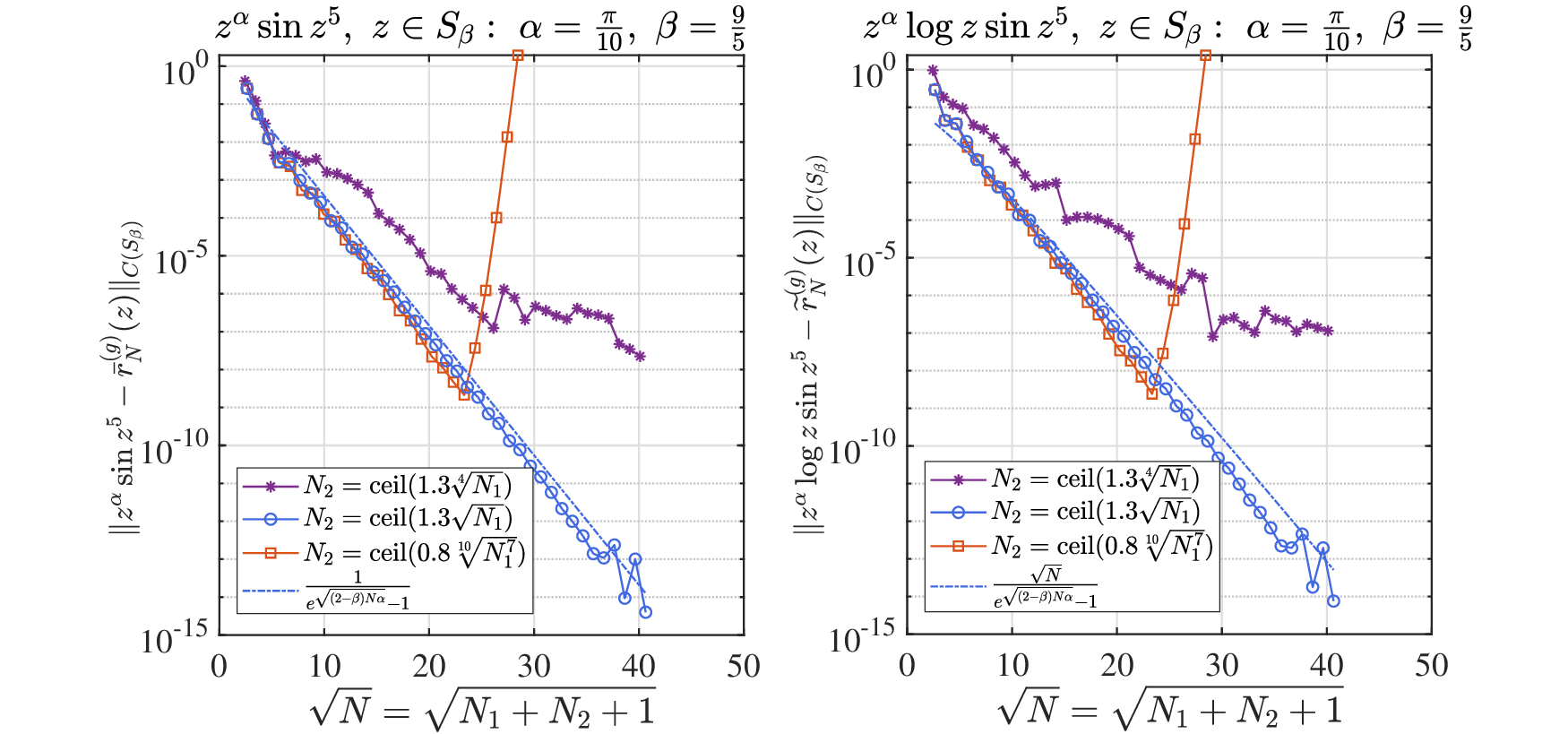}}
\caption{The comparisons of decay rates of errors of LPs for $g(z)z^{\alpha}$ and $g(z)z^{\alpha}\log{z}$ in $S_{\beta}$ with $N_2={\rm ceil}(1.3\sqrt[4]{N_1})$, ${\rm ceil}(1.3\sqrt{N_1})$ and ${\rm ceil}(0.8\sqrt[10]{N_1^7})$, respectively, where $g(z)=\sin{z^5}$.}\label{LPsLargerN2}
\end{figure}
\end{remark}

\begin{remark}
From the proof of Theorem \ref{mainthm}, we see that the convergence rates are  determined by  uniform bounds of the truncated errors $E_T^{(l)}(z)$ and $E_Q^{(l)}(z)$ ($l=0,1$) for $z\in S_\beta$, wherein both $E_T^{(l)}(z)$ and $E_T^{(l)}(z)$ attain their upper bounds at segments $z=xe^{i\frac{\beta\pi}{2}}$  and $z=xe^{-i\frac{\beta\pi}{2}}$ for $x\in (0,1]$ related to $\varkappa(\beta)$ and $B^{\mp\mathrm{sgn}(n)}_{\alpha,\sigma}$, respectively. See \eqref{eq:est1}, \eqref{eq:est2}, \eqref{constantB-} and  the proof of Theorem \ref{Quadratrue_rat_uniform}. Then if the poles \eqref{eq:uniform0} are not clustered on the exterior angle bisector of sector domain $S_{\frac{\beta+\theta}{2}}^{(j)}\subseteq S_{\beta}$ ($0\le \theta\le \beta$ and $j=1,2$), we may symmetrically extend the sector to $S_\beta$ such that the poles on the exterior angle bisector (see 1st-row of {\sc Figure} \ref{LPwithuniformclusterpole_rotated}), Theorem \ref{mainthm} on $S_{\frac{\beta+\theta}{2}}^{(j)}$ still holds 
 for $\sigma=\sigma_{\rm opt}$.
{\sc Figure} \ref{LPwithuniformclusterpole_rotated} demonstrates the optimal convergence rate
$\mathcal{O}\big(e^{-\sqrt{(2-\beta)\alpha}\pi}\big)$ with $\sigma=\sigma_{\rm opt}$.
\end{remark}

\begin{figure}[htbp]
\centerline{\includegraphics[width=16cm]{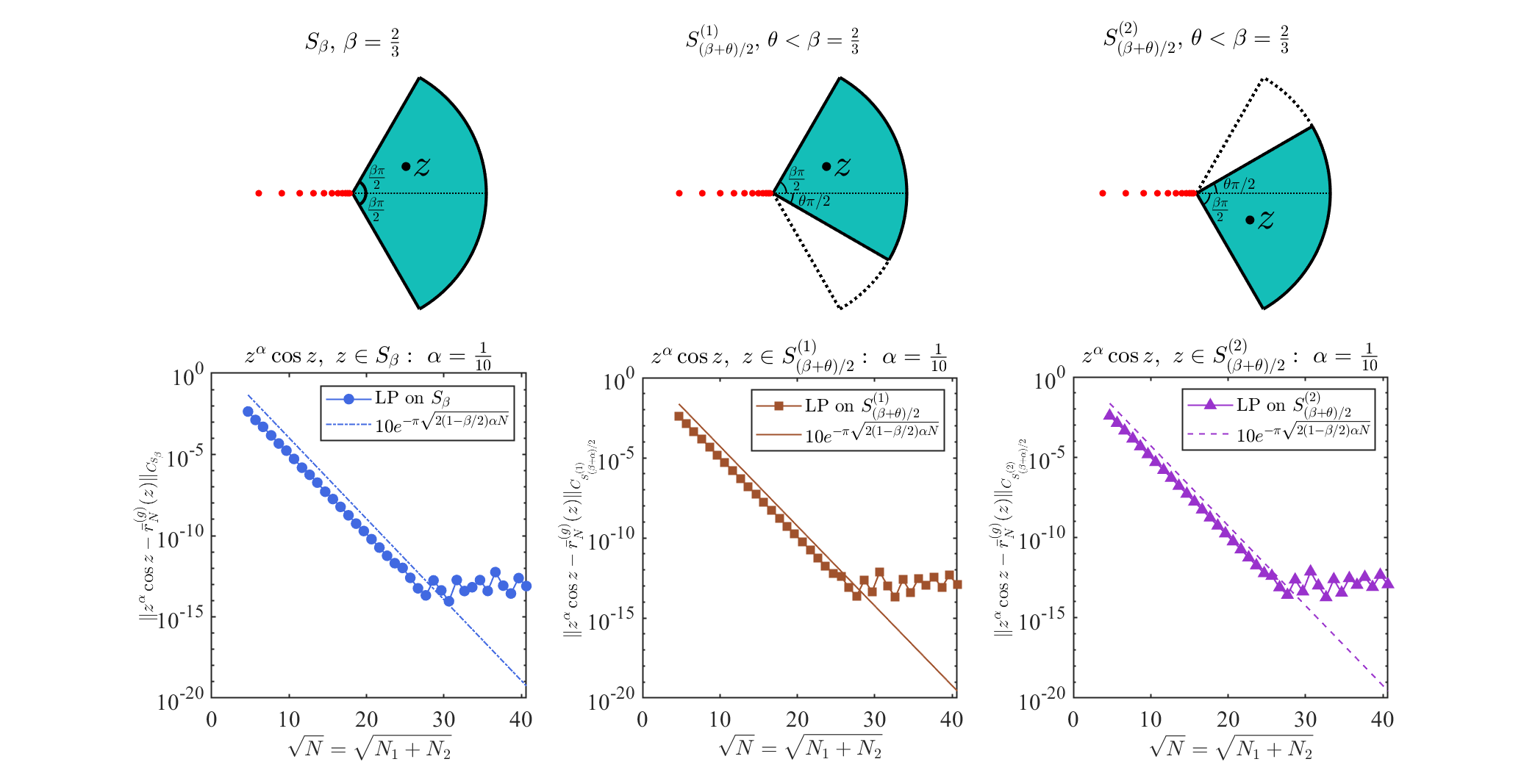}}
\centerline{\includegraphics[width=16cm]{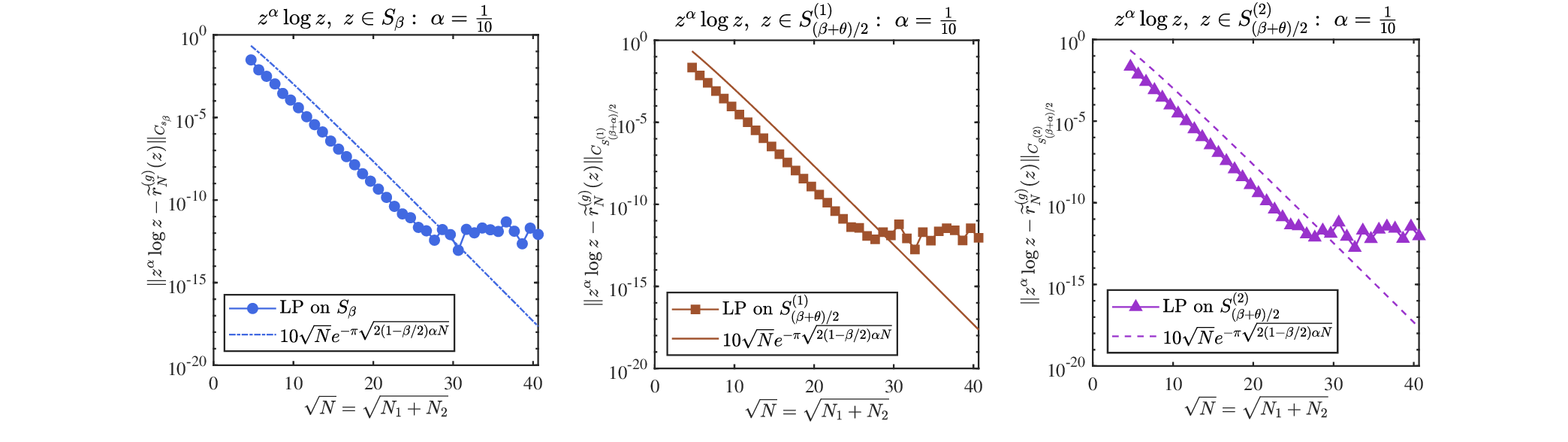}}
\caption{The convergence rates of LPs on the symmetric and asymmetric sector domains. All the cases $S_{\beta}$ and $S^{(l)}_{(\beta+\theta)/2},\ l=1,2$ achieve the theoretically predicted convergence rate \eqref{eq: rate1}.}
\label{LPwithuniformclusterpole_rotated}
\end{figure}

\begin{remark}
It is obvious that Theorem \ref{mainthm} also holds for the general sector domain with arbitrary positive radius $R>0$ and central angle $\beta\pi, \beta\in[0,2)$. 
\end{remark}

\section{Applications to conformal mappings on planar corner
domains }\label{LPapp_cornerdomain}
In the final section, we investigate a particular Laplace boundary value problem on a corner domain, which is closely related to the classical conformal mapping.
These holomorphic mappings serve as pivotal instruments for complex analysis.

Suppose $\Omega$ is a simply connected domain with a Jordan boundary, and $D$ denotes the unit disk. One aim is to seek a conformal mapping $f:\ \Omega\to D$
satisfying $f(z_0)=0$ and $f'(z_0)>0$ for some interior point $z_0\in\Omega$.
Without loss of generality, we assume $z_0=0\in\Omega$. According to Riemann mapping theorem \cite[p. 230, Theorem 1]{AhlforsComplex} and boundary correspondence theorem \cite[p. 233, Theorem 2]{AhlforsComplex}, there exists such a unique conformal mapping $f$ satisfying $f(\partial\Omega)\to\partial D$ and preserving the boundary orientation.
Moreover, the Osgood-Carathodory theorem \cite[p. 346, Theorem 16.3a]{Henricivol3} further guarantees that
$f$ admits a homeomorphic extension to $\partial\Omega$.
Thus, it follows that
\begin{align}\label{eq:g(z)}
g(z)=\log\left[\frac{f(z)}{z}\right]
\end{align}
is holomorphic in $\Omega$, with its real part $u(z)$ satisfying $u(z)=-\log|z|$ on $\partial\Omega$. The imaginary part $v(z)$ of $g(z)$, with $v(0)=0$, can be uniquely determined by the conjugate harmonic function of $u(z),\ z\in\Omega$,  as shown in \cite[pp. 370-371, Theorem 16.5a]{Henricivol3}. Additionally, from
\eqref{eq:g(z)} we have $f(z)=ze^{g(z)},\ z\in\overline\Omega$.
Consequently, the original primary objective reduces to solving the Laplace boundary problem
\begin{align}
\left\{\begin{array}{ll}
\Delta u=0,&z\in\Omega,\\
u(z)=-\log|z|,&z\in\partial\Omega.
\end{array}\right.\label{eq:Laplaceu}
\end{align}

As is introduced in Section \ref{sec:Int},
the solution $u(x,y)$ of the Laplace equation in domain $\Omega$ is the real part of a holomorphic function $f(z)$ \cite[p. 368, Theorem 6.1.2]{Asmar2018}, which
can be asymptotically represented by
\begin{equation}\label{powerseriesform}
g(z)-g(w_k)\sim\left\{\begin{array}{ll}
\sum\limits_{\displaystyle \substack{\iota,\gamma}}c^{(k)}_{\iota,\gamma}(z-w_k)^{\iota+\gamma\alpha_k}\sim (z-w_k)^{\alpha_k},\hspace{4.2cm} \alpha_k \mbox{ irrational}\\
\sum\limits_{\displaystyle \substack{\iota,\gamma,\tau}}c^{(k)}_{\iota,\gamma,\tau}(z-w_k)^{\iota+\gamma\alpha_k}[(z-w_k)^{\mu_k}\log{(z-w_k)}]^\tau\sim (z-w_k)^{\alpha_k},
\ \alpha_k \mbox{ rational}\\
\end{array}\right.
\end{equation}
uniformly in any finite sector  $S_{\beta_k}$ as $z\rightarrow w_k$,
where the terms are arranged in increasing order and $\alpha_k=1/\varphi_k$, and $\alpha_k=\frac{q_k}{\mu_k}$, $(q_k, \mu_k)=1$ if $\varphi_k$ is rational, $k=1,2,\ldots,m$. See \cite[Theorem 1]{Lehman1957DevelopmentOT} and \cite[Theorems 3, 4 and 5]{Wasow} for more details.

Naturally, the function $g(z)$ on  $\Omega$ with isolated branch points at the vertices $w_k,\ k=1,\cdots,m$ may be approximated well by
an LP approximation \eqref{LP_cornerdomain},
\begin{align*}
r_n(z)=\sum_{k=1}^m\sum_{j=0}^{N_{1,k}}\frac{a_{k,j}}{z-p_{k,j}}+\sum_{j=0}^{N_2} b_{j}z^j
\end{align*}
with lightning poles $\{p_{k,j}\}$ that are uniformly exponentially clustered with parameter $\sigma_k=\frac{\sqrt{2-\beta_k}\pi}{\sqrt{\alpha_k}}$ towards every corner $w_k$ of sector domain along the exterior bisector.
Following \cite{TREFETHEN_SERIES}, the coefficients $\{a_{k,j}\}$ and $\{b_j\}$ 
are calculated by seeking the least-squares solution of
\begin{align*}
u(z)\approx\Re(r_n(z))
=&\sum_{k=1}^m\sum_{j=0}^{N_{1,k}}\Big\{\Re(a_{k,j})\Re\left[(z-p_{k,j})^{-1}\right]
-\Im(a_{k,j})\Im\left[(z-p_{k,j})^{-1}\right]\Big\}\notag\\
&+\sum_{j=0}^{N_2} \big[\Re(b_{j})\Re(z^j)-\Im(b_j)\Im(z^j)\big]
\end{align*}
with the boundary condition in \eqref{eq:Laplaceu}, based on Arnoldi  orthogonalization algorithm \cite{BNT2021}.
For further details, see \cite{costa2020solvinglaplaceproblemsaaa,costa2023aaa,Gopal2019,Treweb}.

Consequently, the function $g(z)$ in the exponent of the conformal mapping $f(z)$ can be well approximated by the LP scheme $r_n(z)$ given in \eqref{LP_cornerdomain}, 
 yielding
\begin{align*}
f(z)=ze^{g(z)}\approx ze^{r_n(z)}=:\tilde{f}_n(z).
\end{align*}
In particular, from the power series form \eqref{powerseriesform}, the Gopal-Trefethen canonical decomposition \eqref{decompose_singularity} applied to $g$, and Theorem \ref{mainthm}, each component $g_k(z)=\frac{1}{2\pi i}\int_{\Lambda_k}\frac{g(\zeta)}{\zeta-z}\mathrm{d}\zeta$ satisfies
\[g_k(z)-r_{n_k}(z)=\mathcal{O}(1)\left(e^{-\pi\sqrt{(2-\beta_k)n_k\alpha_k}}\right),\ \ n_k\to\infty\]
and $n_k=N_{1,k}+\mathcal{O}\left(\sqrt{N_{1,k}}\right),\ k=1,\cdots,m$. 

Moreover, the analytic term $\frac{1}{2\pi i}\sum_{k=1}^m\int_{\Gamma_k}\frac{g(\zeta)}{\zeta-z}\mathrm{d}\zeta$
can be approximated by a polynomial of degree $\mathcal{O}\left(\sqrt{N_{1,k}}\right)$ with error
$$\mathcal{O}(1)\max_{1\le k\le m}\left\{e^{-\pi\sqrt{(2-\beta_k)n_k\alpha_k}}\right\}.$$ 
Combining these estimates yields a rational function $r_n$
such that
\[g(z)-r_n(z)=\mathcal{O}(1)\max_{1\le k\le m}\left\{e^{-\pi\sqrt{(2-\beta_k)n_k\alpha_k}}\right\},\ \ n_k\to\infty.\]
Therefore, the convergence rate of $\tilde{f}_n(z)$
is given by 
\begin{align*}
\left|f(z)-\tilde f_n(z)\right|=&\left|ze^{g(z)}\right|\left|1-e^{r_n(z)-g(z)}\right|
=\left|ze^{g(z)}\right|\left|1-e^{\mathcal{O}(1)\max_{1\le k\le m}\left\{e^{-\pi\sqrt{(2-\beta_k)n_k\alpha_k}}\right\}}\right|\\
=&\mathcal{O}(1)\max_{1\le k\le m}\left\{e^{-\pi\sqrt{(2-\beta_k)n_k\alpha_k}}\right\},\ n_k\to\infty.
\end{align*}

\begin{figure}[hb!]
\centerline{\includegraphics[width=12cm]{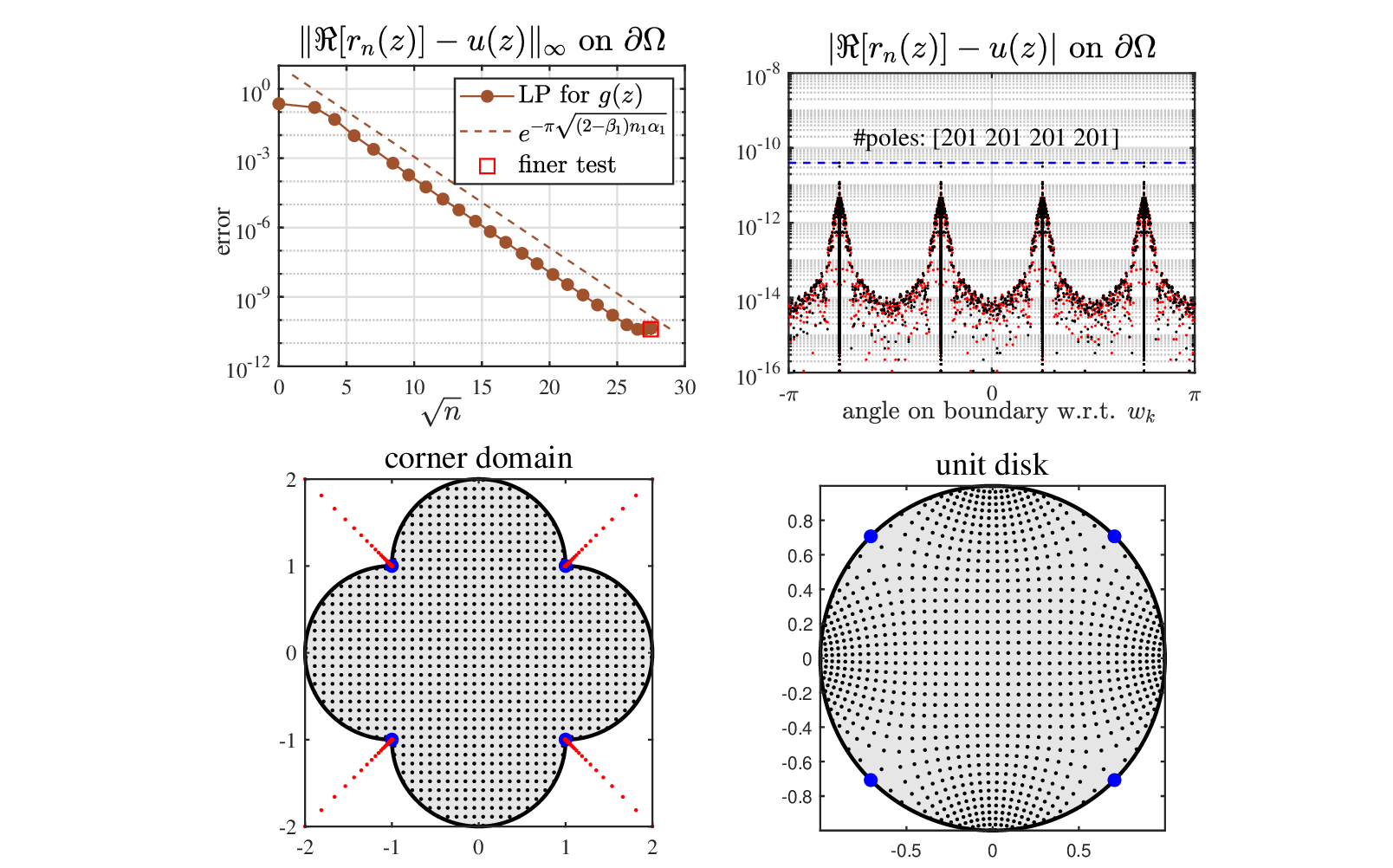}}
\caption{Error decay rate (upper left) for LP numerical solutions $\Re[r_n(z)]$s of \eqref{eq:Laplaceu} on curvy square domain with vertices $[w_1,w_2,w_3,w_4]=[-1,1,1,-1]+i[-1,-1,1,1]$, where $n=4n_1$.
The pointwise errors on $\partial D$ (upper right)
for the last $\Re[r_n(z)]$ (we use $201$ poles for each vertex) is sketched in the upper left subplot, whose abscissa indicates the rotation angle of the boundary points of $\Omega$
about its geometric center.
In the second row, the map $\widetilde f_n(z)$ conformally maps the equal spaced interior points (black) of $\Omega$ to the deformed interior points (black) of $D$, and the four corners are mapped onto the four blue points on the boundary of $D$.}
\label{ConformalCS}
\end{figure}

\begin{figure}[h!]
\centerline{\includegraphics[width=12cm]{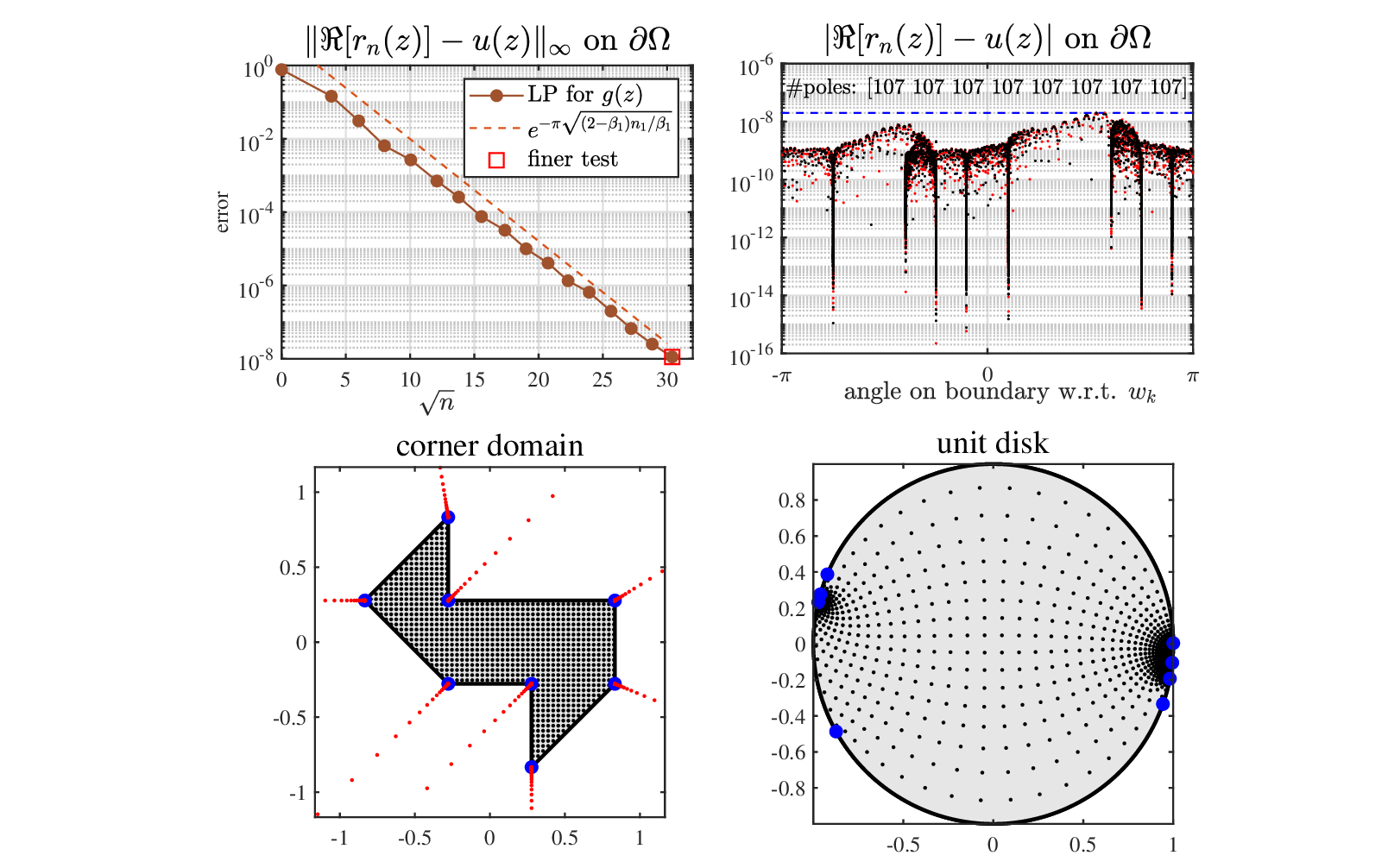}}
\caption{Error decay rate (upper left) for LP solutions $\Re[r_n(z)]$s of \eqref{eq:Laplaceu} on corner domain $\Omega$ with vertices $\frac{5}{18}\big[(-1,-1,-3,-1,1,1,3,3)+i(1,3,1,-1,-1,-3,-1,1)\big]$, where $n=8n_1$. The pointwise errors of the final $\Re[r_n(z)]$ (we use $107$ poles for each vertex) are shown in the upper-right subplot. In the second row, the conformal map $\tilde f_n(z)$ transforms equally-spaced interior points (black) of
$\Omega$ into deformed interior points (black) of $D$, while mapping the eight corners to eight blue points on $\partial D$.}
\label{conformal_complexDomain}
\end{figure}

For the numerical method for conformal map based on the LP scheme, we illustrate its superior performance by two examples on the polygon or curvy quadrilateral domains $\Omega$, see {\sc Figures} \ref{ConformalCS} and \ref{conformal_complexDomain}.
The presented {\sc Figures}  \ref{ConformalCS} and \ref{conformal_complexDomain}  exclusively illustrate the decay rate of $|\Re[r_n(z)]-u(z)|_{\infty}$ rather than $|\tilde f_n(z)-f(z)|$. This selective representation stems from a fundamental constraint: the true solution $f(z)$ for the conformal mapping from domain $\Omega$ to $D$ remains analytically unavailable, precluding direct error computation of the approximate mapping function $\widetilde f_n(z)$.
Following the methodology outlined in  \cite{Gopal2019}, the terminal red square ``{\color{red}$\square$}'' on the convergence curve denotes the boundary error measurement benchmarked against a refined computational grid-specifically, a mesh with double the resolution of that employed in the original least-squares discretization.
The pointwise errors in numerical experiments are plotted for $\Re[r_n(z)]$ that reaches the allowed largest degree, with the black and red points representing the errors on the original and finer test sample points (black and red, respectively).

\section*{Acknowledgement}
This work was supported by the National Natural Science Foundation of China (No. 12271528) and Hunan Basic Science Research Center for Mathematical Analysis (2024JC2002).

\bibliographystyle{plain}

\end{document}